\newtheorem{theorem}{Theorem}
\newtheorem{lemma}[theorem]{Lemma}
\newtheorem{proposition}[theorem]{Proposition}
\begin{document}

\title[$b$-chromatic number of subdivision-vertex neighbourhood coronas]{Determining the $b$-chromatic number of subdivision-vertex neighbourhood coronas}

\author{%
  Ra\'ul M. Falc\'on$^{1,*}$,
  M. Venkatachalam $^2$
  and
 S. Julie Margaret$^2$
}

\address{$^1$ Dept. Applied Mathematics I, Universidad de Sevilla, Spain.\\
$^2$ Dept. Mathematics, Kongunadu Arts and Science College, India.}
\email{$^*$ rafalgan@us.es}

\begin{abstract}
Let $G$ and $H$ be two graphs, each one of them being a path, a cycle or a star. In this paper, we determine  the $b$-chromatic number of every  subdivision-vertex neighbourhood corona $G\boxdot H$ or $G\boxdot K_n$, where $K_n$ is the complete graph of order $n$. It is also established for those graphs $K_n\boxdot G$ having $m$-degree not greater than $n+2$. All the proofs are accompanied by illustrative examples.

\smallskip
\noindent \textbf{Keywords:}  b-chromatic number; subdivision-vertex neighbourhood corona; path; cycle; star; complete graph.

\smallskip
\noindent \bf{Mathematics Subject Classification:} 05C15.
\end{abstract}

\maketitle

\section{Introduction}

In 1999, Irving and Manlove \cite{Irving1999} introduced the {\em $b$-chromatic coloring} of a graph $G=(V(G),E(G))$ as a proper $k$-coloring $c:V(G)\rightarrow \{0,\ldots,k-1\}$ with a {\em $b$-vertex} for each color $i$. That is, a vertex $v\in V(G)$ such that $c(v)=i$ and, for each color $j\neq i$, there exists a vertex $w\in N_G(v)$ satisfying that $c(w)=j$. Here, $N_G(v)$ denotes the neighborhood of the vertex $v$. The {\em $b$-chromatic number} $\varphi(G)$ is the maximum positive integer $k$ for which a $b$-chromatic coloring of $G$ with $k$ colors exists. Any such a $b$-chromatic coloring is said to be {\em optimal}. Irving and Manlove proved that the problem of determining the $b$-chromatic number of a graph is NP-hard in general, and polynomial-time solvable for trees. It has been dealt with by a wide amount of graph theorists (see \cite{Jakovac2018} for a survey). Of particular interest for the aim of this paper, it is remarkable the study of the $b$-chromatic number of distinct graph products as the Cartesian product \cite{Kouider2002, Kouider2007, Maffray2013, Balakrishnan2014, Balakrishnan2016, Balakrishnan2017, Javadi2012, Kerdjoudj2017, Guo2018, Afrose2020}, the direct product \cite{Jakovac2012, Koch2015}, the strong product \cite{Jakovac2012, Raj2017}, the lexicographic product \cite{Jakovac2012, Raj2017, Linhares2017}, the corona product \cite{Vivin2012} or the subdivision edge and vertex corona \cite{Pathinathan2014}.

This paper delves into this topic for the subdivision-vertex neighborhood corona (from here on, SVN corona) of paths, cycles, stars and complete graphs. Recall here that the {\em subdivision graph} $S(G)$ of a graph $G$ arises from inserting a new vertex into every edge of $G$. In 2013, Liu and Lu \cite{Liu2013} introduced the {\em SVN corona} $G \boxdot H$ of two graphs $G$ and $H$, with $V(G)=\{u_0,\ldots,u_{n-1}\}$, as the graph arising from adding $n$ vertex-disjoint copies of $H$ to $S(G)$, so that every vertex in $N_{S(G)}(u_i)$ is joined to every vertex in the $(i+1)^{\mathrm{th}}$ copy of $H$, for all $i<n$. 

The paper is organized as follows. In Section~\ref{sec:preliminaries}, we describe some preliminary concepts and results on Graph Theory that are used throughout the manuscript. Then, Sections \ref{sec:path}--\ref{sec:complete} deal separately with the $b$-chromatic number of SVN coronas of paths, cycles, stars and complete graphs.

\section{Preliminaries} \label{sec:preliminaries}

All the graphs throughout this paper are finite and simple. This section deals with some notations and preliminary results on graph theory that are used throughout the paper. 

Let $G=(V(G),E(G))$ be a graph. The neighborhood and degree of a vertex $v\in V(G)$ are respectively denoted by $N_G(v)$ and $d_G(v)$. If there is no risk of confusion, then we use the respective notations $N(v)$ and $d(v)$. In addition, $\Delta(G)$ denotes the maximum vertex degree of the graph $G$. Further, the path, cycle and star of order $n>2$ are respectively denoted by $P_n$, $C_n$, and $S_{n-1}$. The complete graph of order $n$ is denoted by $K_n$. 

A {\em proper $k$-coloring} of a graph $G$ is any map $c:V(G)\rightarrow \{0,\ldots,k-1\}$ assigning $k$ {\em colors} to the set of vertices $V(G)$ so that no two adjacent vertices share the same color. The {\em chromatic number} $\chi(G)$ is the minimum positive integer $k$ for which a proper $k$-coloring of $G$ exists. An example of proper $k$-coloring is the $b$-chromatic coloring with $k$ colors that has been described in the introductory section.

\begin{lemma}\label{lemma0}\cite{Irving1999} Let $G$ be a graph. Then, $\chi(G) \leq \varphi(G) \leq \Delta (G) + 1$.
\end{lemma}

\begin{proposition}\label{proposition_Kouider}\cite{Kouider2002} Let $n$ be a positive integer. Then,
\begin{itemize}
    \item $\varphi(P_n)=\begin{cases}
    \begin{array}{ll}
    2, & \text{ if } n\in\{3,4\},\\
    3, & \text{ if } n>4.
    \end{array}
    \end{cases}$
    \item $\varphi(C_n)=\begin{cases}
    \begin{array}{ll}
    2, & \text{ if } n=4,\\
    3, & \text{ if either } n=3 \text{ or } n>4.
    \end{array}
    \end{cases}$
    \item $\varphi(S_n)=2$, for all $n>2$.
    \item $\varphi(K_n)=n$.
\end{itemize}
\end{proposition}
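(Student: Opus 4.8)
The plan is to handle the four families separately, in each case sandwiching $\varphi$ between the lower bound $\chi$ and the upper bound $\Delta+1$ provided by Lemma~\ref{lemma0}, and then closing the remaining gap by a short counting argument or an explicit construction. Two of the cases are essentially immediate. For $K_n$ we have $\chi(K_n)=n=\Delta(K_n)+1$, so Lemma~\ref{lemma0} forces $\varphi(K_n)=n$ (concretely, in a proper $n$-colouring every vertex sees all other $n-1$ colours and is a $b$-vertex). For a star $S_n$ with $n>2$, the bound $\varphi(S_n)\geq\chi(S_n)=2$ is clear, and for the reverse inequality I would note that a $b$-colouring with $k\geq 3$ colours is impossible: the centre receives one colour and the leaves the rest, but a leaf has a unique neighbour and hence sees at most one colour besides its own, so it cannot be a $b$-vertex for any $k-1\geq 2$; since a $b$-colouring with $k\geq 2$ colours requires a $b$-vertex of some leaf colour, $k\leq 2$.

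For paths and cycles, $\Delta=2$ gives $\varphi\leq 3$, while $\varphi\geq\chi\geq 2$ always and $\varphi(C_n)\geq\chi(C_n)=3$ when $n$ is odd (in particular $\varphi(C_3)=3$; alternatively $C_3=K_3$). So the whole content is to decide when the value $3$ is attained. The negative direction is a counting argument: in a $3$-colouring a $b$-vertex must have degree $2$ with its two neighbours carrying the two remaining colours; $P_3$ has only one vertex of degree $2$, $P_4$ has only two, and in $C_4$ every proper $3$-colouring puts one colour on an antipodal pair and the other two colours once each, so the two singleton-coloured vertices see only the repeated colour. In each of these cases three $b$-vertices cannot coexist, whence $\varphi(P_3)=\varphi(P_4)=\varphi(C_4)=2$.

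For the positive direction (paths and cycles with $n>4$) I would exhibit an explicit $3$-colouring with $b$-vertices of all three colours. On a path $v_0\ldots v_{n-1}$ I would take $c(v_0)=2$, $c(v_1)=1$, $c(v_2)=3$, $c(v_3)=2$, $c(v_4)=1$ and then continue with the alternating pattern $2,1,2,1,\dots$; here $v_1$, $v_2$, $v_3$ are $b$-vertices for colours $1$, $3$, $2$ respectively. On a cycle I would use the same seed $1,2,3$ at $v_0,v_1,v_2$, fill the remaining vertices with an alternating $1,2$-block, and patch the last one or two vertices near the wrap-around (inserting the colour $3$ when parity demands it) so that the colouring stays proper while $v_1$, $v_2$ and the first vertex of the block remain $b$-vertices for $2$, $3$, $1$. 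Properness and the three $b$-vertex conditions are then a finite check for $P_5,P_6,C_5,C_6$ and an immediate consequence of the periodic structure for larger $n$.

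I expect the only delicate point to be this wrap-around bookkeeping for short cycles of each parity, where the alternating block must be truncated and mended so as to be simultaneously proper and $b$-vertex-preserving; everything else reduces to one-line invocations of Lemma~\ref{lemma0} or to small finite checks.
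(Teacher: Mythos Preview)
Your argument is correct. Note, however, that the paper does not supply its own proof of this proposition: it is quoted as a known result from Kouider and Mah\'eo \cite{Kouider2002}, so there is no in-paper proof to compare against. What you have written is a self-contained verification of that cited fact, and each of the four cases is handled soundly---the sandwich $\chi\leq\varphi\leq\Delta+1$ from Lemma~\ref{lemma0} disposes of $K_n$ immediately, the degree-counting arguments for $S_n$, $P_3$, $P_4$ and the colour-distribution analysis for $C_4$ are accurate, and the explicit $3$-colouring of $P_n$ with $b$-vertices at positions $1,2,3$ works as stated. The only place that is written informally is the cycle construction for $n>4$: your ``seed $1,2,3$ then alternate and patch the wrap-around'' is correct in spirit, and you can make it fully explicit by taking $c(v_0)=1$, $c(v_1)=2$, $c(v_2)=3$, then $c(v_i)\in\{1,2\}$ alternating for $3\leq i\leq n-2$, and finally choosing $c(v_{n-1})\in\{2,3\}\setminus\{c(v_{n-2})\}$; the vertices $v_1,v_2,v_3$ are then $b$-vertices for colours $2,3,1$ regardless of parity, and properness at the closing edge $v_{n-1}v_0$ holds by construction.
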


\vspace{0.2cm}

In order to study the $b$-chromatic number of any graph, Irving and Manlove defined the {\em $m$-degree} of a graph $G$ of order $n$ as 
\[m(G):=\left|\{i\in\{1,\ldots,n\}\colon\,d(v_{i-1})\geq i-1\}\right|,\]
where $V(G)=\{v_0,\ldots,v_{n-1}\}$ is such that $d(v_0)\geq \ldots\geq d(v_{n-1})$.

\begin{lemma}\label{lemma1}\cite{Irving1999} Let $G$ be a graph. Then, $\varphi(G) \leq m(G)$.
\end{lemma}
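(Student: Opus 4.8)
The plan is to argue directly from the definition of a $b$-chromatic coloring together with the way the $m$-degree orders vertices by decreasing degree. Fix any $b$-chromatic coloring $c$ of $G$ with $k$ colors; I will show that $k\leq m(G)$, and the lemma then follows by taking $k=\varphi(G)$.

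First I would record the structural consequence of having a $b$-vertex for every color. For each color $i\in\{0,\ldots,k-1\}$ pick a $b$-vertex $v_i$ with $c(v_i)=i$. By definition, $v_i$ has, for every color $j\neq i$, a neighbor colored $j$; these $k-1$ neighbors are pairwise distinct since a proper coloring gives them distinct colors, so $d(v_i)\geq k-1$. The vertices $v_0,\ldots,v_{k-1}$ are themselves pairwise distinct (they carry distinct colors), hence $G$ contains at least $k$ vertices of degree at least $k-1$.

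Next I would translate this into the sorted ordering $V(G)=\{u_0,\ldots,u_{n-1}\}$ with $d(u_0)\geq\cdots\geq d(u_{n-1})$ used in the definition of $m(G)$. Since at least $k$ vertices have degree $\geq k-1$, the $k$-th largest degree satisfies $d(u_{k-1})\geq k-1$. Consequently, for every index $i$ with $1\leq i\leq k$ we have $d(u_{i-1})\geq d(u_{k-1})\geq k-1\geq i-1$, so each such $i$ lies in the set $\{i\in\{1,\ldots,n\}\colon d(u_{i-1})\geq i-1\}$ whose cardinality is $m(G)$. Hence $m(G)\geq k$.

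There is no serious obstacle here; the only point deserving a moment's care is the counting step, namely that $k$ vertices of degree $\geq k-1$ force the $k$-th largest degree to be $\geq k-1$, which is immediate from the monotonicity of the sorted degree sequence. Assembling the pieces, every $b$-chromatic coloring of $G$ uses at most $m(G)$ colors, and therefore $\varphi(G)\leq m(G)$.
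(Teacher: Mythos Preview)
Your proof is correct and is precisely the standard argument originating in the cited work of Irving and Manlove. Note that the paper itself does not give a proof of this lemma: it is quoted from \cite{Irving1999} without argument, so there is no in-paper proof to compare against; your write-up faithfully reproduces the original reasoning.
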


\vspace{0.2cm}

We finish this preliminary section by introducing some notation concerning any SVN corona $G\boxdot H$. Here, we assume that $V(G)=\{u_0,\ldots,u_{|V(G)|-1}\}$ and  $V(H)=\{v_0,\ldots,v_{|V(H)|-1}\}$. In addition, let $I(G)$ denote the set of vertices that are inserted into the edges of the graph $G$ to get the subdivision graph $S(G)$. Then, we use the following notation throughout the paper.
\begin{itemize}
\item $s_{i,j}$ denotes the vertex in $I(G)$ that is inserted in an edge $u_iu_j\in E(G)$. Depending on convenience, we may also denote this vertex by $s_{j,i}$.

\item $v_{i,j}$ denotes the copy of each vertex $v_j\in V(H)$ in the $(i+1)^{\mathrm{th}}$ copy of the graph $H$.
\end{itemize}
In the constructive proofs of the paper, all the indices of the just described vertices are considered to be modulo either $|V(G)|$ or $|V(H)|$ (depending on the case). Furthermore, concerning the graphical representation of the SVN corona $G\boxdot H$, edges between $S(G)$ and one of the copies of $H$ are drawn with dashed lines. Figure \ref{Fig_Corona} illustrates these notations.

\begin{figure}[htbp]
\centering
\includegraphics[scale=0.15]{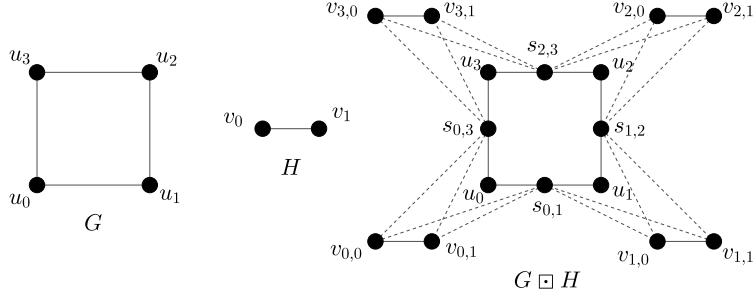}
\caption{SVN corona product.}\label{Fig_Corona}
\end{figure}

In particular,
{\small \begin{equation}\label{eq_d}
d_{G\boxdot H}(v)=\begin{cases}
\begin{array}{ll}
d_G(v),& \text{ if } v\in V(G),\\
2\cdot |V(H)| +2, & \text{ if } v\in I(G),\\
d_G(u_i) + d_H(v_j), & \text{ if } v=v_{i,j},\text{ for some } \begin{cases}
0\leq i<|V(G)|,\\
0\leq j<|V(H)|.
\end{cases}
\end{array}
\end{cases}
\end{equation}}

\begin{proposition}\label{prop_2t4} The following statements hold.
\begin{enumerate}
\item[(a)] $\Delta(G\boxdot H)=\max\{2\cdot |V(H)|+2,\,\Delta(G)+\Delta(H)\}$.

\vspace{0.2cm}

\item[(b)] If $\Delta(G)\leq |V(H)|+3$, then: 

\vspace{0.2cm}

\begin{enumerate}
    \item[(b.1)] $\varphi(G\boxdot H)\leq 2\cdot|V(H)|+3$.

    \vspace{0.2cm}

    \item[(b.2)] If $\Delta(G)+\Delta(H)+1\leq |I(G)|\leq 2\cdot|V(H)|+2$, then $\varphi(G\boxdot H)\leq |I(G)|$.
\end{enumerate}
\end{enumerate}
\end{proposition}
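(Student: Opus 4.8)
The plan is to prove the three bounds by combining the degree formula~\eqref{eq_d} with the two general upper bounds already available, namely $\varphi(\,\cdot\,)\le\Delta(\,\cdot\,)+1$ (Lemma~\ref{lemma0}) and $\varphi(\,\cdot\,)\le m(\,\cdot\,)$ (Lemma~\ref{lemma1}). Part~(a) is a direct computation: by~\eqref{eq_d}, the vertices of $G\boxdot H$ split into three types, the copies $u_i$ of $V(G)$ (degree $d_G(u_i)\le\Delta(G)$), the subdivision vertices in $I(G)$ (each of degree exactly $2\cdot|V(H)|+2$), and the corona vertices $v_{i,j}$ (degree $d_G(u_i)+d_H(v_j)\le\Delta(G)+\Delta(H)$). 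Taking the maximum over all three types, and noting that $\Delta(G)\le\Delta(G)+\Delta(H)$ so the first type never dominates, yields $\Delta(G\boxdot H)=\max\{2\cdot|V(H)|+2,\ \Delta(G)+\Delta(H)\}$.

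For part~(b.1), I would assume $\Delta(G)\le|V(H)|+3$ and bound $\Delta(G\boxdot H)$ from above via part~(a). The term $2\cdot|V(H)|+2$ is already $\le 2\cdot|V(H)|+2$, and for the other term, the hypothesis on $\Delta(G)$ combined with $\Delta(H)\le|V(H)|-1$ (since $H$ is a simple graph on $|V(H)|$ vertices) gives $\Delta(G)+\Delta(H)\le (|V(H)|+3)+(|V(H)|-1)=2\cdot|V(H)|+2$. Hence $\Delta(G\boxdot H)\le 2\cdot|V(H)|+2$, and Lemma~\ref{lemma0} gives $\varphi(G\boxdot H)\le\Delta(G\boxdot H)+1\le 2\cdot|V(H)|+3$, as claimed.

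For part~(b.2), the plan is to invoke Lemma~\ref{lemma1} and show $m(G\boxdot H)\le|I(G)|$ under the stated hypotheses. List the vertices of $G\boxdot H$ in non-increasing order of degree; by~\eqref{eq_d} and the assumption $\Delta(G)+\Delta(H)+1\le|I(G)|$, every corona vertex $v_{i,j}$ and every original vertex $u_i$ has degree at most $\Delta(G)+\Delta(H)\le|I(G)|-1$, while the $|I(G)|$ subdivision vertices each have degree $2\cdot|V(H)|+2\ge|I(G)|$ (using the other half of the hypothesis). So in the sorted sequence $v_0,v_1,\ldots$ the first $|I(G)|$ entries are exactly the subdivision vertices, and the vertex in position $|I(G)|+1$ (namely $v_{|I(G)|}$, one of the corona or original vertices) has degree at most $\Delta(G)+\Delta(H)\le|I(G)|-1<|I(G)|$; thus the index $i=|I(G)|+1$ fails the condition $d(v_{i-1})\ge i-1$, and no larger index can satisfy it either, forcing $m(G\boxdot H)\le|I(G)|$. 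Combining with Lemma~\ref{lemma1} completes the proof.

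The only mildly delicate point is the bookkeeping in part~(b.2): one must be careful that the subdivision vertices really do occupy precisely the top $|I(G)|$ slots of the sorted degree sequence and that no corona vertex sneaks in with an equal or larger degree. This is exactly what the two-sided hypothesis $\Delta(G)+\Delta(H)+1\le|I(G)|\le 2\cdot|V(H)|+2$ is designed to guarantee — the left inequality separates the corona/original vertices strictly below $|I(G)|$, and the right inequality keeps the subdivision vertices at or above $|I(G)|$ — so once this is spelled out the rest is routine.
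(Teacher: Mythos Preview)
Your proof is correct and follows essentially the same route as the paper: part~(a) is read off directly from the degree formula~\eqref{eq_d}; part~(b.1) combines $\Delta(H)\le|V(H)|-1$ with the hypothesis to force $\Delta(G\boxdot H)=2\,|V(H)|+2$ and then invokes Lemma~\ref{lemma0}; and part~(b.2) uses the two-sided hypothesis to pin down $m(G\boxdot H)=|I(G)|$ (your argument in fact gives equality, not just $\le$, since the first $|I(G)|$ sorted vertices all satisfy $d(v_{i-1})\ge i-1$) before applying Lemma~\ref{lemma1}. The only difference is that you spell out the degree-sequence bookkeeping more explicitly than the paper does.
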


\begin{proof} The first statement follows readily from (\ref{eq_d}). Thus, in what follows, we assume that $\Delta(G)\leq |V(H)|+3$. Since $\Delta(H)\leq |V(H)|-1$, we have that $\Delta(G\boxdot H)=2\cdot|V(H)|+2$, whenever $\Delta(G)\leq |V(H)|+3$. Hence, (b.1) follows from Lemma \ref{lemma0}. Furthermore, there are $|I(G)|$ vertices in $G\boxdot H$ having maximum degree $2t+2$. From (\ref{eq_d}), the highest vertex degree in $G\boxdot H$ being less than this maximum is $\Delta(G)+\Delta(H)$. Thus, the assumptions of (b.2) imply that $m(G\boxdot H)=|I(G)|$. Hence, the last statement follows from Lemma \ref{lemma1}.
\end{proof}

Finally, in order to make easier the identification of $b$-vertices in the constructive proofs described in the next four sections, we define a {\em $b$-rainbow set} of a graph to be any set formed by exactly one $b$-vertex of each one of the colors associated to an optimal $b$-chromatic coloring of the graph under consideration. In the illustrative figures of this manuscript, vertices of a particular $b$-rainbow set are represented by crosses $\mathbf{\times}$; other $b$-vertices are represented by triangles $\blacktriangle$ and the remaining vertices are represented by circles $\bullet$.

\section{SVN corona of paths}\label{sec:path}

From here on, let $\mathcal{G}$ denote the set of paths, cycles, stars and complete graphs of any order. In this section, we determine the $b$-chromatic number of the SVN corona $P_n\boxdot G$ of a path $P_n=\langle\,u_0,\ldots,u_{n-1}\,\rangle$, with $n>2$, and a graph $G\in\mathcal{G}$. As a preliminary result, Proposition \ref{prop_2t4} enables us to study this number for the SVN corona $P_n\boxdot H$, for any arbitrary graph $H$ such that $n\geq \Delta(H)+4$.

\begin{proposition}\label{proposition_2t4} The following statements hold.
\begin{enumerate}
\item[(a)] $\varphi(P_n\boxdot H)\leq n-1$, whenever $\Delta(H)+4\leq n\leq 2\cdot |V(H)|+3$.

\vspace{0.2cm}

\item[(b)] $\varphi(P_n\boxdot H)=2\cdot |V(H)|+3$, whenever $n>2\cdot |V(H)|+3$.
\end{enumerate}
\end{proposition}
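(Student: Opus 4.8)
The plan is to handle the two parts separately, since part (a) is an upper bound that should follow almost immediately from the machinery in Proposition~\ref{prop_2t4}, while part (b) requires both an upper bound and an explicit construction.

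For part (a), I would first check the hypotheses of Proposition~\ref{prop_2t4}(b.2) with $G=P_n$ and $H=H$. We have $\Delta(P_n)=2\leq |V(H)|+3$ trivially, so (b.1)--(b.2) are available. The subdivision graph $S(P_n)$ inserts one vertex into each of the $n-1$ edges of $P_n$, so $|I(P_n)|=n-1$. The hypothesis $\Delta(H)+4\leq n$ rearranges to $\Delta(P_n)+\Delta(H)+1 = \Delta(H)+3 \leq n-1 = |I(P_n)|$, and the hypothesis $n\leq 2|V(H)|+3$ rearranges to $|I(P_n)| = n-1 \leq 2|V(H)|+2$. Thus both inequalities needed for (b.2) hold, and we conclude $\varphi(P_n\boxdot H)\leq |I(P_n)| = n-1$, which is exactly statement (a).

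For part (b), assume $n > 2|V(H)|+3$, i.e. $n-1 \geq 2|V(H)|+3$. The upper bound $\varphi(P_n\boxdot H)\leq 2|V(H)|+3$ is immediate from Proposition~\ref{prop_2t4}(b.1), again using $\Delta(P_n)=2\leq |V(H)|+3$. So the real content is the matching lower bound: exhibiting a $b$-chromatic coloring of $P_n\boxdot H$ with exactly $k:=2|V(H)|+3$ colors, together with a $b$-rainbow set of $k$ $b$-vertices. The natural candidates for $b$-vertices are the inserted vertices $s_{i,i+1}\in I(P_n)$, each of which has degree $2|V(H)|+2 = k-1$ by \eqref{eq_d}, hence exactly the right number of neighbours to see all other $k-1$ colors. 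A given $s_{i,i+1}$ is adjacent to $u_i$, $u_{i+1}$, and to all $2|V(H)|$ vertices in the copies of $H$ indexed by $i$ and $i+1$ (namely all $v_{i,\cdot}$ and all $v_{i+1,\cdot}$, by the SVN construction applied to $N_{S(P_n)}(u_i)\ni s_{i,i+1}$ and $N_{S(P_n)}(u_{i+1})\ni s_{i,i+1}$ — one must be slightly careful here about which copy of $H$ attaches to which subdivision vertex). The strategy is to colour each such copy of $H$ with a block of $|V(H)|$ distinct colours (possible since a proper colouring is only constrained within the copy, and $|V(H)|$ colours suffice to properly colour any graph on $|V(H)|$ vertices — even the identity colouring works), arranging the blocks so that the two copies flanking $s_{i,i+1}$ together use $2|V(H)|$ distinct colours, and then colour $u_i$, $u_{i+1}$ with the two remaining colours, making $s_{i,i+1}$ a $b$-vertex. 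Since $n$ is large, we have enough inserted vertices $s_{i,i+1}$ to realise all $k$ colours on a $b$-rainbow set by letting the colour-blocks "rotate" along the path as $i$ increases, picking for each colour an index $i$ where that colour lands on $s_{i,i+1}$.

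The main obstacle will be the bookkeeping in the construction: one must simultaneously (i) keep the colouring proper everywhere — in particular at the path vertices $u_i$ and the inserted vertices $s_{i,i+1}$, which interact with two copies of $H$ each; (ii) ensure that for \emph{each} of the $k$ colours there is at least one inserted vertex serving as its $b$-vertex with all $k-1$ other colours present in its neighbourhood; and (iii) verify this works at the two ends of the path, where $u_0$ and $u_{n-1}$ have degree $1$ and the boundary copies of $H$ attach to only one inserted vertex, so the end inserted vertices may need a separate (slightly ad hoc) treatment. Since $n-1\geq 2|V(H)|+3 = k$, there is enough room to place a full $b$-rainbow set strictly in the interior and absorb the boundary irregularities. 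I would organise the write-up as: (1) define the colouring explicitly by a periodic/rotating rule on blocks of consecutive path-segments; (2) verify properness case by case on $V(P_n)$, $I(P_n)$, and each copy of $H$; (3) identify for each colour $i\in\{0,\ldots,k-1\}$ a specific inserted vertex that is a $b$-vertex of colour $i$, thereby producing the $b$-rainbow set; (4) conclude $\varphi(P_n\boxdot H)\geq k$, which with the upper bound gives equality. An illustrative figure with the $\times$, $\blacktriangle$, $\bullet$ conventions of the preliminaries would accompany the argument.
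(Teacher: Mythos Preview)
Your proposal is correct and follows essentially the same route as the paper: part~(a) is derived exactly as you outline from Proposition~\ref{prop_2t4}(b.2) via $\Delta(P_n)=2$ and $|I(P_n)|=n-1$, and for part~(b) the paper, like you, takes the upper bound from Proposition~\ref{prop_2t4}(b.1) and builds a $b$-chromatic $(2|V(H)|+3)$-coloring whose $b$-rainbow set is $\{s_{0,1},\ldots,s_{2|V(H)|+2,\,2|V(H)|+3}\}$. The only difference is presentational: the paper writes down a single explicit modular formula (namely $c(u_i)=(i+1)\bmod k$, $c(s_{j,j+1})=j\bmod k$, $c(v_{i,\ell})=(i+2\ell+3)\bmod k$ with $k=2|V(H)|+3$) that realises your ``rotating blocks'' idea uniformly and makes the boundary worries you anticipate disappear.
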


\begin{proof} The first statement follows readily from (b.2) in Proposition \ref{prop_2t4} once it is observed that $\Delta(P_n)=2$ and $|I(P_n)|=n-1$. Further, since $\Delta(P_n)=2<|V(H)|+3$, we have from (b.1) in Proposition \ref{prop_2t4} that $\varphi(P_n\boxdot H)\leq 2\cdot |V(H)|+3$. In order to prove that this upper bound is reached, it is enough to define the $b$-chromatic coloring $c$ of the graph $P_n\boxdot H$ such that, for each triple of non-negative integers $i<n$, $j<n-1$ and $k<|V(H)|$, we have that $c(u_i)=(i+1)\,\mathrm{mod}\, (2\cdot |V(H)|+3)$, $c(s_{j,j+1})=j\,\mathrm{mod}\,(2\cdot |V(H)|+3)$ and $c(v_{i,k})=(i+2k+3)\,\mathrm{mod}\,(2\cdot |V(H)|+3)$. A $b$-rainbow set is formed by the vertices $s_{0,1},\ldots,s_{2\cdot |V(H)|+2,2\cdot |V(H)|+3}$. 
\end{proof}

\vspace{0.2cm}

Figure \ref{Fig_P10P3} illustrates the $b$-chromatic coloring described in the previous theorem for $(n,t)=(10,3)$.

\begin{figure}[ht]
\begin{center}
\includegraphics[scale=0.6]{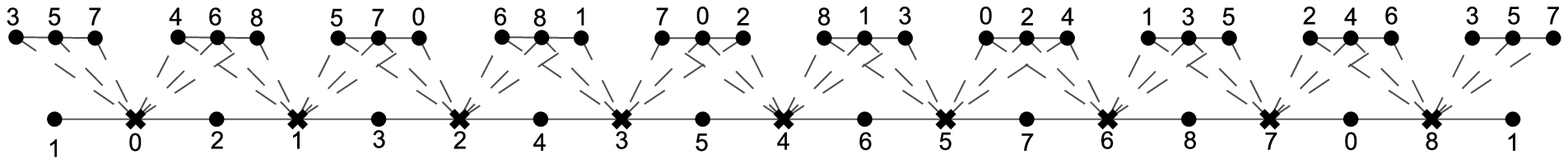}
\caption{Optimal $b$-chromatic coloring of $P_{10}\boxdot P_3$.}
\label{Fig_P10P3}
\end{center}
\end{figure}

Now, we focus separately on each one of the mentioned graphs $P_n\boxdot G$, with $G\in\mathcal{G}$. In all the proofs, we define an appropriate  $b$-chromatic coloring $c$ of the graph $P_n\boxdot G$ such that
\begin{equation}\label{eq_Pn}
\begin{cases}
c(u_i)=(i+1)\,\mathrm{mod}\,\min\{m(P_n\boxdot P_t),\,n\},\\
c(s_{j,j+1})=j\,\mathrm{mod}\,\min\{m(P_n\boxdot P_t),\,n\}.
\end{cases}
\end{equation}
for every pair of non-negative integers $i<n$ and $j<n-1$. We start by determining the $b$-chromatic number for the SVN corona of two paths.

\begin{theorem}\label{theorem_path} Let $n>2$ and $t>2$ be two positive integers. Then,
\[\varphi(P_n\boxdot P_t)=\begin{cases}
\begin{array}{lll}
4,& \text{ if } n=3 \text{ and } t\in\{3,4\},\\
5,& \text{ if } \begin{cases}
n=3 \text{ and } t\geq 5,\\
n\in\{4,5\},
\end{cases}\\
n-1,& \text{ if } 6\leq n\leq 2t+3,\\
2t+3,& \text{ otherwise}.
\end{array}
\end{cases}\]
\end{theorem}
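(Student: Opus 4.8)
The plan is to split the proof into the four cases appearing in the statement, treating the upper bounds and lower bounds (explicit colorings) separately in each regime. For the two ``otherwise'' regimes the work is already essentially done: when $n > 2t+3$, part (b) of Proposition~\ref{proposition_2t4} (applied with $H = P_t$, so $|V(H)| = t$) gives $\varphi(P_n\boxdot P_t) = 2t+3$ directly. When $6 \le n \le 2t+3$, part (a) of Proposition~\ref{proposition_2t4} yields the upper bound $\varphi(P_n\boxdot P_t) \le n-1$, since $\Delta(P_t) = 2$ forces $\Delta(P_t)+4 = 6 \le n$; so here the task reduces to exhibiting an optimal coloring with exactly $n-1$ colors, for which I would use the template~\eqref{eq_Pn} with $\min\{m(P_n\boxdot P_t),n\} = n-1$ (one checks $m(P_n\boxdot P_t) = n-1$ in this range from~\eqref{eq_d}), color the copies of $P_t$ by a shift rule $c(v_{i,k}) = (i+2k+3) \bmod (n-1)$ analogous to the one in Proposition~\ref{proposition_2t4}, verify properness, and point to the $b$-rainbow set formed by the subdivision vertices $s_{0,1},\dots,s_{n-2,n-1}$, each of which has degree $2t+2 \ge n-1$ and can be forced to see all $n-1$ colors among its $P_t$-copy neighbours and its two path-vertex neighbours.

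For the small cases $n = 3$ and $n \in \{4,5\}$, the general Proposition~\ref{proposition_2t4} no longer applies (the hypothesis $n \ge 6$ fails), so I would compute $m(P_n\boxdot P_t)$ by hand from the degree formula~\eqref{eq_d}: the $n-1$ subdivision vertices have degree $2t+2$, the two (or three) path-vertices of $P_n$ have degree $1$ or $2$, and the $nt$ copy-vertices $v_{i,k}$ have degree $d_{P_n}(u_i) + d_{P_t}(v_k) \le 2 + 2 = 4$. Ordering degrees nonincreasingly and counting gives $m(P_3\boxdot P_t) = 4$ when $t \in \{3,4\}$, $m(P_3\boxdot P_t) = 5$ when $t \ge 5$, and $m(P_4\boxdot P_t) = m(P_5\boxdot P_t) = 5$; Lemma~\ref{lemma1} then supplies the upper bounds $4$ and $5$ respectively. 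The matching lower bounds require explicit small colorings: for $n = 3$, $t \in \{3,4\}$ a $4$-coloring with $b$-vertices among the two subdivision vertices and two suitably chosen copy-vertices; for $n = 3$, $t \ge 5$ and for $n \in \{4,5\}$ a $5$-coloring, again built on the template~\eqref{eq_Pn} for the spine and a shifted pattern on the $P_t$-copies, accompanied (as the paper promises) by an illustrative figure.

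The main obstacle will be the lower-bound constructions in the borderline small cases, in particular $n = 3$ with $t \ge 5$ and $n \in \{4,5\}$: here one must squeeze out one color more than the naive ``$n-1$'' count, so the $b$-vertices cannot all be subdivision vertices on the short spine, and one has to certify that certain copy-vertices $v_{i,k}$ (of degree at most $4$, or at most $\Delta(P_n)+\Delta(P_t)$) nonetheless see all $5$ colors in their neighbourhoods — this forces a careful choice of which vertex in each $P_t$-copy plays the role of $b$-vertex and how the shift constants are tuned so that a $b$-vertex of degree exactly $4$ meets exactly the four colors distinct from its own. A secondary, purely bookkeeping, obstacle is verifying properness of the proposed colorings across the three types of edges of $S(P_n)$ together with the dashed edges joining $N_{S(P_n)}(u_i)$ to the $(i+1)^{\text{th}}$ copy of $P_t$; this is routine modular arithmetic but must be done for each residue class of $n$ modulo $2t+3$. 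I would also double-check the edge cases $n = 2t+3$ and $n = 2t+4$ where two of the formula's branches meet, to confirm the stated value ($n-1 = 2t+2$ versus $2t+3$) is attained and not exceeded.
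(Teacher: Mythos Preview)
Your overall strategy is the same as the paper's: Proposition~\ref{proposition_2t4} handles $n>2t+3$ and supplies the upper bound $n-1$ for $6\le n\le 2t+3$; your $m$-degree computations from~\eqref{eq_d} for $n\in\{3,4,5\}$ are correct and give the upper bounds $4$ and $5$; and the lower bounds are to come from explicit colorings on the template~\eqref{eq_Pn} with the $n-1$ subdivision vertices forming the $b$-rainbow set in the middle range. The gap is in your proposed coloring of the $P_t$-copies for $6\le n\le 2t+3$: the rule $c(v_{i,k})=(i+2k+3)\bmod(n-1)$ is \emph{not} proper. For $0<i<n-1$ the vertex $v_{i,k}$ is adjacent to both $s_{i-1,i}$ and $s_{i,i+1}$ (colors $i-1$ and $i$), so properness requires $2k+3\not\equiv 0$ and $2k+4\not\equiv 0\pmod{n-1}$ for every $k<t$; but in the range $n\le 2t+3$ one of these congruences always has a solution $k<t$ (for instance with $n=8$ one gets $c(v_{i,2})=(i+7)\bmod 7=i=c(s_{i,i+1})$, and with $n=6$ already $c(v_{i,1})=i$). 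So the ``routine modular arithmetic'' you anticipate actually fails: once $k$ is large enough the shift wraps around into the forbidden colors of the adjacent subdivision vertices.

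This is precisely what the paper's more elaborate construction is designed to avoid. For $7\le n\le 2t+3$ it applies the linear shift only for $j$ below roughly $\frac{n-4}{2}$ (with a parity-dependent offset $4-2(i\bmod 2)$ when $n$ is odd, so that the colors of $v_{i,\cdot}$ and $v_{i+1,\cdot}$ interleave correctly) and then extends $2$-periodically via $c(v_{i,j})=c(v_{i,j-2})$. This confines the colors on each copy to a window that misses $i-1$ and $i$, yet the two interleaved windows at $i$ and $i+1$, together with $c(u_i),c(u_{i+1})$, still exhaust all $n-1$ residues at $s_{i,i+1}$. The paper also handles $n\le 6$ by a separate ad hoc formula rather than folding $n=6$ into the generic case, since the same wraparound obstruction already appears there. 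Your plan is salvageable, but the truncation-and-periodic-extension idea (or something equivalent) is the missing ingredient, not a bookkeeping detail.
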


\begin{proof} The case $n>2t+3$ follows from Proposition \ref{proposition_2t4}. So, we assume from now on that $n\leq 2t+3$. From Lemma \ref{lemma1} and (\ref{eq_d}), we have that all the described values are upper bounds of the $b$-chromatic number under consideration. In order to see that they are reached, we define an appropriate $b$-chromatic coloring $c$ of the graph $P_n\boxdot P_t$ satisfying (\ref{eq_Pn}). For each pair of non-negative integers $i<n$ and $j<t$, the following two cases arise. Here, we assume that $P_t=\langle\,v_0,\ldots,v_{t-1}\,\rangle$. 

If $n\leq 6$, then
    \[c(v_{i,j})=\begin{cases}
    \begin{array}{ll}
        (i+(j\,\mathrm{mod}\,2) + 1)\,\mathrm{mod}\, 4, & \text{ if } n=3 \text{ and } t\in\{3,4\},\\
        (i+(j\,\mathrm{mod}\,3)+1)\,\mathrm{mod}\,5, & \text{ otherwise}.
    \end{array}
    \end{cases}\]
A $b$-rainbow set is formed by the vertices $s_{0,1},\ldots,s_{n-2,n-1}$, together with the vertex $v_{1,1}$, if $n<6$; the vertex $v_{1,2}$, if $n\in\{3,4\}$; and the vertex $v_{1,3}$, if $n=3$ and $t\geq 5$. (Figure \ref{Fig_P4P4} illustrates the case $n=t=4$.)

\begin{figure}[ht]
\begin{center}
\includegraphics[scale=0.15]{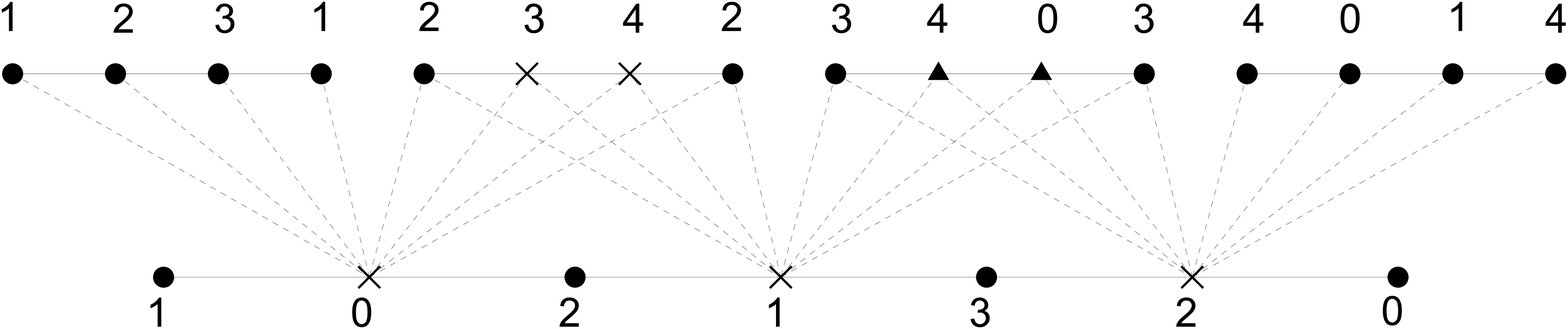}
\caption{Optimal $b$-chromatic coloring of $P_4\boxdot P_4$.}
\label{Fig_P4P4}
\end{center}
\end{figure}
    
Further, if $7\leq n\leq 2t+3$, then $c(v_{i,j})$ is
\[\begin{cases}
    \begin{array}{ll}
    (i+2j+3)\,\mathrm{mod}\,(n-1), & \text{ if }  n \text{ is even and }j<\frac {n-4}2,\\
        \left(i+2j+(4-2(i\,\mathrm{mod}\,2))\right)\,\mathrm{mod}(n-1), & \text{ if } \begin{cases}n,  i+1 \text{ are odd and } j<\left\lfloor\,\frac {n-4}2\,\right\rfloor,\\
        n, i \text{ are odd and } j<\left\lceil\,\frac {n-4}2\,\right\rceil,
        \end{cases}\\
        (i+2)\,\mathrm{mod}\,6,&\text{ if }
        (j,n)=(1,7) \text{ and } i \text{ is even},\\
        c(v_{i,j-2}),&\text{ otherwise}.
    \end{array}
    \end{cases}\]
A $b$-rainbow set is formed by the vertices $s_{0,1},\ldots,s_{n-2,n-1}$. (Figure \ref{Fig_P7P4} illustrates the case $(n,t)\in\{(7,4),\,(8,4)\}$.)
\end{proof}

\begin{figure}[ht]
\begin{center}
\begin{tabular}{c}
\includegraphics[scale=0.085]{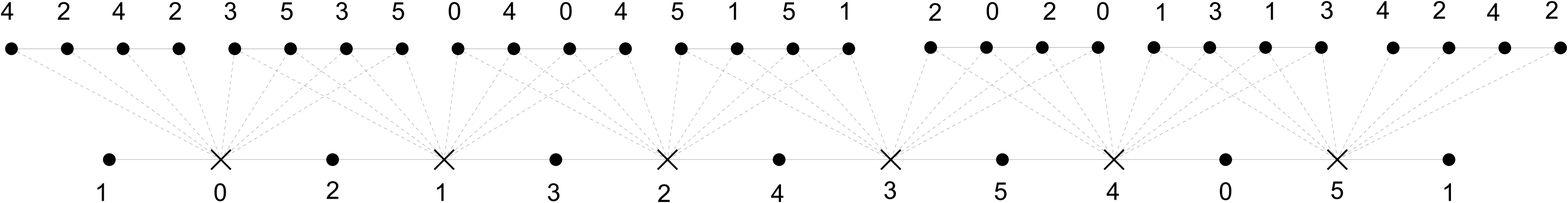}\\
\includegraphics[scale=0.075]{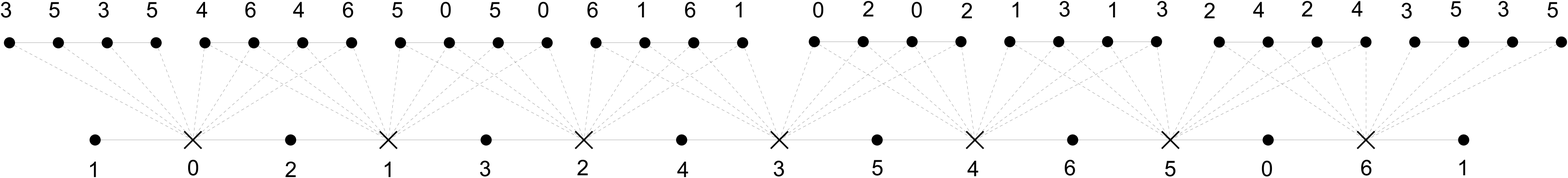}
\end{tabular}
\caption{Optimal $b$-chromatic colorings of $P_{7}\boxdot P_4$ and $P_{8}\boxdot P_4$.}
\label{Fig_P7P4}
\end{center}
\end{figure}

The next graph to study is the SVN corona of a path and a cycle.

\begin{theorem}\label{theorem_PnCt} Let $n>2$ and $t>2$ be two positive integers. Then,
\[\varphi(P_n\boxdot C_t)=\begin{cases}
\begin{array}{ll}
4, & \text{ if } (n,t)=(3,4),\\
5, & \text{ if } \begin{cases}
n=3 \text{ and } t\neq 4,\\
n\in\{4,5\},
\end{cases}\\
n-1, & \text{ if } 6\leq n\leq 2t+3,\\
2t+3, & \text{ otherwise}.
\end{array}
\end{cases}\]
\end{theorem}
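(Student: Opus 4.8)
The plan is to imitate, essentially line by line, the proof of Theorem~\ref{theorem_path}, since the two data that drive that argument — $\Delta(C_t)=2$ and $|I(P_n)|=n-1$ — coincide with those of $P_t$; the only structural novelty is that $C_t$ is $2$-regular and carries the edge joining its first and last vertices. The case $n>2t+3$ is immediate: Proposition~\ref{proposition_2t4}(b), applied with $H=C_t$, gives $\varphi(P_n\boxdot C_t)=2t+3$. I would therefore assume $n\le 2t+3$ henceforth and, exactly as in the path case, split the work according to whether $n\in\{3,4,5\}$ or $6\le n\le 2t+3$.

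For the upper bounds I would read off the relevant degrees from (\ref{eq_d}). When $6\le n\le 2t+3$, the $n-1$ subdivision vertices have degree $2t+2$, and $n-1\le 2t+2$ precisely in this range, whereas every other vertex has degree at most $\Delta(P_n)+\Delta(C_t)=4<n-1$; hence, sorting degrees in decreasing order, the vertex in position $n$ has degree at most $4<n-1$, so that $m(P_n\boxdot C_t)=n-1$ and Lemma~\ref{lemma1} gives $\varphi(P_n\boxdot C_t)\le n-1$. When $n\in\{3,4,5\}$, Proposition~\ref{prop_2t4}(a) gives $\Delta(P_n\boxdot C_t)=2t+2$, and the same $m$-degree count now stabilises at $5$, the positions immediately after the subdivision vertices being filled by the degree-$4$ vertices $v_{i,j}$ with $u_i$ interior; thus $\varphi(P_n\boxdot C_t)\le 5$. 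The lone case in which this is not tight is $(n,t)=(3,4)$, where the $m$-degree still returns $5$; here I would argue by hand. In a hypothetical $b$-chromatic $5$-colouring every $b$-vertex has degree at least $4$, so the $b$-vertices lie among the two subdivision vertices and the four vertices $v_{1,0},\dots,v_{1,3}$, each of degree exactly $4$; hence at least three of the latter are $b$-vertices. But each of those vertices has neighbourhood $\{s_{0,1},s_{1,2},v_{1,j-1},v_{1,j+1}\}$, and in $C_4$ the two non-adjacent vertices $v_{1,j}$ and $v_{1,j+2}$ have the \emph{same} such neighbourhood; if both were $b$-vertices, those four neighbours would carry four distinct colours exhausting all but one colour, forcing $c(v_{1,j})=c(v_{1,j+2})$, which in turn prevents their common $C_4$-neighbour $v_{1,j+1}$ (and $v_{1,j-1}$) from being a $b$-vertex. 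Since every three-element subset of the cycle $C_4$ contains such a non-adjacent pair, at most two of the $v_{1,\cdot}$ are $b$-vertices, leaving at most $4<5$ $b$-vertices in all — a contradiction. Therefore $\varphi(P_3\boxdot C_4)\le 4$.

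For the matching lower bounds I would display, in each case, a proper colouring $c$ obeying (\ref{eq_Pn}) on the vertices $u_i$ and $s_{j,j+1}$, and choose $c(v_{i,j})$ by a shift formula of the same flavour as in Theorem~\ref{theorem_path} — roughly $c(v_{i,j})\equiv i+2j+\mathrm{const}$ modulo the target value when $7\le n\le 2t+3$, and $c(v_{i,j})\equiv i+(j\bmod 2\text{ or }3)+1$ modulo $4$ or $5$ in the small cases — subject to two requirements: (i) each of the $n$ copies of $C_t$ is properly coloured, and (ii) each subdivision vertex $s_{j,j+1}$, which carries colour $j$, sees every other colour among its neighbours $\{u_j,u_{j+1}\}\cup\{v_{j,0},\dots,v_{j,t-1}\}\cup\{v_{j+1,0},\dots,v_{j+1,t-1}\}$. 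The $b$-rainbow set would then be $\{s_{0,1},\dots,s_{n-2,n-1}\}$ when $n\ge 6$ (it has the required $n-1$ elements throughout the range $6\le n\le 2t+3$), and $\{s_{0,1},s_{1,2}\}$ together with one or two aptly placed vertices $v_{1,j}$ when $n\in\{3,4,5\}$, exactly mirroring the path statement; the values $\varphi(C_t)\in\{2,3\}$ from Proposition~\ref{proposition_Kouider} account for $(3,4)$ being the unique case producing $4$ instead of $5$.

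The main obstacle will be requirement (i) in the constructive step for $7\le n\le 2t+3$. Unlike $P_t$, a copy of $C_t$ contains the edge $v_{i,0}v_{i,t-1}$, so a plain arithmetic progression $c(v_{i,j})\equiv i+2j+\mathrm{const}$ may colour $v_{i,0}$ and $v_{i,t-1}$ alike — a real danger when $t$ is odd, or when $2(t-1)$ is a multiple of the modulus — and repairing it forces a local perturbation near the end of each copy. As the exceptional clause $(j,n)=(1,7)$ in the proof of Theorem~\ref{theorem_path} already signals, this will fan out into a handful of parity subcases in $n$, $i$ and $t$, and checking simultaneously, in each of them, that the colouring stays proper across the cycle edge and that every subdivision vertex remains a $b$-vertex is the bulk of the work. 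Everything else follows the template already established by Proposition~\ref{proposition_2t4} and Theorem~\ref{theorem_path}.
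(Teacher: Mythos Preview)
Your plan is correct and matches the paper's approach almost exactly: the same appeal to Proposition~\ref{proposition_2t4} for large $n$, the same $m$-degree upper bounds, and the same strategy of recycling the colouring from Theorem~\ref{theorem_path} with local patches at the extra cycle edge $v_{i,0}v_{i,t-1}$ (the paper's patches turn out to be needed only for $n\in\{7,8,9\}$ with $t$ odd). The one small divergence is the exceptional case $(n,t)=(3,4)$, where the paper argues more briskly that a hypothetical $5$-colouring would restrict to a $b$-chromatic $3$-colouring of the middle copy of $C_4$---since $s_{0,1}$ and $s_{1,2}$ dominate that copy, at most three colours remain on it, and each $b$-vertex $v_{1,j}$ stays a $b$-vertex in the restriction---contradicting $\varphi(C_4)=2$ from Proposition~\ref{proposition_Kouider}; your direct twin-neighbourhood argument reaches the same conclusion and is equally valid.
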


\begin{proof} The case $n>2t+3$ follows from Proposition \ref{proposition_2t4}. So, we assume from now on that $n\leq 2t+3$. From Lemma \ref{lemma1} and (\ref{eq_d}), we have that
\begin{equation}\label{eq_PC}
 \varphi(P_n\boxdot C_t)\leq m(P_n\boxdot C_t)=\begin{cases}
\begin{array}{ll}
5, & \text{ if } n\in\{3,4,5\},\\
n-1, & \text{ otherwise}.
\end{array}
\end{cases}   
\end{equation}
This upper bound is not reached for $(n,t)=(3,4)$, because every  $b$-chromatic coloring of the SVN corona $P_3\boxdot C_4$ with five colors would imply the existence of a $b$-chromatic coloring of the cycle $C_4$ with three colors. It is not possible, because $\varphi(C_4)=2$ (see Proposition \ref{proposition_Kouider}). Hence, $\varphi(P_3\boxdot C_4)\leq 4$. Figure \ref{Fig_P3C4} shows that this upper bound is indeed reached.

\begin{figure}[ht]
\begin{center}
\includegraphics[scale=0.15]{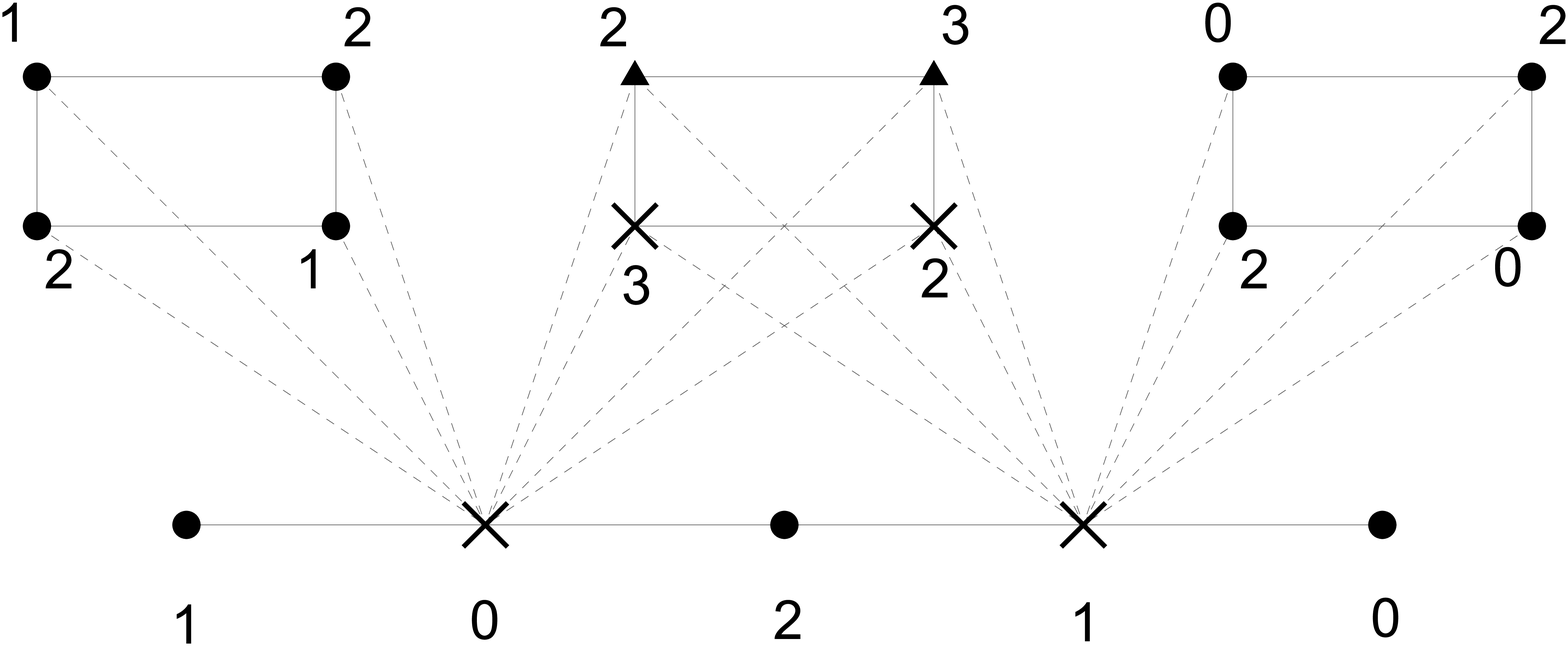}
\caption{Optimal $b$-chromatic coloring of $P_3\boxdot C_4$.}
\label{Fig_P3C4}
\end{center}
\end{figure}

To prove that the upper bound in $(\ref{eq_PC})$ is reached for all $(n,t)\neq (3,4)$, we define an appropriate $b$-chromatic coloring $c$ of $P_n\boxdot C_t$ satisfying (\ref{eq_Pn}). Two cases arise for each pair of non-negative integers $i<n$ and $j<t$. (Here, we assume that $C_t=\langle\,v_0,\ldots,v_{t-1},v_0\,\rangle$.) 

First, if $n\leq 6$ and $(n,t)\neq (3,4)$, then
    \[c(v_{i,j})=\begin{cases}
    \begin{array}{ll}
    2, & \text{ if } (n,i,j)=(3,1,2),\\
    (i+2 + (j\,\mathrm{mod}\,2))\,\mathrm{mod}\, 5,& \text{ if } j\neq t-1 \text{ and } (n,i,j)\neq (3,1,2),\\
    i+1, &\text{ if } j=t-1.
    \end{array}
    \end{cases}\]
A $b$-rainbow set is formed by the vertices $s_{0,1},\ldots,s_{n-2,n-1}$, together with the vertex $v_{2,0}$, if $n\in\{4,5\}$; the vertex $v_{1,0}$, if $n\in\{3,4\}$; and the vertices $v_{1,1}$ and $v_{1,t-1}$, if $n=3$. (Figure \ref{Fig_P4C4} illustrates the case $n=t=4$.)

Second, if $7\leq n\leq 2t+3$, then we define $c(v_{i,j})$ as in the proof of Theorem \ref{theorem_path}, except for
    \[c(v_{i,j})=\begin{cases}
    \begin{array}{ll}
    i+1,& \text{ if } n=8, t \text{ is odd and } j=t-1,\\
    i+3,& \text{ if } n\in\{7,9\}, t \text{ is odd and } j=t-1.
    \end{array}
    \end{cases}\]
A $b$-rainbow set is formed by the vertices $s_{0,1},\ldots,s_{n-2,n-1}$.
\end{proof}

\begin{figure}[ht]
\begin{center}
\includegraphics[scale=0.15]{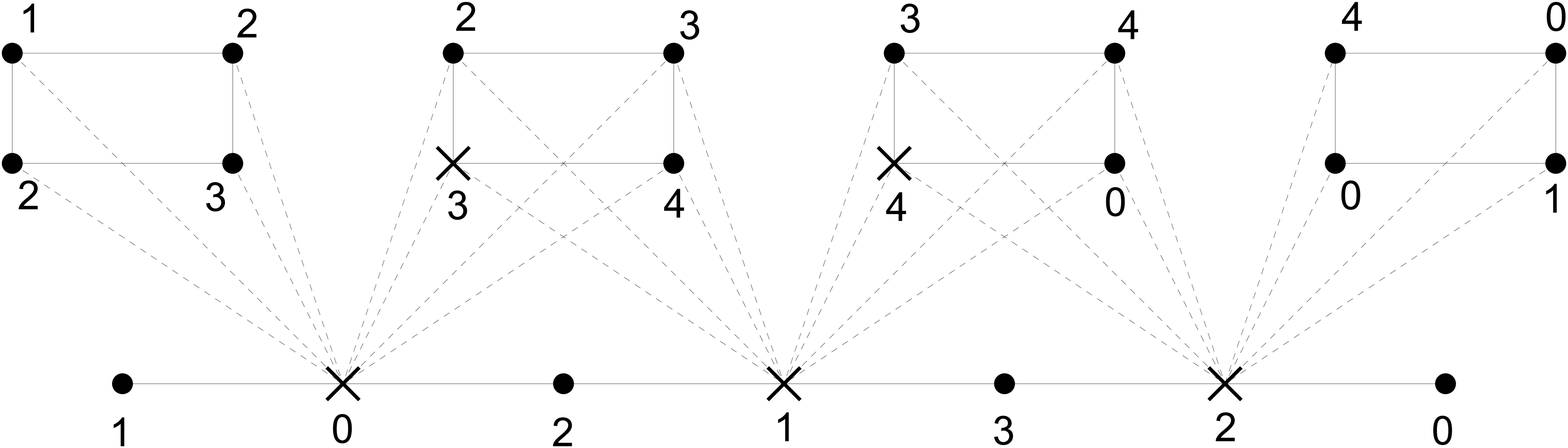}
\caption{Optimal $b$-chromatic coloring of $P_4\boxdot C_4$.}
\label{Fig_P4C4}
\end{center}
\end{figure}

Now, we study the SVN corona of a path and a star. (Recall here that $S_t$ is the star with $t$ leaves, so its order is $t+1$. It is important for the application of Proposition \ref{proposition_2t4}.)

\begin{theorem}\label{theorem_PnSt}
Let $n>2$ and $t>2$ be two positive integers. Then,
\[\varphi(P_n \boxdot S_t) = \begin{cases}
\begin{array}{lll}
2n-1, & \text{ if } n\leq \frac {t+3}2,\\
t+2, & \text{ if }n=\lceil\,\frac {t+4}2\,\rceil,\\
t+3, & \text{ if } \lceil\,\frac {t+4}2\,\rceil < n \leq  t+4,\\
n-1,&\text{ if } t+4<n\leq 2t+5,\\
2t+5, & \text{ otherwise}.
\end{array}
\end{cases}\]
\end{theorem}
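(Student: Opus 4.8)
The plan is to split off the easy regime $n>2t+5=2\,|V(S_t)|+3$ from the rest. Since $\Delta(S_t)=t\le |V(S_t)|+3$, Proposition~\ref{proposition_2t4}(b) applies with $H=S_t$ and gives $\varphi(P_n\boxdot S_t)=2t+5$ there, which is the ``otherwise'' line. So assume from now on that $n\le 2t+5$; the first task is to read off $m(P_n\boxdot S_t)$ from (\ref{eq_d}). Writing $v_0$ for the centre of $S_t$ and $v_1,\dots,v_t$ for its leaves, the degree multiset of $P_n\boxdot S_t$ consists of: $n-1$ subdivision vertices of degree $2t+4$; the $n-2$ centres $v_{i,0}$ over an interior vertex of $P_n$, of degree $t+2$; the two centres over the endpoints of $P_n$, of degree $t+1$; and every remaining vertex (the $u_i$ and the leaf-copies $v_{i,j}$ with $j\ge 1$) of degree at most $3<t+1$. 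Counting, for each $k$, how many vertices have degree at least $k-1$ then yields
\[
m(P_n\boxdot S_t)=\begin{cases}
2n-1,& \text{if } 2n-1\le t+2,\\
t+2,& \text{if } n=\lceil\tfrac{t+4}{2}\rceil,\\
t+3,& \text{if } \lceil\tfrac{t+4}{2}\rceil<n\le t+4,\\
n-1,& \text{if } t+4<n\le 2t+5,
\end{cases}
\]
and the first condition $2n-1\le t+2$ is exactly $n\le\tfrac{t+3}{2}$. By Lemma~\ref{lemma1} this already gives the required upper bound in every case; in contrast to $P_3\boxdot C_4$ in Theorem~\ref{theorem_PnCt}, no further argument is needed, because the value in the statement always coincides with the $m$-degree.

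For the matching lower bounds I would exhibit, in each range, a proper $k$-colouring of $P_n\boxdot S_t$ with $k$ the claimed value, colouring the vertices $u_i$ and $s_{j,j+1}$ cyclically along the subdivision path $u_0,s_{0,1},u_1,s_{1,2},\dots$ as in (\ref{eq_Pn}) with modulus $\min\{m(P_n\boxdot S_t),n\}$, so that $s_{0,1},\dots,s_{n-2,n-1}$ get pairwise distinct colours. The choice of $b$-rainbow set depends on the range: if $t+4<n\le 2t+5$ (so $k=n-1$) it is $\{s_{0,1},\dots,s_{n-2,n-1}\}$, exactly as in Theorems~\ref{theorem_path} and~\ref{theorem_PnCt}; if $\lceil\tfrac{t+4}{2}\rceil<n\le t+4$ (so $k=t+3$) it is those $n-1$ subdivision vertices together with $t+4-n$ of the interior centres $v_{i,0}$ (of degree $t+2=k-1$), and the hypothesis on $n$ forces $0\le t+4-n\le n-2$ so that enough of them exist; if $n=\lceil\tfrac{t+4}{2}\rceil$ (so $k=t+2$) it is the $n-1$ subdivision vertices together with $t+3-n$ centres, now possibly including the two endpoint centres (of degree $t+1=k-1$); and if $n\le\tfrac{t+3}{2}$ (so $k=2n-1$) it is the $n-1$ subdivision vertices together with all $n$ centres $v_{i,0}$, the bound $2n-1\le t+2$ guaranteeing that even an endpoint centre can see the $2n-2$ colours it needs.

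The heart of the construction is then the choice of colours on the leaf-copies $v_{i,j}$ with $j\ge1$. Each such leaf is constrained only by its centre $v_{i,0}$ and by $s_{i-1,i},s_{i,i+1}$, so keeping the colouring proper is cheap; the real work is to make every chosen $b$-vertex see all the remaining colours. A subdivision vertex $s_{j,j+1}$ has neighbourhood $\{u_j,u_{j+1}\}\cup\{v_{j,0},\dots,v_{j,t}\}\cup\{v_{j+1,0},\dots,v_{j+1,t}\}$, i.e.\ $2t+4\ge k$ slots, so its missing colours can always be planted among the surrounding leaves; a centre $v_{i,0}$ has only its $t$ leaves plus $s_{i-1,i},s_{i,i+1}$, and in the ranges where a centre is used as a $b$-vertex the relevant degree bound ($t+1\ge 2n-2$, respectively $t+2\ge k-1$) is precisely what makes its neighbourhood big enough to host the $k-1$ colours. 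I expect the main obstacle to be exactly this bookkeeping: arranging the leaf colours so that the rainbow condition holds simultaneously at every $b$-vertex while keeping the colouring proper at the subdivision vertices shared by two consecutive star copies --- and, just as in Theorems~\ref{theorem_path} and~\ref{theorem_PnCt}, the explicit colouring rule for the $v_{i,j}$ will probably have to split into sub-cases according to the parity of $n$ and the smallest boundary values of $n$.
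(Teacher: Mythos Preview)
Your plan is correct and follows essentially the same route as the paper: dispose of $n>2t+5$ via Proposition~\ref{proposition_2t4}, read off $m(P_n\boxdot S_t)$ from the degree sequence (your computation is right, and here the $m$-degree is always attained), colour $u_i,s_{j,j+1}$ via~(\ref{eq_Pn}), and take as $b$-rainbow set the $n-1$ subdivision vertices together with the appropriate number of star-centres $v_{i,0}$ in each range. The paper then supplies the explicit leaf-colouring you anticipate, splitting only into $n\le t+4$ versus $t+4<n\le 2t+5$ (with a parity distinction on $i$, not on $n$); one small slip is that the hypothesis you quote for Proposition~\ref{proposition_2t4}(b) is unnecessary---that part has no $\Delta$-condition.
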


\begin{proof} The case $n>2t+5$ follows from Proposition \ref{proposition_2t4}.  So, we may assume that $n\leq 2t+5$. From Lemma \ref{lemma1} and (\ref{eq_d}), all the described values are upper bounds of the $b$-chromatic number. To see that they are reached, we define an appropriate $b$-chromatic coloring $c$ of the graph $P_n\boxdot S_t$ satisfying (\ref{eq_Pn}). For each pair of non-negative integers $i<n$ and $j\leq t$, the following two cases arise. Here, we assume that $V(S_t)=\{v_0,\ldots,v_t\}$, where $v_0$ is the center of the star.  

If $n\leq t+4$, then let $\alpha_{n,t}=m(P_n\boxdot S_t)-n+1$. Then, we define $c(v_{i,j})$ as
 \[\begin{cases}
    \begin{array}{ll}
    n-1+((i+j)\,\mathrm{mod}\,\alpha_{n,t}), & \text{ if } j<\alpha_{n,t},\\
    (j-\alpha_{n,t}+1)\,\mathrm{mod}\, (n-1),& \text{ if } \alpha_{n,t}\leq j<\alpha_{n,t} + n-2 \text{ and } i=0,\\
    (j-\alpha_{n,t})\,\mathrm{mod}\, (n-1),& \text{ if } \alpha_{n,t}\leq j<\alpha_{n,t} + n-2 \text{ and } i=n-1,\\
       (i+j-\alpha_{n,t}+1)\,\mathrm{mod}\, (n-1),& \text{ if } \alpha_{n,t}\leq j<\alpha_{n,t} + n-3 \text{ and } i\not\in\{0,n-1\},\\
    c(v_{i,j-1}),&\text{ otherwise}.
    \end{array}
    \end{cases}\]
 A $b$-rainbow set is formed by the vertices $s_{0,1},\ldots,s_{n-2,n-1}$, together with either the vertices $v_{0,0},\ldots,v_{n-1,0}$, if $n\leq \frac {t+3}2$; or the vertices $v_{0,0},\ldots,v_{t-n+2,0}$, if $n=\lceil\frac{t+4}2\rceil$; or the vertices $v_{0,1},\ldots,v_{t-n+4,0}$, if $\lceil\frac{t+4}2\rceil< n\leq 2t+5$. (Figure \ref{Fig_P3S4} illustrates the case $(n,t)\in\{(3,4),\,(4,4),\,(6,4)\}$.)
     
Further, if $t+4<n\leq 2t+5$, then
 \[c(v_{i,j})=\begin{cases}
    \begin{array}{ll}
    (i+3)\,\mathrm{mod}\,(n-1),& \text{ if } j=0,\\
    (i-j-2)\,\mathrm{mod}\,(n-1), & \text{ if } i \text{ is even and } 0<j\leq\min\{n-6 , t\},\\
    (i+j+3)\,\mathrm{mod}\,(n-1), & \text{ if } i \text{ is odd and } 0<j\leq\min\{n-6 , t\},\\
    c(v_{i,j-1}), & \text{ otherwise}.
    \end{array}
    \end{cases}\]
    A $b$-rainbow set is formed by the vertices $s_{0,1},\ldots,s_{n-2,n-1}$. (Figure \ref{Fig_P9S4} illustrates the graph $P_9\boxdot S_4$.)
\end{proof}

\begin{figure}[ht]
\begin{center}
\begin{tabular}{c}
\includegraphics[scale=0.12]{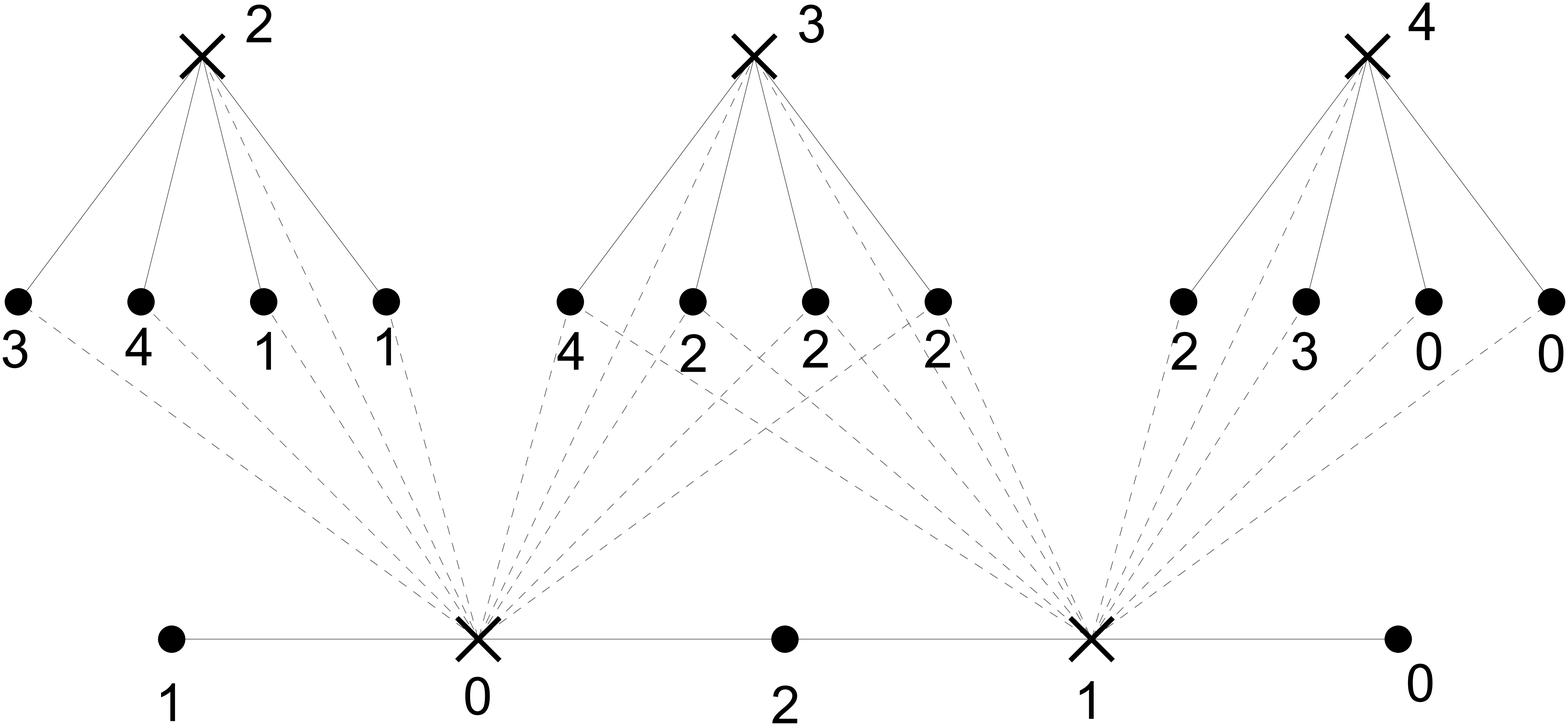}\\
\includegraphics[scale=0.12]{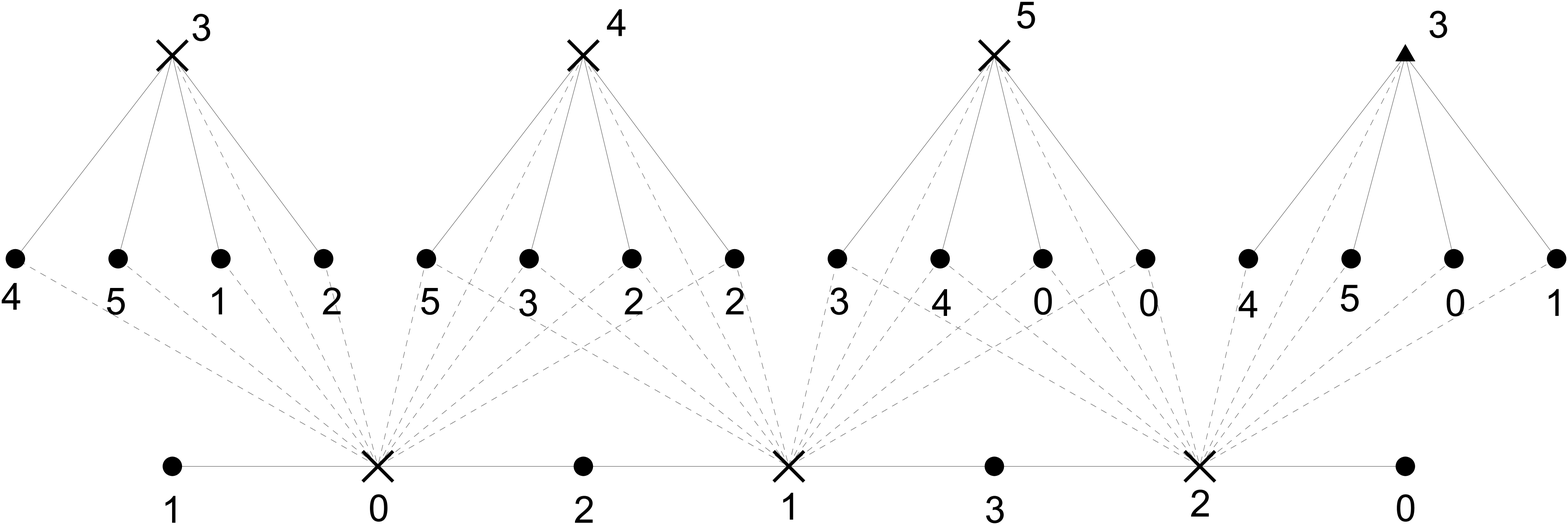}\\
\includegraphics[scale=0.1]{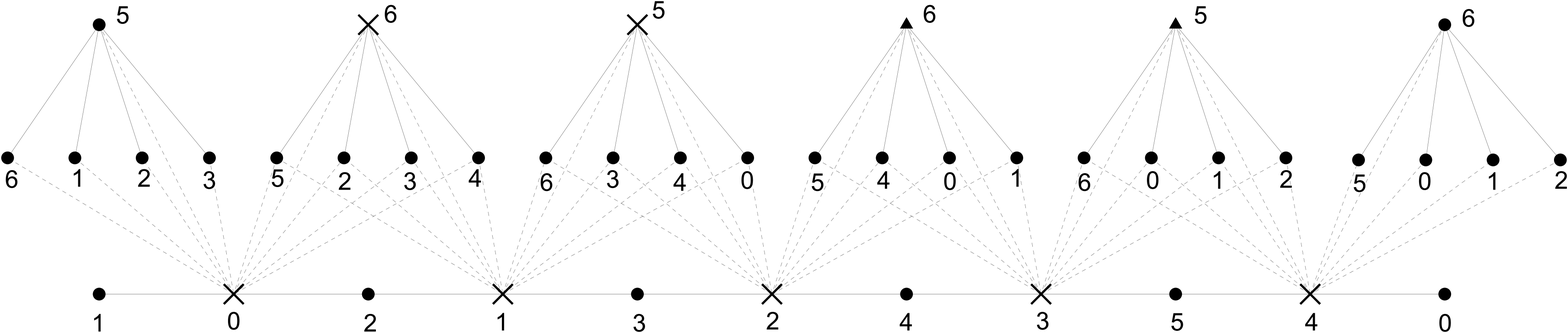}
\end{tabular}
\caption{Optimal $b$-chromatic coloring of $P_n\boxdot S_4$, for all $n\in\{3,4,6\}$.}
\label{Fig_P3S4}
\end{center}
\end{figure}

\begin{figure}[ht]
\begin{center}
\includegraphics[scale=0.07]{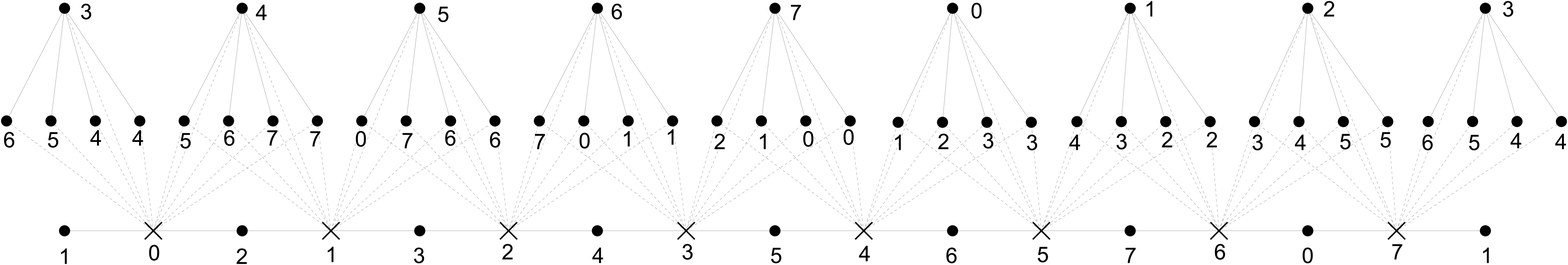}
\caption{Optimal $b$-chromatic coloring of $P_9\boxdot S_4$.}
\label{Fig_P9S4}
\end{center}
\end{figure}

\vspace{0.2cm}

Finally, we study the SVN corona of a path and a complete graph.

\begin{theorem}\label{theorem_PnKt} Let $n>2$ and $t$ be two positive integers. Then,
\[\varphi(P_n \boxdot K_t)=
\begin{cases}
\begin{array}{ll}
t+2, & \text{ if } n \leq t+3,\\
n-1,&\text{ if } t+3<n\leq 2t+3,\\
2t+3,& \text{ otherwise}.
\end{array}
\end{cases}\]
\end{theorem}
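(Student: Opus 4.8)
The plan is to mirror the scheme of Theorems~\ref{theorem_path}--\ref{theorem_PnSt}. The range $n>2t+3$ is immediate from Proposition~\ref{proposition_2t4} (applied with $H=K_t$, so $|V(K_t)|=t$), hence I would only treat $n\le 2t+3$. For the upper bounds I would read the degree sequence of $P_n\boxdot K_t$ off (\ref{eq_d}): there are $n-1$ subdivision vertices $s_{j,j+1}$ of degree $2t+2$, then $(n-2)t$ vertices $v_{i,k}$ hanging from internal path vertices, of degree $t+1$, then $2t$ vertices $v_{i,k}$ hanging from the two ends, of degree $t$, and finally the $n$ path vertices of degree at most $2$. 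Since $2t+2\ge n-2$ throughout the range, the degree-$(2t+2)$ block occupies positions $1,\dots,n-1$, all admissible, while the degree-$(t+1)$ block begins at position $n$, where admissibility requires $t+1\ge n-1$. Hence $m(P_n\boxdot K_t)=t+2$ when $n\le t+3$ and $m(P_n\boxdot K_t)=n-1$ when $t+3<n\le 2t+3$, and Lemma~\ref{lemma1} turns these into the asserted upper bounds; what remains is to exhibit, in each range, a $b$-chromatic coloring attaining them.

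In both ranges I would color the path and subdivision vertices by (\ref{eq_Pn}), i.e. $c(u_i)=(i+1)\bmod N$ and $c(s_{j,j+1})=j\bmod N$ with $N=\min\{m(P_n\boxdot K_t),\,n\}$, and then color the copies of $K_t$. The governing observation is that each copy of $K_t$ must carry $t$ pairwise distinct colors, and the copy $v_{i,\ast}$ hanging from an internal vertex $u_i$ is adjacent to $s_{i-1,i}$ and $s_{i,i+1}$, hence must avoid the two colors $(i-1)\bmod N$ and $i\bmod N$. When $n\le t+2$ (so $N=n$ and only $t$ of the $t+2$ colors remain for an internal copy), this forces every internal copy: $v_{i,\ast}$ must use exactly $\{0,\dots,t+1\}\setminus\{(i-1)\bmod n,\,i\bmod n\}$. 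A direct check then shows that every vertex of an internal copy is a $b$-vertex of its own color (its neighbors are the two omitted colors, supplied by $s_{i-1,i}$ and $s_{i,i+1}$, together with the remaining $t-1$ colors of the copy), and that each $s_{j,j+1}$ is a $b$-vertex of color $j$ (the copy attached to $u_{j+1}$ supplies all colors but $j$ and $(j+1)\bmod n$, and $u_j$ supplies $(j+1)\bmod n$); the two end copies are colored only so as to stay proper and, where needed, to carry one extra color seen by the adjacent end subdivision vertex. Together this yields $b$-vertices of all $t+2$ colors.

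For $t+3\le n\le 2t+3$ one has $N=n-1=\varphi$, and the subdivision vertices $s_{0,1},\dots,s_{n-2,n-1}$ already realise all $n-1$ colors, so a $b$-rainbow set will consist of exactly these. The problem then reduces to assigning to each copy $v_{i,\ast}$ a $t$-element color set $A_i\subseteq\{0,\dots,n-2\}$, avoiding $\{(i-1)\bmod(n-1),\,i\bmod(n-1)\}$ for internal $i$ (and the single forbidden color for $i\in\{0,n-1\}$), in such a way that for every $j$,
\[A_j\cup A_{j+1}\cup\{(j+1)\bmod(n-1),\,(j+2)\bmod(n-1)\}\ \supseteq\ \{0,\dots,n-2\}\setminus\{j\bmod(n-1)\}.\]
This is exactly the statement that $s_{j,j+1}$ sees every color but its own. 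I would meet it by a parity- and position-dependent rule that makes consecutive copies $A_j,A_{j+1}$ occupy (almost) complementary color blocks, so that their union misses only colors among $\{j-1,j,j+1\}$, which are then supplied by $u_j,u_{j+1}$, with separate corrections near $j=0$ and $j=n-1$, where only one subdivision vertex is adjacent, in the spirit of the formulas in Theorems~\ref{theorem_path} and~\ref{theorem_PnCt}. The boundary value $n=t+3$ (where $N=t+2<n$) fits this second scheme.

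The hard part is the extreme case $n=2t+3$: there $s_{j,j+1}$ has degree exactly $2t+2$ and must see $2t+1$ distinct colors, so its two adjacent copies $A_j,A_{j+1}$ can afford at most one common color while each simultaneously dodges its own shifting pair of forbidden colors — a naive alternating choice of the $A_i$ already fails here, since the forced overlap color then lands outside $\{j-1,j,j+1\}$. Producing one rule for $A_i$ that works uniformly over $t+3\le n\le 2t+3$ and reconciles the parity cases with the two end copies and with $n=t+3$ is where essentially all the bookkeeping lies; as in the earlier proofs I would expect to isolate a few small pairs $(n,t)$ and certify them by an explicit drawing.
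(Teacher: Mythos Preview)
Your overall plan coincides with the paper's: Proposition~\ref{proposition_2t4} disposes of $n>2t+3$; the $m$-degree computation from (\ref{eq_d}) together with Lemma~\ref{lemma1} yields the stated upper bounds; and the coloring of $V(P_n)\cup I(P_n)$ is fixed by (\ref{eq_Pn}). For $n\le t+2$ your argument is essentially complete and matches the paper's choice $c(v_{i,j})=(i+j+1)\bmod(t+2)$, which realises precisely the forced colour set $\{0,\dots,t+1\}\setminus\{(i-1)\bmod n,\,i\bmod n\}$ you describe; the paper takes its $b$-rainbow set to be $s_{0,1},\dots,s_{n-2,n-1}$ together with $v_{1,n-3},\dots,v_{1,t-1}$, drawn from one internal copy exactly as you argue.

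The gap is the range $t+3<n\le 2t+3$. You correctly isolate the combinatorial requirement on the sets $A_i$ and flag $n=2t+3$ as the tight case, but you stop short of producing a coloring; ``a parity- and position-dependent rule \dots\ in the spirit of'' earlier theorems is not yet a proof. The paper closes this with one explicit uniform rule, with no small $(n,t)$ handled separately: $c(v_{i,j})=(i+2j+3)\bmod(n-1)$ for $j<\lfloor(n-4)/2\rfloor$, a parity-shifted recursion for the middle indices, and $c(v_{i,t-1})=(i-2)\bmod(n-1)$ (respectively $(i+1)\bmod(n-1)$ when $t-1=1$). The arithmetic progression $i+3,i+5,\dots$ over the first block already makes consecutive $A_j,A_{j+1}$ nearly complementary, and the final coordinate plugs the remaining hole, so the anticipated case analysis at $n=2t+3$ is not needed. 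Until you supply such a rule (or an equivalent explicit assignment) and verify that each $s_{j,j+1}$ sees all other colours, the proposal for this range remains a sketch rather than a proof.
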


\begin{proof} Again, the case $n>2t+3$ follows from Proposition \ref{proposition_2t4}. So, we assume from now on that $n\leq 2t+3$. From Lemma \ref{lemma1} and (\ref{eq_d}), all the described values are upper bounds of the $b$-chromatic number under consideration. In order to see that they are reached, we define an appropriate $b$-chromatic coloring $c$ of $P_n\boxdot K_t$ satisfying (\ref{eq_Pn}). For each pair of non-negative integers $i<n$ and $j<t$, the following two cases arise.

If $n\leq t+3$, then $c(v_{i,j})=(i+j+1)\,\mathrm{mod}\,(t+2)$. A $b$-rainbow set is formed by the vertices $s_{0,1},\ldots, s_{n-2,n-1}$, together the vertices $v_{1,n-3},\ldots,v_{1,t-1}$. (Figure \ref{Fig_P6K4} illustrates the graph $P_6\boxdot K_4$.)

If $t+3<n\leq 2t+3$, then
    \[c(v_{i,j})=\begin{cases}
    \begin{array}{ll}
    (i+2j+3)\,\mathrm{mod}\, (n-1),& \text{ if } j< \left\lfloor\frac {n-4}2\right\rfloor,\\
    c\left(v_{((i+1)-i\,\mathrm{mod}\ 2)\,\mathrm{mod}\,(n-1),\,j-\left\lfloor\frac {n-4}2\right\rfloor}\right), & \text{ if }  \left\lfloor\frac {n-4}2\right\rfloor<j<t-1,\\
    (i+1)\,\mathrm{mod}\,(n-1),& \text{ if } j=t-1=1,\\
    (i-2)\,\mathrm{mod}\,(n-1),& \text{ if } j=t-1\neq 1.
    \end{array}
    \end{cases}\]
A $b$-rainbow set is formed by the vertices $s_{0,1},\ldots,s_{n-2,n-1}$. (Figure \ref{Fig_P6K2} illustrates the case $(n,t)\in\{(5,1),\,(6,2),\,(9,4)\}$.)
\end{proof}

\begin{figure}[ht]
\begin{center}
\includegraphics[scale=0.1]{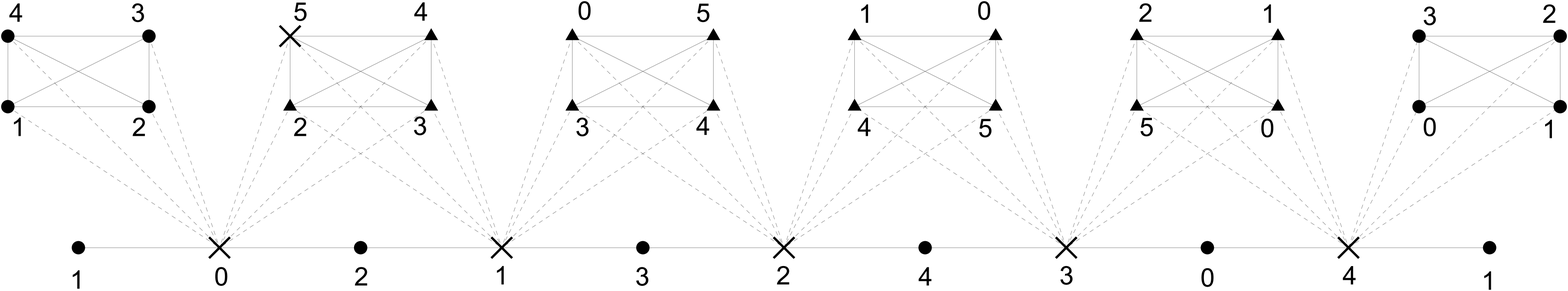}
\caption{Optimal $b$-chromatic coloring of $P_6\boxdot K_4$.}
\label{Fig_P6K4}
\end{center}
\end{figure}

\begin{figure}[ht]
\begin{center}
\begin{tabular}{c}
\includegraphics[scale=0.125]{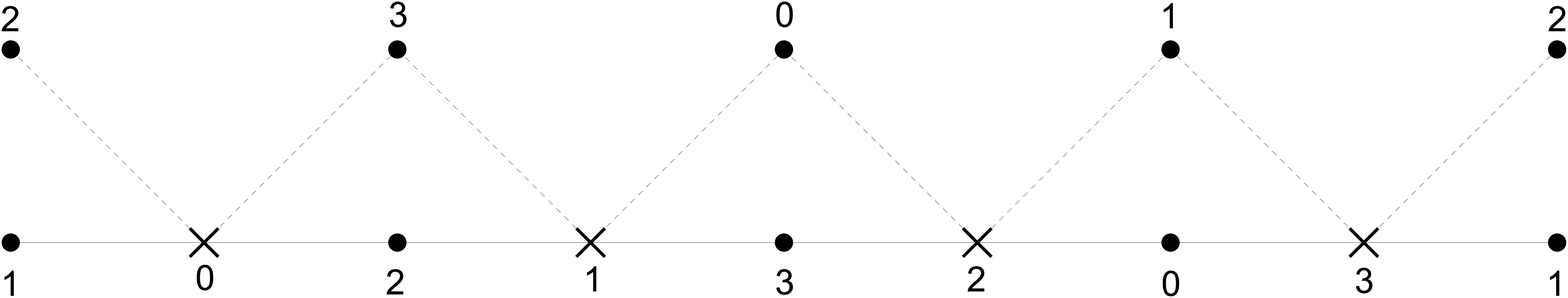}\\
\includegraphics[scale=0.1]{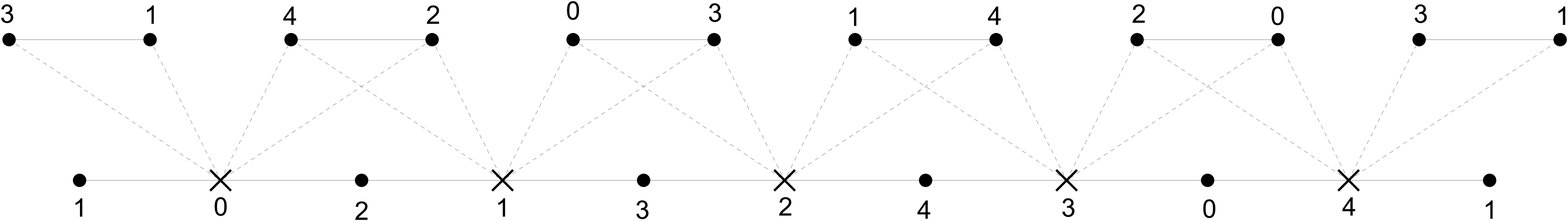}\\
\includegraphics[scale=0.0653]{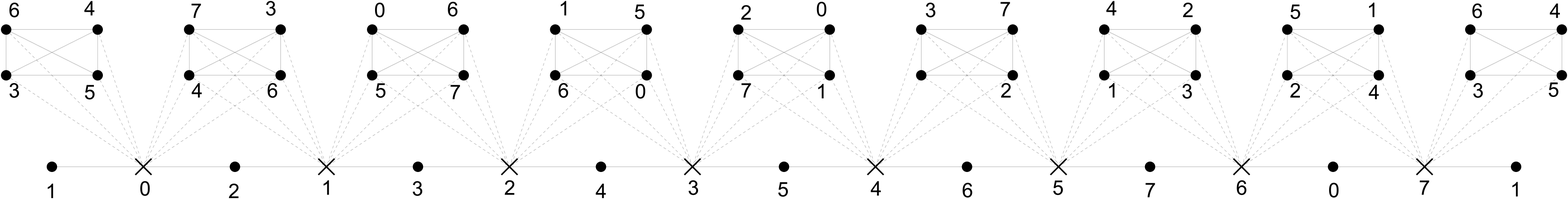}
\end{tabular}
\caption{Optimal $b$-chromatic colorings of $P_5\boxdot K_1$, $P_6\boxdot K_2$ and $P_9\boxdot K_4$.}
\label{Fig_P6K2}
\end{center}
\end{figure}

\section{SVN corona of cycles}\label{sec:cycle}

In this section, we determine the $b$-chromatic number of the SVN corona $C_n\boxdot G$ of a cycle $C_n=\langle\,u_0,\ldots,u_{n-1},\,u_0\,\rangle$, with $n>2$, and a graph $G\in\mathcal{G}$.  As a preliminary result, Proposition \ref{prop_2t4} enables us to study this number for the SVN corona $C_n\boxdot H$, for any arbitrary graph $H$ such that $n\geq \Delta(H)+3$.

\begin{proposition}\label{proposition_Cn_2t4} The following statements hold.
\begin{enumerate}
\item[(a)] $\varphi(C_n\boxdot H)\leq n$, whenever $\Delta(H)+3\leq n\leq 2\cdot |V(H)|+2$.

\vspace{0.2cm} 

\item[(b)] $\varphi(C_n\boxdot H)=2\cdot |V(H)|+3$, whenever $n>2\cdot |V(H)|+2$.
\end{enumerate}
\end{proposition}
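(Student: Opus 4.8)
The plan is to mimic closely the argument already used for $P_n\boxdot H$ in Proposition~\ref{proposition_2t4}, adjusting only for the fact that a cycle $C_n$ has $|I(C_n)| = n$ subdivision vertices (rather than $n-1$) and that $C_n$ is $2$-regular, so $\Delta(C_n)=2$. For part~(a), I would invoke Proposition~\ref{prop_2t4}(b.2): since $\Delta(C_n)=2 \le |V(H)|+3$ and $|I(C_n)| = n$, the hypothesis $\Delta(H)+3 \le n$ can be rewritten as $\Delta(C_n)+\Delta(H)+1 = \Delta(H)+3 \le n = |I(C_n)|$, while $n \le 2\cdot|V(H)|+2$ is exactly the other required inequality $|I(C_n)| \le 2\cdot|V(H)|+2$. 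Hence $\varphi(C_n\boxdot H) \le |I(C_n)| = n$, which is the claimed bound.

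For part~(b), the upper bound $\varphi(C_n\boxdot H) \le 2\cdot|V(H)|+3$ follows from Proposition~\ref{prop_2t4}(b.1), again using $\Delta(C_n)=2 < |V(H)|+3$. The substantive work is to exhibit, for every $n > 2\cdot|V(H)|+2$, a $b$-chromatic coloring of $C_n\boxdot H$ using exactly $2\cdot|V(H)|+3$ colors together with a $b$-rainbow set. The natural candidate, paralleling the path case, is to colour the structural vertices cyclically: set $c(u_i) = (i+1)\bmod(2\cdot|V(H)|+3)$, $c(s_{i,i+1}) = i\bmod(2\cdot|V(H)|+3)$ for the subdivision vertices around the cycle, and $c(v_{i,k}) = (i + 2k + 3)\bmod(2\cdot|V(H)|+3)$ for the copies of $H$; one then checks that the subdivision vertices $s_{0,1},\dots,s_{2\cdot|V(H)|+2,\,2\cdot|V(H)|+3}$ form a $b$-rainbow set, since each such $s_{j,j+1}$ has degree $2\cdot|V(H)|+2$ and sees all other colours among its two path-neighbours $u_j,u_{j+1}$ and the $2\cdot|V(H)|$ vertices of the $(j+1)$-th copy of $H$.

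The main obstacle — and the one point where the cyclic case genuinely differs from the path case — is the wrap-around constraint: since the indices of the $u_i$ and $s_{i,i+1}$ are taken modulo $n$, one must ensure that the colour assignment remains \emph{proper} at the seam where the cycle closes (in particular that $c(u_{n-1})$, $c(s_{n-1,0})$ and $c(u_0)$ are mutually consistent, and likewise for the incident copies of $H$), and that $n$ being not necessarily congruent to $0$ modulo $2\cdot|V(H)|+3$ does not create a monochromatic adjacency. This is handled by a small number of local case distinctions on $n \bmod (2\cdot|V(H)|+3)$, possibly recolouring a bounded number of vertices near the seam, exactly as the later sections of the paper do for the individual families $C_n\boxdot P_t$, $C_n\boxdot C_t$, etc.; the existence of enough ``free'' colours (guaranteed by $n > 2\cdot|V(H)|+2$, so there is slack) is what makes such a local repair always possible while keeping the $b$-rainbow set intact.
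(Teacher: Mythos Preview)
Your proposal is correct and follows essentially the same approach as the paper: part~(a) is exactly the paper's argument via Proposition~\ref{prop_2t4}(b.2), and for part~(b) the paper also recycles the path colouring of Proposition~\ref{proposition_2t4} and repairs the seam. The only refinement worth noting is that the seam fix is simpler than you anticipate: the paper recolours \emph{only} the single wrap-around subdivision vertex $s_{0,n-1}$, setting $c(s_{0,n-1}) = (n-1)\bmod(2|V(H)|+3)$ unless $n\equiv 2\pmod{2|V(H)|+3}$, in which case $c(s_{0,n-1})=0$; no other local repairs are needed, and the same $b$-rainbow set of consecutive subdivision vertices works.
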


\begin{proof}  The first statement follows readily from (b.2) in Proposition \ref{prop_2t4} once it is observed that $\Delta(C_n)=2$ and $|I(C_n)|=n$. Further, since $\Delta(C_n)=2<|V(H)|+3$, Proposition \ref{prop_2t4} implies that $\varphi(C_n\boxdot H)\leq 2\cdot |V(H)|+3$. In order to prove that this upper bound is reached, it is enough to consider the $b$-chromatic coloring of the graph $C_n\boxdot H$ that is defined as the coloring $c$ in the proof of Proposition \ref{prop_2t4}, except for
\[c(s_{0,n-1})=\begin{cases}
\begin{array}{ll}
(n-1)\,\mathrm{mod}\,(2\cdot|V(H)|+3),& \text{ if } n\not\equiv 2\,(\mathrm{mod}\,(2\cdot|V(H)|+3)),\\
0,&\text{ otherwise}.
\end{array}
\end{cases}\]
for every pair of non-negative integers $i<n$ and $k<|V(H)|$. A $b$-rainbow set is formed by the vertices $s_{0,1},\ldots,s_{2\cdot|V(H)|+2,2\cdot|V(H)|+3}$.
\end{proof}

\vspace{0.2cm}

Figure \ref{Fig_C10P3} illustrates Proposition \ref{proposition_Cn_2t4} for the graphs $C_{10}\boxdot P_3$ and $C_{11}\boxdot P_3$.

\begin{figure}[ht]
\begin{center}
\includegraphics[scale=0.08]{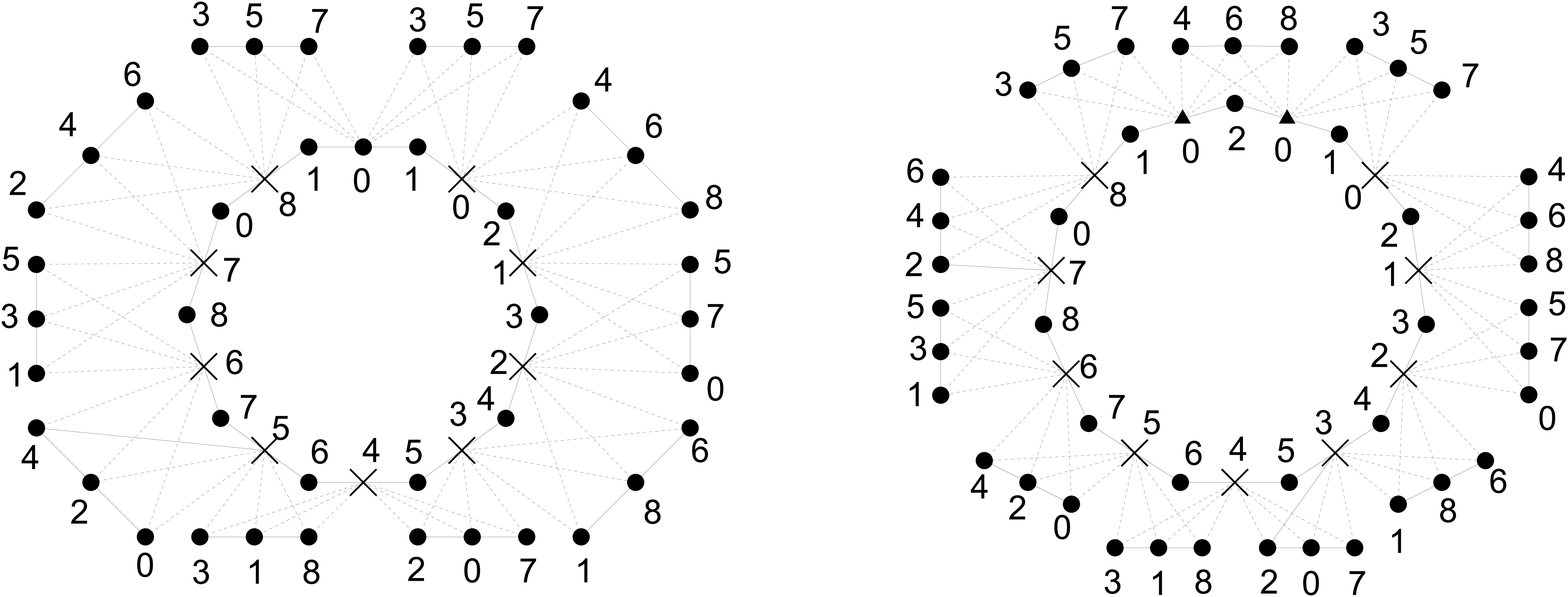}
\caption{Optimal $b$-chromatic colorings of $C_{10}\boxdot P_3$ and $C_{11}\boxdot P_3$.}
\label{Fig_C10P3}
\end{center}
\end{figure}

Now, we focus separately on each one of the mentioned graphs $C_n\boxdot G$, with $G\in\mathcal{G}$. In all the proofs, we define an appropriate  $b$-chromatic coloring $c$ of the graph $C_n\boxdot G$ satisfying (\ref{eq_Pn}) and
\begin{equation}\label{eq_Cn}
c(s_{0,n-1})=\begin{cases}
\begin{array}{ll}
(n-1)\,\mathrm{mod}\,m(C_n\boxdot G),& \text{ if } n\not\equiv 2\,(\mathrm{mod}\,m(C_n\boxdot G)),\\
0,&\text{ otherwise}.
\end{array}
\end{cases}
\end{equation}
We start by determining the $b$-chromatic number of the SVN corona of a cycle and a path.

\begin{theorem}\label{theorem_CtPn} Let $n>2$ and $t>2$ be two positive integers. Then,
\[\varphi(C_n \boxdot P_t)=\begin{cases}
\begin{array}{ll}
5,&\text{ if }  n\in\{3,4\},\\
n,  &\text{ if } 5\leq n\leq 2t+2,\\
2t+3,&\text{ otherwise}.
\end{array}
\end{cases}\]
\end{theorem}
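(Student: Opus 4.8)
The plan is to follow the same template used in Theorems \ref{theorem_path}--\ref{theorem_PnKt}, adapted to the cyclic base graph via \eqref{eq_Cn}. First I would dispose of the case $n > 2t+3$ by invoking part (b) of Proposition \ref{proposition_Cn_2t4} with $H = P_t$ (here $\Delta(P_t) = 2$ and $|V(P_t)| = t$), which immediately gives $\varphi(C_n\boxdot P_t) = 2t+3$. So from then on I assume $n \le 2t+3$. For the upper bounds, I would combine Lemma \ref{lemma1} with the degree formula \eqref{eq_d}: the $|I(C_n)| = n$ subdivision vertices have degree $2t+2$, the vertices $v_{i,j}$ have degree at most $2 + 2 = 4$ for interior $v_j$ and less for leaves, and the original cycle vertices have degree $2$; sorting these degrees and counting shows $m(C_n\boxdot P_t) = 5$ when $n \in \{3,4\}$ (the five vertices of degree $\ge 4$ being the $n\le 4$ subdivision vertices together with enough degree-$4$ copies of $P_t$-vertices) and $m(C_n\boxdot P_t) = n$ when $5 \le n \le 2t+2$. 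This matches the claimed formula, so it remains to construct colorings attaining each bound.

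For the constructive part, I would define $c$ on the cycle vertices $u_i$ and the subdivision vertices $s_{j,j+1}$ by \eqref{eq_Pn} with the modulus $\min\{m(C_n\boxdot P_t),\,n\}$, correct the single wrap-around edge $s_{0,n-1}$ using \eqref{eq_Cn}, and then specify $c(v_{i,j})$ for the copies of $P_t$. For $5 \le n \le 2t+2$ I expect a formula very close to the one in the $7 \le n \le 2t+3$ branch of Theorem \ref{theorem_path}, namely something like $c(v_{i,j}) = (i + 2j + 3)\bmod n$ for small $j$ with a parity-dependent shift for odd $n$ and a periodic repetition $c(v_{i,j}) = c(v_{i,j-2})$ once the available colors are exhausted — the $b$-rainbow set being exactly the $n$ subdivision vertices $s_{0,1},\ldots,s_{n-1,0}$, each of which sees all $n$ colors through its two cycle-neighbors and its $t$ attached $P_t$-copy vertices. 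For $n \in \{3,4\}$ I would borrow the $n \le 6$ branch of Theorem \ref{theorem_path}, e.g. $c(v_{i,j}) = (i + (j\bmod 3) + 1)\bmod 5$, and exhibit a $b$-rainbow set consisting of $s_{0,1},\ldots,s_{n-2,n-1}$ together with one suitably placed copy-vertex (such as $v_{1,1}$) to realize the fifth color as a $b$-vertex.

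In each case the verification has two halves: checking that $c$ is a proper coloring, and checking that every color has a $b$-vertex. Properness amounts to confirming that along each edge of $S(C_n)$ the values differ (this is where the wrap-around correction \eqref{eq_Cn} is essential, since the naive formula would collide at $s_{0,n-1}$ when $n \equiv 2 \pmod{m}$), and that each dashed edge from $N_{S(C_n)}(u_i) = \{s_{i-1,i}, s_{i,i+1}\}$ to a vertex $v_{i+1,k}$ avoids a monochromatic pair, plus the obvious internal checks within each $P_t$-copy. The $b$-vertex condition for the subdivision vertices is straightforward because each $s_{j,j+1}$ has degree $2t+2 \ge n-1$ and its neighborhood (two cycle vertices and $2t$ copy-vertices from the two incident $P_t$-copies) can be arranged to hit every residue. \textbf{The main obstacle} I anticipate is the low-$n$ regime $n \in \{3,4\}$ together with the boundary value $n = 2t+2$ versus $n = 2t+3$: when $n$ is small the modulus $\min\{m, n\}$ and the modulus $5$ governing the $v_{i,j}$ differ, so one must carefully track which of the five colors is witnessed where, and near $n = 2t+2$ the supply of degree-$4$ copy-vertices is just barely enough, leaving little slack in the periodic part $c(v_{i,j}) = c(v_{i,j-2})$; getting the parity bookkeeping right for odd $n$ so that no dashed edge becomes monochromatic is the delicate point, exactly as it was in the proof of Theorem \ref{theorem_path}.
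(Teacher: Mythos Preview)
Your overall strategy---invoke Proposition~\ref{proposition_Cn_2t4} for large $n$, read off the upper bounds from Lemma~\ref{lemma1} and \eqref{eq_d}, then build explicit colorings on the template \eqref{eq_Pn}--\eqref{eq_Cn} with the subdivision vertices as the $b$-rainbow set when $5\le n\le 2t+2$---is exactly what the paper does. Two points need correction, however.

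First, the large-$n$ cutoff: Proposition~\ref{proposition_Cn_2t4}(b) fires already for $n>2\lvert V(P_t)\rvert+2=2t+2$, not $n>2t+3$, so $n=2t+3$ is handled there and is not a boundary case you need to treat separately.

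Second, and more substantively, the plan to ``borrow the $n\le 6$ branch of Theorem~\ref{theorem_path}'' for $n\in\{3,4\}$ does not go through as stated. With $c(v_{i,j})=(i+(j\bmod 3)+1)\bmod 5$ and the wrap-around rule \eqref{eq_Cn}, one gets for $n=3$ that $c(s_{0,2})=2$ while $c(v_{0,1})=2$, and these two vertices are adjacent; so the coloring is not proper. Moreover your proposed $b$-rainbow set $\{s_{0,1},\dots,s_{n-2,n-1}\}\cup\{v_{1,1}\}$ has only $n$ elements, not five: for $n=3$ you are short by two $b$-vertices, for $n=4$ by one. The paper does \emph{not} recycle the path formula here; instead it writes down an ad~hoc assignment of $c(v_{i,j})$ for the finitely many relevant pairs $(i,j)$ (with $c(v_{i,j})=c(v_{i,j-2})$ beyond) and takes the $b$-rainbow set $\{s_{0,1},\dots,s_{n-2,n-1},v_{1,1},v_{2,1}\}$, augmented by $s_{0,2}$ when $n=3$. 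You correctly flagged the small-$n$ regime as the delicate spot, but the fix is a bespoke five-coloring rather than a transplanted one. For $5\le n\le 2t+2$ the paper's formula is indeed the cycle analogue of the $7\le n\le 2t+3$ branch of Theorem~\ref{theorem_path} (modulus $n$ rather than $n-1$), with the parity shift occurring when $n$ is \emph{even}, not odd.
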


\begin{proof}
The case $n>2t+2$ follows from Proposition \ref{proposition_Cn_2t4}. So, we assume from now on that $n\leq 2t+3$. From Lemma \ref{lemma1} and (\ref{eq_d}), all the described values are upper bounds of the $b$-chromatic number under consideration. In order to see that they are reached, we define an appropriate $b$-chromatic coloring $c$ of the graph $C_n\boxdot P_t$ satisfying (\ref{eq_Pn}) and (\ref{eq_Cn}). For each pair of non-negative integers $i<n$ and $j<t$, the following two cases arise. Here, we assume that $P_t=\langle\,v_0,\ldots,v_{t-1}\,\rangle$. 

If $n\in\{3,4\}$, then
 \[c(v_{i,j})=\begin{cases}
    \begin{array}{ll}
    0,&\text{ if } (i,j)\in\{(2,2),\,(3,0)\},\\
    1,&\text{ if } (i,j)\in\{(0,0),\,(3,1)\},\\
    2,&\text{ if } (i,j)=(1,0),\\
    3,& \text{ if } (i,j)\in\{(1,1),\,(2,0)\},\\
    4,& \text{ if } (i,j)\in\{(0,1),\,(1,2),\,(2,1)\},\\
    c(v_{i,j-2}),& \text{ otherwise}.
    \end{array}
    \end{cases}\]   
 A $b$-rainbow set is formed by the vertices $s_{0,1},\ldots,s_{n-2,n-1}, v_{1,1} ,v_{2,1}$, together with the vertex $s_{0,2}$, if $n=3$. (Figure \ref{Fig_C3P3} illustrates the graphs $C_3\boxdot P_3$ and $C_4\boxdot P_4$.)

\begin{figure}[ht]
\begin{center}
\includegraphics[scale=0.09]{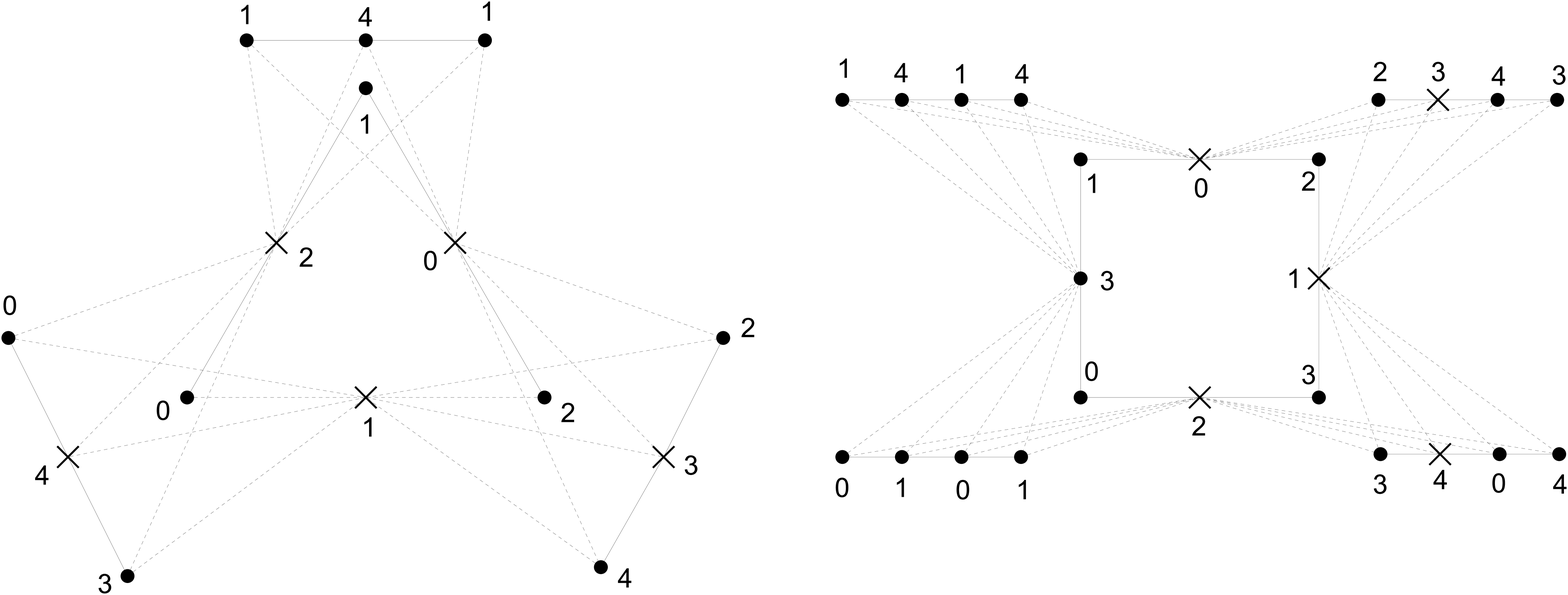}
\caption{Optimal $b$-chromatic colorings of $C_3\boxdot P_3$ and $C_4\boxdot P_4$.}
\label{Fig_C3P3}
\end{center}
\end{figure}
    
Further, if $5\leq n\leq 2t+2$, then $c(v_{i,j})$ is
         \[\begin{cases}
    \begin{array}{ll}
 i+1, & \text{ if }  (j,n)=(1,5),\\
 (i+2j+3)\,\mathrm{mod}\,n, & \text{ if }  n \text{ is odd and }j<\frac {n-3}2,\\
 
        \left(i+2j+(4-2(i\,\mathrm{mod}\,2))\right)\,\mathrm{mod}\,(n-1), & \text{ if } \begin{cases}
        n, i \text{ are even}, \text{ and } j<\left\lfloor\,\frac {n-3}2\,\right\rfloor,\\
        n, i+1 \text{ are even, and } j<\left\lceil\,\frac {n-3}2\,\right\rceil,
        \end{cases}\\
        (i+2)\,\mathrm{mod}\,6,&\text{ if } (j,n)=(1,6) \text{ and } i \text{ is even},\\
        c(v_{i,j-2}),&\text{ otherwise}.
    \end{array}
    \end{cases}\]
    A $b$-rainbow set is formed by the vertices $s_{0,1},\ldots,s_{n-2,n-1}, s_{0,n-1}$. (Figure \ref{Fig_C5P3} illustrates the graphs $C_5\boxdot P_3$ and $C_6\boxdot P_3$.)
\end{proof}

\begin{figure}[ht]
\begin{center}
\includegraphics[scale=0.11]{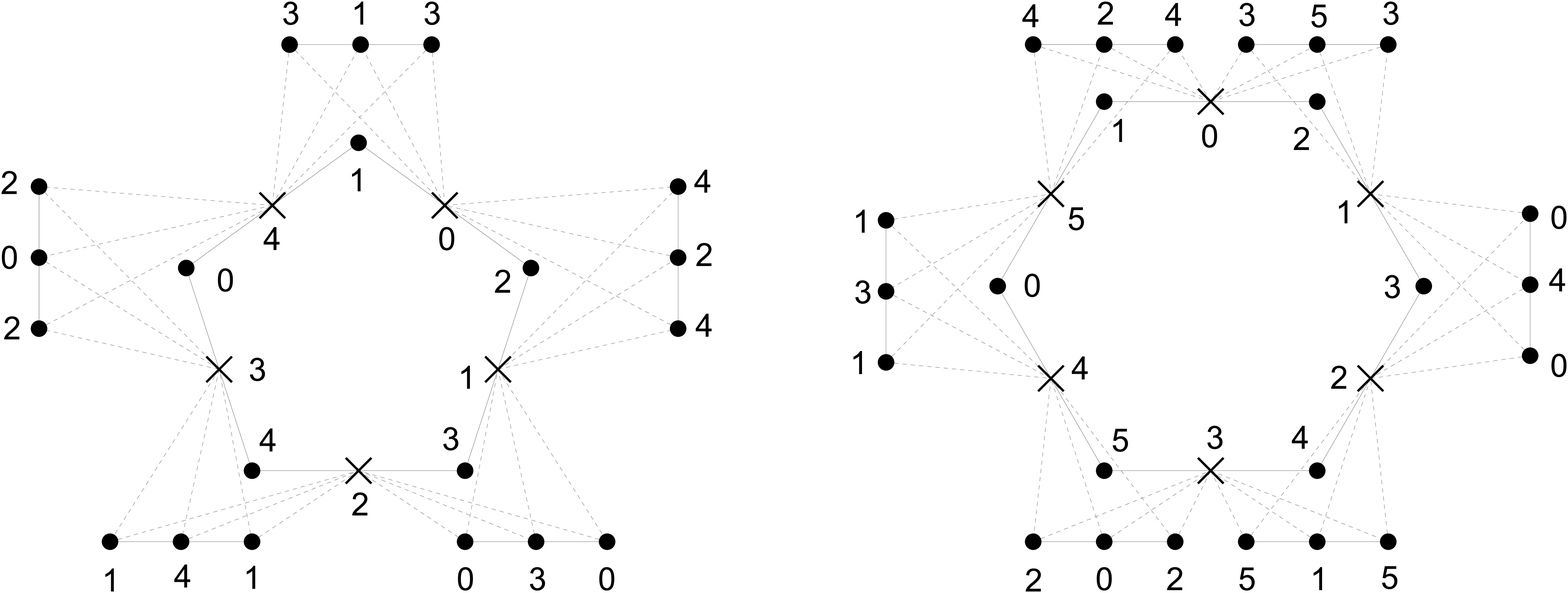}
\caption{Optimal $b$-chromatic colorings of $C_5\boxdot P_3$ and $C_6\boxdot P_3$.}
\label{Fig_C5P3}
\end{center}
\end{figure}

The next graph to study is the SVN corona of two cycles.

\begin{theorem}\label{theorem_CnCt} Let $n>2$ and $t>2$ be two positive integers. Then,
\[\varphi\left(C_n \boxdot C_t\right) =
\begin{cases}
\begin{array}{ll}
5,&\text{ if }  n\in\{3,4\},\\
n,  &\text{ if } 5\leq n\leq 2t+2,\\
2t+3,&\text{ otherwise}.
\end{array}
\end{cases}\]
\end{theorem}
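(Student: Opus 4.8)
The plan is to mirror the structure of the proof of Theorem~\ref{theorem_CtPn}, since the upper bound argument is identical and the only real work is to supply an explicit optimal $b$-chromatic coloring of $C_n\boxdot C_t$ in the remaining ranges. First I would dispatch the case $n>2t+2$ by invoking Proposition~\ref{proposition_Cn_2t4}(b), which gives $\varphi(C_n\boxdot C_t)=2t+3$ directly. For $n\le 2t+2$, Lemma~\ref{lemma1} together with the degree formula (\ref{eq_d}) shows that the claimed values are upper bounds: the vertices of $I(C_n)$ have degree $2t+2$, there are $n$ of them, and the next-largest degree is $\Delta(C_n)+\Delta(C_t)=4$, so $m(C_n\boxdot C_t)=5$ when $n\in\{3,4,5\}$ and $m(C_n\boxdot C_t)=n$ when $n\ge 6$ (exactly as in (\ref{eq_PC}), adjusted for $|I(C_n)|=n$). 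This already pins down everything from above.

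Next I would construct the coloring. On the ``skeleton'' $S(C_n)$ I use the standard pattern (\ref{eq_Pn}) for $c(u_i)$ and $c(s_{j,j+1})$, with the wrap-around edge coloured via (\ref{eq_Cn}); the $b$-rainbow set will again be $\{s_{0,1},\ldots,s_{n-2,n-1},s_{0,n-1}\}$ (for $n\ge 5$) or a slight variant including vertices of one copy of $C_t$ (for $n\in\{3,4\}$), just as in Theorem~\ref{theorem_CtPn}. For the copies of $C_t$, the idea is to reuse, almost verbatim, the coloring $c(v_{i,j})$ from the proof of Theorem~\ref{theorem_CtPn} for $C_n\boxdot P_t$, because a path $P_t=\langle v_0,\ldots,v_{t-1}\rangle$ and a cycle $C_t=\langle v_0,\ldots,v_{t-1},v_0\rangle$ differ only in the single extra edge $v_{t-1}v_0$. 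So I would take the Theorem~\ref{theorem_CtPn} formulas and then patch the colour of $v_{i,t-1}$ (and possibly $v_{i,0}$) in the small-$n$ regimes where the edge $v_{i,t-1}v_{i,0}$ would create a conflict — the analogue of the $j=t-1$ exceptions already appearing in Theorems~\ref{theorem_PnCt} and~\ref{theorem_CtPn}. Concretely, for $n\in\{3,4\}$ I would give a finite explicit table as in the $C_n\boxdot P_t$ proof, changing only the entries on the last ``rung'' so that $c(v_{i,t-1})\ne c(v_{i,0})$; for $5\le n\le 2t+2$ I would add a clause of the form $c(v_{i,t-1})=$ (something like $i+1$ or $i+3$ mod $n$, chosen case-by-parity in $n$) overriding the generic value, exactly paralleling the ``$n=8$, $t$ odd'' / ``$n\in\{7,9\}$, $t$ odd'' fixes in Theorem~\ref{theorem_PnCt}.

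The main obstacle, and where the bulk of the verification lies, is checking that this patched map is still a \emph{proper} coloring and still exhibits a full $b$-rainbow set. Properness must be re-checked on three families of edges touched by the patch: the cycle edges $v_{i,t-1}v_{i,0}$ within each copy (this is the new edge relative to the path case and the whole reason for the patch), the subdivision-to-copy edges joining $v_{i,t-1}$ and $v_{i,0}$ to $u_i$, $s_{i-1,i}$, $s_{i,i+1}$, and the ``ladder'' edges $v_{i,j}v_{i,j+1}$ adjacent to the modified indices $j=t-1,t-2,0,1$. Because $c(u_i)$ and $c(s_{*,*})$ only use the residues $0,\ldots,n-1$ while the copy-colours in the generic ranges are designed to avoid exactly the forbidden residues at each position, the patch values must be picked from the same ``safe'' residue classes; verifying that such a choice exists for every $(n,t)$ with $n\le 2t+2$ — including the boundary cases $n\in\{5,6,7\}$ where the generic formula already has special sub-cases — is the delicate bookkeeping. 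The $b$-rainbow part is comparatively easy: the vertices $s_{0,1},\ldots,s_{n-2,n-1},s_{0,n-1}$ each have degree $2t+2\ge n-1$ (resp.\ $\ge 4$), their neighbourhoods in $S(C_n)$ plus an attached copy of $C_t$ of order $t\ge 3$ are large and colour-rich enough to see all other colours, and one finishes exactly as in Theorems~\ref{theorem_CtPn} and~\ref{theorem_PnCt} by exhibiting, in the accompanying figure, a small copy where the remaining colours appear in the neighbourhood of these $b$-vertices.
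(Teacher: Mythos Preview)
Your proposal is correct and follows essentially the same approach as the paper: dispatch $n>2t+2$ via Proposition~\ref{proposition_Cn_2t4}, obtain the upper bounds from Lemma~\ref{lemma1} and the degree formula~(\ref{eq_d}), and then reuse the $C_n\boxdot P_t$ coloring of Theorem~\ref{theorem_CtPn} with small local patches to handle the extra cycle edge $v_{i,t-1}v_{i,0}$, keeping the same $b$-rainbow sets. The only cosmetic difference is where the patch is applied: the paper modifies $c(v_{i,2})$ for $n\in\{3,4,5,6,7,8\}$ (reseeding the period-$2$ ``otherwise'' recursion so the corrected value propagates to $v_{i,t-1}$ automatically), rather than altering $c(v_{i,t-1})$ directly as you suggest.
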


\begin{proof}
The case $n>2t+2$ follows from Proposition \ref{proposition_Cn_2t4}. So, we assume from now on that $n\leq 2t+2$. From Lemma \ref{lemma1} and (\ref{eq_d}), all the described values are upper bounds of the $b$-chromatic number under consideration. In order to see that they are reached, it is enough to consider the same map $c$ defined in the proof of Theorem \ref{theorem_CtPn}, except for

 \[c(v_{i,j})=\begin{cases}
    \begin{array}{ll}
    2,&\text{ if } (i,j,n)=(0,2,4),\\
    4,&\text{ if } (i,j,n)=(3,2,4),\\
    3,&\text{ if } (i,j,n)=(0,2,3),\\
    (i+2)\,\mathrm{mod}\,n, &\text{ if } j=2 \text{ and } n\in\{5,7\},\\ 
    c(v_{(i+(-1)^{i\,\mathrm{mod}\,2})\,\mathrm{mod}\,n,0}), & \text{ if } j=2 \text{ and } n\in\{6,8\}.
    \end{array}
    \end{cases}\]
    Here, we have assumed that $C_t=\langle\,v_0,\ldots,v_{t-1},\,v_0\,\rangle$. The same $b$-rainbow sets described in the proof of Theorem \ref{theorem_CtPn} are valid here. (Figure \ref{Fig_C5C3} illustrates the graphs $C_5\boxdot C_3$ and $C_6\boxdot C_3$.)
\end{proof}

\begin{figure}[ht]
\begin{center}
\includegraphics[scale=0.1]{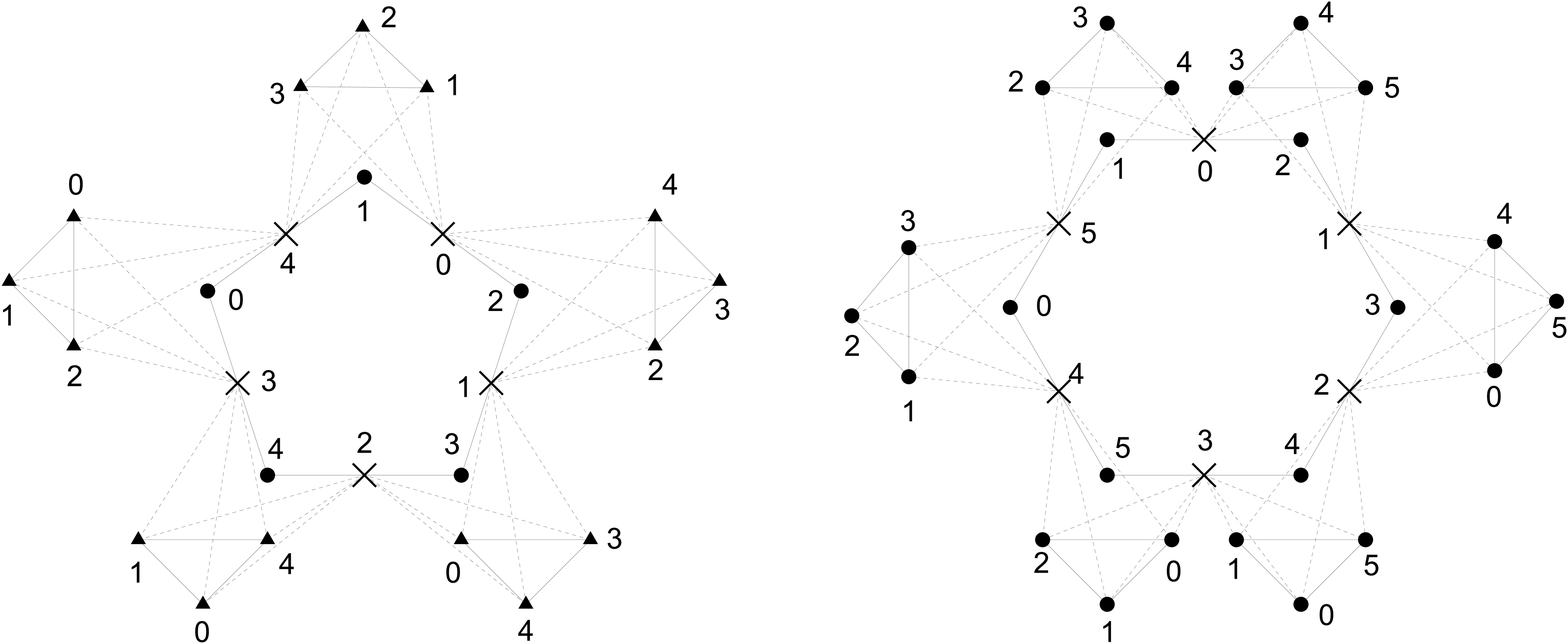}
\caption{Optimal $b$-chromatic colorings of $C_5\boxdot C_3$ and $C_6\boxdot C_3$.}
\label{Fig_C5C3}
\end{center}
\end{figure}

Now, we study the SVN corona of a cycle and a star.  (Again, recall that $S_t$ is the star of order $t+1$.)

\begin{theorem}\label{theorem_CnSt}
Let $n>2$ and $t>2$ be two positive integers. Then,
\[\varphi(C_n\boxdot S_t)=\begin{cases}
\begin{array}{ll}
2n,&\text{ if } n\leq \lfloor\frac {t+3}2\rfloor,\\
t+3,&\text{ if } \lfloor\frac {t+3}2\rfloor< n \leq t+2,\\
n,&\text{ if } t+3\leq n\leq 2t+4,\\
2t+5, & \text{ otherwise}.
\end{array}
\end{cases}\]
\end{theorem}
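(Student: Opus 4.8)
The plan is to follow the same template as the earlier SVN-corona results in the paper. The range $n>2t+4$ needs nothing new: since $|V(S_t)|=t+1$, Proposition~\ref{proposition_Cn_2t4}(b) gives at once $\varphi(C_n\boxdot S_t)=2(t+1)+3=2t+5$. So assume from now on that $n\leq 2t+4$; the task then splits into reading off the claimed upper bounds and exhibiting $b$-chromatic colorings that meet them.

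For the upper bounds I would compute $m(C_n\boxdot S_t)$ from the degree formula (\ref{eq_d}). The graph has exactly $n$ subdivision vertices of degree $2(t+1)+2=2t+4$, $n$ star centers $v_{i,0}$ of degree $2+t$, $nt$ star leaves of degree $2+1=3$, and $n$ cycle vertices of degree $2$; since $t>2$, ordering the vertices by non-increasing degree places the subdivision vertices in positions $1,\ldots,n$, the centers in positions $n+1,\ldots,2n$, then the leaves, then the cycle vertices. Counting the positions $i$ with $d(v_{i-1})\geq i-1$ gives $m(C_n\boxdot S_t)=2n$ when $n\leq\lfloor\frac{t+3}2\rfloor$ (all $n$ centers count, since then $2n\leq t+3$), $m(C_n\boxdot S_t)=t+3$ when $\lfloor\frac{t+3}2\rfloor<n\leq t+2$ (exactly the $t+3-n$ centers in positions $\leq t+3$ count), and $m(C_n\boxdot S_t)=n$ when $t+3\leq n\leq 2t+4$ (no center and no leaf counts). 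By Lemma~\ref{lemma1} these are upper bounds for $\varphi(C_n\boxdot S_t)$ matching the claimed values; alternatively, the bound $n$ in the last range is Proposition~\ref{proposition_Cn_2t4}(a).

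For the lower bounds I would, in each of the three remaining ranges, first color the backbone $S(C_n)$, i.e.\ the vertices $u_i$ and $s_{i,i+1}$, by the cyclic rule (\ref{eq_Pn}) with the corrective rule (\ref{eq_Cn}) for the seam vertex $s_{n-1,0}$, taking the modulus equal to the target number $k=m(C_n\boxdot S_t)\in\{2n,\,t+3,\,n\}$ of colors. This already makes the coloring proper on $S(C_n)$ and forces the $n$ subdivision vertices to receive the $n$ distinct colors $0,1,\ldots,n-1$. It then remains to color the $n$ copies of $S_t$: I would assign each center $v_{i,0}$ a color different from $c(s_{i-1,i})$ and $c(s_{i,i+1})$, and distribute the remaining colors among the leaves $v_{i,j}$ ($1\leq j\leq t$) of the $(i+1)$-th copy so that the coloring stays proper (a leaf is adjacent only to $v_{i,0}$, $s_{i-1,i}$ and $s_{i,i+1}$) and so that every subdivision vertex $s_{i,i+1}$, whose neighborhood is $\{u_i,u_{i+1}\}\cup\{v_{i,j}\colon 0\leq j\leq t\}\cup\{v_{i+1,j}\colon 0\leq j\leq t\}$, sees all $k$ colors; in the first two ranges I would additionally make the designated centers see all $k$ colors among their $t+2$ neighbors (their $t$ leaves together with $s_{i-1,i}$ and $s_{i,i+1}$). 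The $b$-rainbow set is then the $n$ subdivision vertices $s_{0,1},\ldots,s_{n-2,n-1},s_{n-1,0}$, together with: all $n$ centers $v_{0,0},\ldots,v_{n-1,0}$ when $n\leq\lfloor\frac{t+3}2\rfloor$ (giving $2n$ colors); a suitably chosen set of $t+3-n$ centers when $\lfloor\frac{t+3}2\rfloor<n\leq t+2$ (giving $t+3$); and nothing further when $t+3\leq n\leq 2t+4$.

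The main obstacle is the bookkeeping hidden in ``distribute the remaining colors among the leaves''. First, the cyclic color pattern on the cycle need not close up evenly, so one must handle separately the parity of $n$ and the residue of $n$ modulo $k$ at the seam $s_{n-1,0}$ --- which is exactly what (\ref{eq_Cn}) is designed to absorb --- and the leaves of the one or two copies straddling that seam have to be adjusted to match. Second, in the middle range $\lfloor\frac{t+3}2\rfloor<n\leq t+2$ there is no room to make all $n$ centers $b$-vertices with only $t+3$ colors (indeed the leaves of a $b$-vertex center are then forced to use all $t$ colors outside $\{c(v_{i,0}),c(s_{i-1,i}),c(s_{i,i+1})\}$), so one must single out exactly $t+3-n$ centers, give them the missing colors $n,n+1,\ldots,t+2$, and still fill the leaves of every copy --- including those whose center is not a $b$-vertex --- so that all $n$ subdivision vertices remain $b$-vertices. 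Producing explicit, case-by-case formulas $c(v_{i,j})$ that are simultaneously proper and rainbow-complete at every designated vertex, across the three ranges and their parity subcases, is the real content; the illustrative figures for small $(n,t)$ serve as consistency checks.
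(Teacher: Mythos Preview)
Your proposal is correct and follows essentially the same approach as the paper: the case $n>2t+4$ via Proposition~\ref{proposition_Cn_2t4}(b), upper bounds in the remaining ranges via the $m$-degree computation and Lemma~\ref{lemma1}, and lower bounds via explicit colorings satisfying (\ref{eq_Pn}) and (\ref{eq_Cn}) whose $b$-rainbow sets consist of the $n$ subdivision vertices together with (all, some, or none of) the star centers $v_{i,0}$, according to the range. The paper supplies precisely the case-by-case formulas for $c(v_{i,j})$ that you correctly flag as ``the real content'' but do not write out.
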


\begin{proof}  The case $n>2t+4$ follows from Proposition \ref{proposition_Cn_2t4}. So, we assume that $n\leq 2t+4$. From Lemma \ref{lemma1} and (\ref{eq_d}), all the described values are upper bounds of the $b$-chromatic number. To see that they are reached, we define the $b$-chromatic coloring $c$ of $C_n\boxdot S_t$ satisfying (\ref{eq_Pn}) and (\ref{eq_Cn}) such that, for each pair of non-negative integers $i<n$ and $j\leq t$, the following two cases arise. Here, we assume that $V(S_t)=\{v_0,\ldots,v_t\}$, where $v_0$ is the center of the star.  

If $n\leq t+2$, then let $\alpha_{n,t}=m(C_n\boxdot S_t)-n$, then
    \[c(v_{i,j})=\begin{cases}
    \begin{array}{ll}
     n+((i+j)\,\mathrm{mod}\,\alpha_{n,t}), & \text{ if } j<\alpha_{n,t},\\
    (j-\alpha_{n,t}+1)\,\mathrm{mod}\, n,& \text{ if } \alpha_{n,t}\leq j<\alpha_{n,t} + n-2 \text{ and } i=0,\\
    (j-\alpha_{n,t})\,\mathrm{mod}\, n,& \text{ if } \alpha_{n,t}\leq j<\alpha_{n,t} + n-2 \text{ and } i=n-1,\\
       (i+j-\alpha_{n,t}+1)\,\mathrm{mod}\, n,& \text{ if } \alpha_{n,t}\leq j<\alpha_{n,t} + n-3 \text{ and } i\not\in\{0,n-1\},\\
    c(v_{i,j-1}),&\text{ otherwise}.
    \end{array}
    \end{cases}\]
    A $b$-rainbow set is formed by the vertices $s_{0,1},\ldots, s_{n-2,n-1},s_{0,n-1}$, together with either the vertices $v_{0,0},\ldots, v_{n-1,0}$, if $n\leq \lfloor\frac {t+3}2\rfloor$; or the vertices $v_{1,1},\ldots,v_{t+3-n,1}$, if $\lfloor\frac {t+3}2\rfloor< n \leq t+2$.
    
Further, if $t+3\leq n\leq 2t+5$, then
 \[c(v_{i,j})=\begin{cases}
    \begin{array}{ll}
    (i+3)\,\mathrm{mod}\,n,& \text{ if } j=0,\\
    (i-j-2)\,\mathrm{mod}\,n, & \text{ if } i \text{ is even and } 0<j\leq\min\{n-5 , t\},\\
    (i+j+3)\,\mathrm{mod}\,n, & \text{ if } i \text{ is odd and } 0<j\leq\min\{n-5 , t\},\\
    c(v_{i,j-1}), & \text{ otherwise}.
    \end{array}
    \end{cases}\]
    A $b$-rainbow set is formed by the vertices $s_{0,1},\ldots, s_{n-2,n-1},s_{0,n-1}$.
\end{proof}

\vspace{0.2cm}

Figure \ref{Fig_C3S3} illustrates Theorem \ref{theorem_CnSt} for the graphs $C_3\boxdot S_3$, $C_5\boxdot S_3$ and $C_7\boxdot S_3$.

\begin{figure}[ht]
\begin{center}
\includegraphics[scale=0.085]{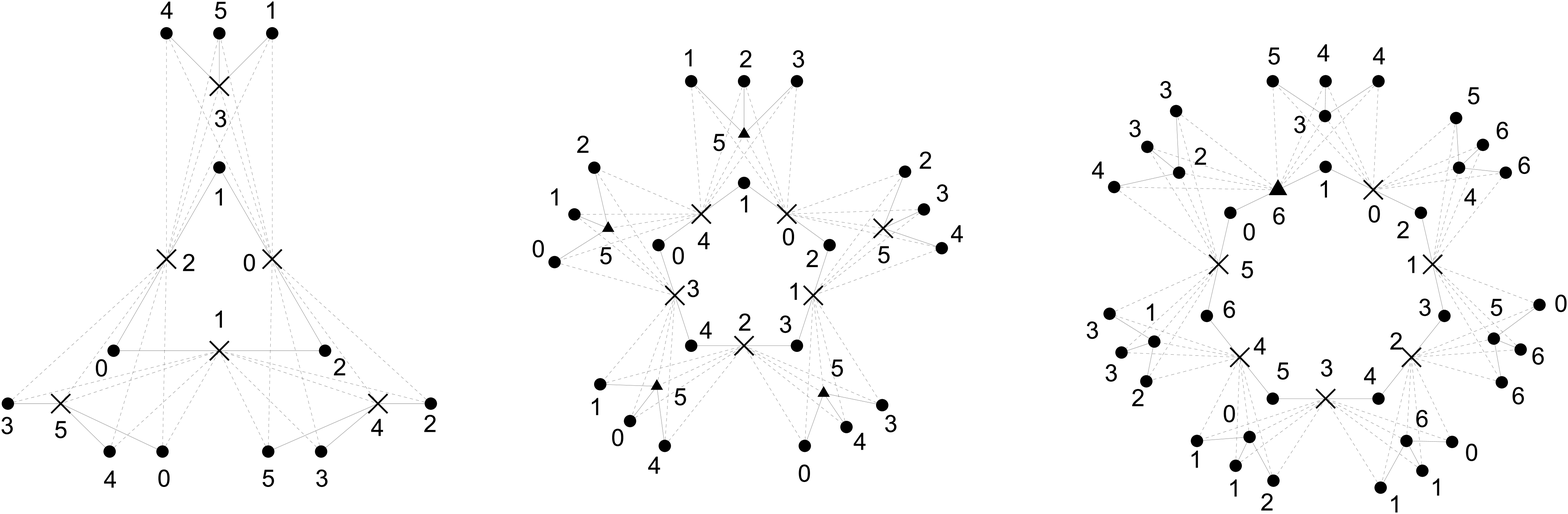}
\caption{Optimal $b$-chromatic coloring of $C_n\boxdot S_3$, for all $n\in\{3,5,7\}$.}
\label{Fig_C3S3}
\end{center}
\end{figure}

\vspace{0.5cm}

Finally, we focus on the SVN corona of a cycle and a complete graph.

\begin{theorem}\label{theorem_CnKt} Let $n>2$ and $t$ be two positive integers. Then,
\[\varphi(C_n\boxdot K_t)=\begin{cases}
\begin{array}{ll}
t+2,& \text{ if } n\leq t+1,\\
n,& \text{ if } t+2\leq n\leq 2t+3,\\
2t+3, & \text{ otherwise}.
\end{array}
\end{cases}\]
\end{theorem}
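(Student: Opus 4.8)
The plan is to follow exactly the template established in the previous theorems of this section, adapting it to the complete graph $K_t$. First I would dispose of the large-$n$ regime $n>2t+3$: here $|V(K_t)|=t$, so $2\cdot|V(K_t)|+2=2t+2$, and Proposition~\ref{proposition_Cn_2t4}(b) gives $\varphi(C_n\boxdot K_t)=2t+3$ immediately. This handles the last case. For the remaining range $n\leq 2t+3$, the upper bounds come from Lemma~\ref{lemma1} together with the degree formula (\ref{eq_d}): one computes $m(C_n\boxdot K_t)$. The $|I(C_n)|=n$ inserted vertices have degree $2t+2$; the copies of $K_t$ contribute vertices of degree $d_{C_n}(u_i)+d_{K_t}(v_j)=2+(t-1)=t+1$; and the $n$ original cycle vertices have degree $2$. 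Ordering these by degree, when $n\leq t+1$ the $m$-degree is governed by the $nt$ vertices of degree $t+1$ and turns out to be $t+2$; when $t+2\leq n\leq 2t+3$ the $n$ vertices of degree $2t+2$ dominate and the $m$-degree is $n$. This should be a short arithmetic verification parallel to (\ref{eq_PC}).

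Next I would establish that these upper bounds are attained by an explicit construction. Following the scheme fixed at the start of the section, the coloring $c$ on $S(C_n)$ is prescribed by (\ref{eq_Pn}) and (\ref{eq_Cn}): the cycle vertices $u_i$ and the subdivision vertices $s_{j,j+1}$ receive colors in $\{0,\ldots,m-1\}$ with $m=\min\{m(C_n\boxdot K_t),n\}$ arranged so that consecutive subdivision vertices see all colors. The only thing left to design is $c(v_{i,j})$ on the copies of $K_t$, and this is where the two subcases $n\leq t+1$ and $t+2\leq n\leq 2t+3$ diverge. For $n\leq t+1$ I expect the assignment $c(v_{i,j})=(i+j+1)\bmod(t+2)$, mirroring the path case in Theorem~\ref{theorem_PnKt} — one checks that within a single copy the $t$ values $c(v_{i,0}),\ldots,c(v_{i,t-1})$ are pairwise distinct (so the clique is properly colored), that they avoid the colors of the neighbours in $S(C_n)$, i.e.\ $c(u_i)$, $c(u_{i+1})$, $c(s_{i-1,i})$, $c(s_{i,i+1})$, and that there is a $b$-rainbow set of size $t+2$. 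For $t+2\leq n\leq 2t+3$ the construction should be the cyclic analogue of the second displayed formula in the proof of Theorem~\ref{theorem_PnKt}, with the $\bmod\,(n-1)$ replaced by $\bmod\,n$ (since a cycle, unlike a path, has $n$ inserted vertices) and possibly with a correction term on $c(s_{0,n-1})$ dictated by (\ref{eq_Cn}); the $b$-rainbow set is then $\{s_{0,1},\ldots,s_{n-2,n-1},s_{0,n-1}\}$, all $n$ subdivision vertices, each of which sees every other color.

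The main obstacle — as in all the theorems of this section — is verifying that the piecewise definition of $c(v_{i,j})$ is simultaneously a proper coloring and supplies the required $b$-vertices, especially at the boundary cases and at the "wrap-around" edge $u_0u_{n-1}$ where the cyclic structure forces the exceptional value of $c(s_{0,n-1})$ in (\ref{eq_Cn}). Concretely, one must check: (i) properness inside each copy of $K_t$, which for the formula $(i+j+1)\bmod(t+2)$ needs $t\leq t+2$ distinct residues and is automatic, but for the second subcase requires care where the two sub-formulas meet at $j=\lfloor(n-4)/2\rfloor$; (ii) properness across the dashed edges joining a copy of $K_t$ to the four relevant vertices of $S(C_n)$; (iii) that no two adjacent copies-of-$K_t$ conflict — they are non-adjacent in $C_n\boxdot K_t$, so this is vacuous, which simplifies matters relative to, say, the lexicographic product; and (iv) that each claimed $b$-vertex genuinely has neighbours of all other colors. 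I would organize (iv) by noting that every subdivision vertex $s_{j,j+1}$ is adjacent to the entire $(j+1)$-th and $(j+2)$-th copies of $K_t$ plus $u_j,u_{j+1}$, giving it degree $2t+2$, so it can realize up to $2t+3$ colors; the coloring is rigged precisely so the colors on $\{u_j,u_{j+1}\}\cup v_{j,\bullet}\cup v_{j+1,\bullet}$ exhaust $\{0,\ldots,m-1\}\setminus\{c(s_{j,j+1})\}$. Small cases ($t=1$, $t=2$, and $n$ near the thresholds $t+1$, $t+2$, $2t+3$) would need to be inspected individually, and the illustrative figures for $C_n\boxdot K_t$ analogous to Figures~\ref{Fig_P6K4}--\ref{Fig_P6K2} would be cited to confirm them.
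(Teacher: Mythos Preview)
Your plan is essentially the paper's own proof: invoke Proposition~\ref{proposition_Cn_2t4} for $n>2t+3$, read off the upper bounds from Lemma~\ref{lemma1} and~(\ref{eq_d}), and on the constructive side reuse the formulas for $c(v_{i,j})$ from the proof of Theorem~\ref{theorem_PnKt} inside the framework~(\ref{eq_Pn})--(\ref{eq_Cn}), with $b$-rainbow set $\{s_{0,1},\ldots,s_{n-2,n-1},s_{0,n-1}\}$ when $t+2\leq n\leq 2t+3$.

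Two details to correct. First, in the SVN corona the vertex $v_{i,j}$ is joined only to $N_{S(C_n)}(u_i)=\{s_{i-1,i},\,s_{i,i+1}\}$, \emph{not} to $u_i$ or $u_{i+1}$; your list of four forbidden colors is too large (harmless for properness, but it matters when you count colors in a neighborhood to certify a $b$-vertex). Second, for $n\leq t+1$ the path formula $c(v_{i,j})=(i+j+1)\bmod(t+2)$ does \emph{not} survive the wrap-around intact: since $c(s_{0,n-1})=n-1$ by~(\ref{eq_Cn}) and $c(v_{0,n-2})=(n-1)\bmod(t+2)=n-1$ by the formula, these two adjacent vertices collide. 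The paper patches this with the single exceptional value $c(v_{0,n-2})=t$. In this same range the $b$-rainbow set must include clique vertices --- the paper takes $s_{0,1},\ldots,s_{n-2,n-1}$ together with $v_{1,n-3},\ldots,v_{1,t-1}$ --- since the $n\leq t+1$ subdivision vertices alone cannot furnish $t+2$ distinct colors.
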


\begin{proof} The case $n>2t+3$ follows from Proposition \ref{proposition_Cn_2t4}. So, we assume that $n\leq 2t+3$. From Lemma \ref{lemma1} and (\ref{eq_d}), all the described values are upper bounds of the $b$-chromatic number. To see that they are reached, we define an appropriate $b$-chromatic coloring $c$ of  $C_n\boxdot K_t$ satisfying (\ref{eq_Pn}) and (\ref{eq_Cn}). For each pair of non-negative integers $i<n$ and $j<t$, we define $c(v_{i,j})$ as in the proof of Theorem \ref{theorem_PnKt}, except for $c(v_{0,n-2})=t$, if $n\leq t+1$. In this last case, a $b$-rainbow set is formed by the vertices $s_{0,1},\ldots, s_{n-2,n-1},\,v_{1,n-3},\ldots,v_{1,t-1}$. Otherwise, if $t+1< n\leq 2t+3$, then a $b$-rainbow set is formed by $s_{0,1},\ldots, s_{n-2,n-1},s_{0,n-1}$. (Figure \ref{Fig_C5K4} illustrates the graphs $C_5\boxdot K_4$ and $C_6\boxdot K_4$.)
\end{proof}

\begin{figure}[ht]
\begin{center}
\includegraphics[scale=0.1]{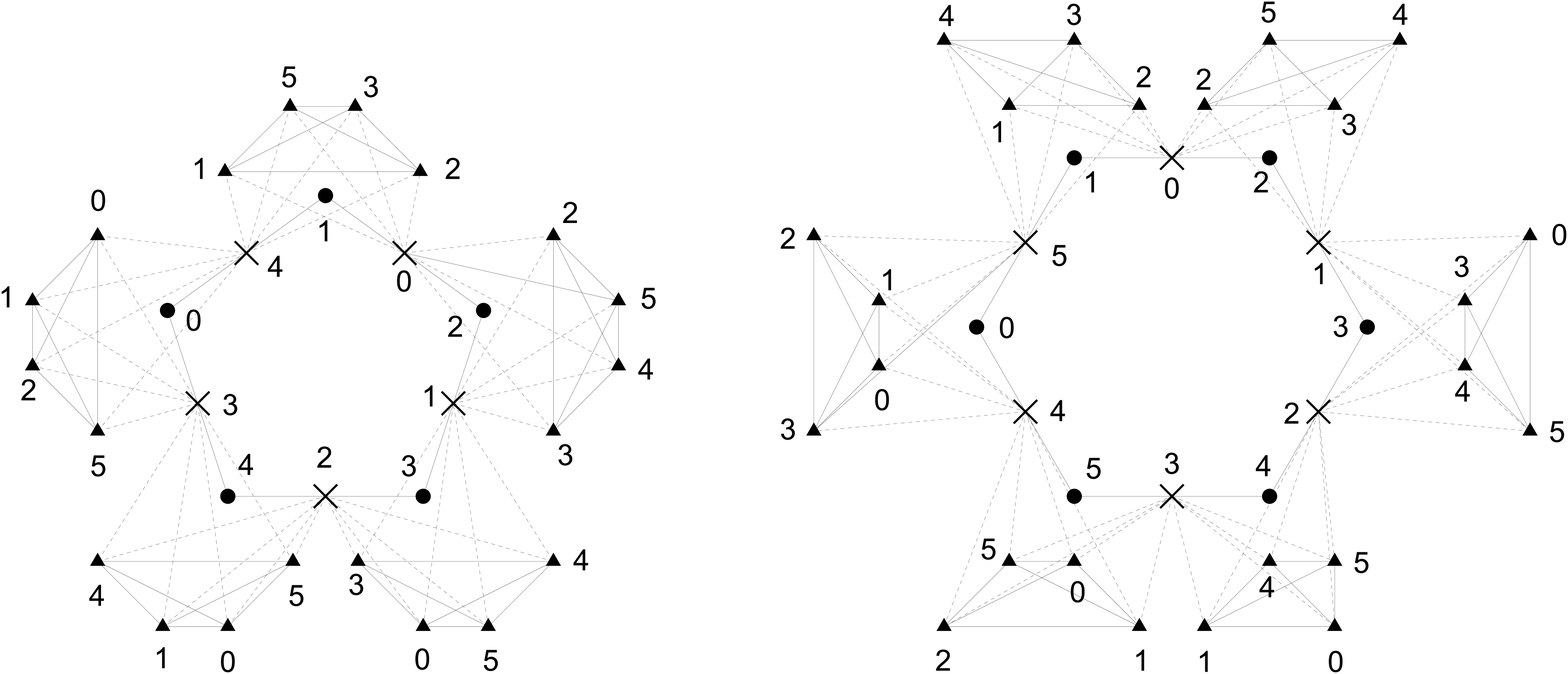}
\caption{Optimal $b$-chromatic colorings of $C_5\boxdot K_4$ and $C_6\boxdot K_4$.}
\label{Fig_C5K4}
\end{center}
\end{figure}

\section{SVN corona of stars}\label{sec:star}

In this section, we determine the $b$-chromatic number of the SVN corona of a star $S_n$, with $n>2$, of set of vertices $V(S_n)=\{u_0,\ldots,u_n\}$, where $u_0$ is the center, with a graph $G\in\mathcal{G}$.  As a preliminary result, we determine this number for the SVN corona $S_n\boxdot H$, for any arbitrary graph $H$, with $\Delta(H)+1<\min\{n,\,|V(H)|+2\}+\varphi(H)$.

\begin{lemma} \label{lemma_Sn_LB} Let $n>2$ be a positive integer and let $H$ be a graph of order $t\geq 1$ such that $\Delta(H)+1<\varphi(S_n\boxdot H)$. Then, $\varphi(S_n\boxdot H)=\min\{n,\,|V(H)|+2\}+\varphi(H)$.
\end{lemma}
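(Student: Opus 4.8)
The plan is to prove the equality by establishing the two inequalities $\varphi(S_n\boxdot H)\geq\min\{n,\,t+2\}+\varphi(H)$ and $\varphi(S_n\boxdot H)\leq\min\{n,\,t+2\}+\varphi(H)$ separately, where $t=|V(H)|$; the lower bound holds for all $n$ and $H$, and the hypothesis $\Delta(H)+1<\varphi(S_n\boxdot H)$ is used only for the upper bound. I would first fix notation for $G:=S_n\boxdot H$: write $S=I(S_n)=\{s_{0,1},\ldots,s_{0,n}\}$, let $H_1$ be the copy of $H$ attached to the center $u_0$, and let $H_{i+1}$ be the copy attached to the leaf $u_i$ (for $1\leq i\leq n$). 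The structural facts I would rely on, all immediate from the definition of the SVN corona and from $(\ref{eq_d})$, are: $S$ is an independent set completely joined to $H_1$; each $H_{i+1}$ together with the leaf $u_i$ is attached to the rest of $G$ only through the single vertex $s_{0,i}$; and the degrees are $d_G(s_{0,i})=2t+2$, $d_G(u_0)=n$, the vertices of $H_1$ have degree $n+d_H(v_j)$, the vertices of $H_{i+1}$ with $i\geq 1$ have degree $d_H(v_j)+1\leq\Delta(H)+1$, and $d_G(u_i)=1$.

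For the upper bound, set $k:=\varphi(S_n\boxdot H)$ and fix an optimal $b$-chromatic coloring with a $b$-vertex $b_c$ for each color $c$; the hypothesis gives $k\geq\Delta(H)+2$, hence $k\geq 3$ once $H$ has an edge (the edgeless case $\Delta(H)=0$ being treated directly). Every $b$-vertex has degree $\geq k-1>\Delta(H)$, so no leaf is a $b$-vertex and, apart from the boundary situation $k=\Delta(H)+2$ (in which a maximum-degree vertex of some $H_{i+1}$ could also qualify), every $b$-vertex lies in $\{u_0\}\cup S\cup V(H_1)$. Let $a$ be the number of colors appearing on $H_1$. The heart of the argument is the case where some $b$-vertex lies in $H_1$: reading off its neighborhood $N_{H_1}(\cdot)\cup S$ and using the complete join shows that the palettes of $H_1$ and of $S$ are disjoint and together exhaust $\{0,\ldots,k-1\}$, so they partition the colors; in particular $c(u_0)$ lies in the palette of $H_1$ (since $u_0$ is adjacent to all of $S$), every color of $H_1$ is necessarily $b$-witnessed inside $H_1$ (it cannot be witnessed on $S$, where it does not occur, nor at $u_0$ unless $a\leq 1$), so the restriction of the coloring to $H_1\cong H$ is a $b$-chromatic coloring of $H$ and $a\leq\varphi(H)$. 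Each of the $k-a$ remaining colors is then witnessed by some $b_c=s_{0,i}\in S$, whence $k-a\leq|S|=n$; and, fixing one such $i$, the $k-a-1$ colors absent from $H_1$ other than $c$ must all occur on $\{u_0,u_i\}\cup V(H_{i+1})$, but $c(u_0)$ belongs to the $H_1$-palette and so contributes nothing here, leaving $t+1$ vertices to carry $k-a-1$ colors and giving $k-a\leq t+2$. Combining, $k\leq a+\min\{n,t+2\}\leq\varphi(H)+\min\{n,t+2\}$. The leftover cases — no $b$-vertex in $H_1$, which forces $a=1$ and reduces to a direct count in the style of the edgeless case, and the boundary case $k=\Delta(H)+2$, where a $b$-vertex inside some $H_{i+1}$ already forces all $k$ colors onto $V(H_{i+1})\cup\{s_{0,i}\}$, hence $k\leq t+1$, after which one compares with $m(S_n\boxdot H)$ — are disposed of by short direct arguments.

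For the lower bound I would construct a $b$-chromatic coloring of $S_n\boxdot H$ with $k_0:=\min\{n,t+2\}+\varphi(H)$ colors, treating the two regimes $n\leq t+2$ and $n\geq t+2$ (which is what the $\min$ distinguishes). Color $H_1$ by a fixed optimal $b$-chromatic coloring of $H$ using the colors $\{0,\ldots,\varphi(H)-1\}$, put $c(u_0)=0$, and reserve the $\min\{n,t+2\}$ ``fresh'' colors $\{\varphi(H),\ldots,k_0-1\}$. Assign the fresh colors injectively to $\min\{n,t+2\}$ of the vertices $s_{0,i}$; for such an $s_{0,i}$ of fresh color $f$, color $V(H_{i+1})$ by a proper coloring of $H$ that omits $f$ and whose palette contains a prescribed set of $\min\{n,t+2\}-2$ of the other fresh colors (padding with colors $<\varphi(H)$ when needed, which is possible because $\min\{n,t+2\}-2\leq t$ and $\chi(H)\leq t$), and give $u_i$ the last missing fresh color; then $s_{0,i}$ sees every fresh color $\neq f$ on $\{u_i\}\cup V(H_{i+1})$ and every color $<\varphi(H)$ on $H_1$, so it is a $b$-vertex. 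The at most $\max\{0,n-t-2\}$ leftover vertices $s_{0,i}$, together with their copies and leaves, are colored properly by recycling one fixed fresh color. Finally, every color $r<\varphi(H)$ is $b$-witnessed inside $H_1$, since the vertex of $H_1$ that is a $b$-vertex for $r$ in the chosen coloring of $H$ sees all other colors $<\varphi(H)$ inside $H_1$ and, being adjacent to all of $S$, sees every fresh color. Here a $b$-rainbow set consists of the $\min\{n,t+2\}$ chosen vertices $s_{0,i}$ together with the $\varphi(H)$ vertices of $H_1$ that are $b$-vertices of $H$ for the coloring used there.

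The step I expect to be the main obstacle is the sharpness of the upper bound, namely obtaining $\min\{n,t+2\}$ rather than $\min\{n,t+3\}$: this rests on the observation that once $H_1$ carries a $b$-vertex the color of $u_0$ is pinned to the $H_1$-palette and hence is useless for a subdivision vertex trying to collect the colors missing from $H_1$. Interlocking this with the boundary case $\varphi(S_n\boxdot H)=\Delta(H)+2$ — in which a vertex of a non-central copy of $H$ may itself be a $b$-vertex, so that one must invoke the $m$-degree to rule out the configurations that would contradict the formula — is the delicate part; everything else is routine verification that the displayed coloring is proper and that the indicated vertices satisfy the $b$-vertex condition.
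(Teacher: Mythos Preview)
Your upper-bound argument --- pinning the palette of $H_1$, showing that its restriction is a $b$-coloring of $H$ so that $a\leq\varphi(H)$, and then bounding the $S$-palette by $\min\{n,t+2\}$ via a single subdivision $b$-vertex together with the key observation that $c(u_0)$ already lies in the $H_1$-palette --- is correct and considerably more explicit than the paper's, which compresses the whole upper bound into one sentence invoking Lemma~\ref{lemma1} on the graph with $u_0$ and $V(H_1)$ deleted. Your lower-bound construction is essentially the paper's.

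The genuine gap is exactly where you locate it: the boundary case $k=\varphi(S_n\boxdot H)=\Delta(H)+2$. Your proposed resolution (``one compares with $m(S_n\boxdot H)$'') does not close it, and in fact \emph{cannot}, because the statement as written is false in this regime. Take $n=3$ and $H=S_5$: then $|V(H)|=6$, $\Delta(H)=5$, $\varphi(H)=2$, and a direct check gives $m(S_3\boxdot S_5)=7$ together with a $b$-chromatic $7$-coloring whose $b$-rainbow set is $\{s_{0,1},s_{0,2},s_{0,3},v_{0,0},v_{1,0},v_{2,0},v_{3,0}\}$; the centers $v_{i,0}$ of the three non-central copies have degree $\Delta(H)+1=6=k-1$ and legitimately serve as $b$-vertices. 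Thus $\varphi(S_3\boxdot S_5)=7$, the hypothesis $\Delta(H)+1=6<7$ holds, yet $\min\{n,|V(H)|+2\}+\varphi(H)=5\neq 7$. The paper's proof contains the identical gap: its opening assertion that ``the vertex $v_{i,j}$ cannot be a $b$-vertex'' actually needs $\Delta(H)+1<\varphi(S_n\boxdot H)-1$, not merely $<\varphi(S_n\boxdot H)$. The text immediately preceding the lemma suggests the intended hypothesis is $\Delta(H)+1<\min\{n,|V(H)|+2\}+\varphi(H)$; under \emph{that} hypothesis your argument closes, since then either $\varphi>\Delta(H)+2$ and your main case applies, or $\varphi=\Delta(H)+2$, which combined with your unconditional lower bound forces the desired equality directly.
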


\begin{proof} Since $\Delta(H)+1<\varphi(S_n\boxdot H)$, the vertex $v_{i,j}$ cannot be a $b$-vertex of $S_n\boxdot H$. Thus, every $b$-rainbow set of $S_n\boxdot H$ is formed by a subset of vertices of the form $s_{i,j}$, together with either the vertex $u_0$ or a subset of vertices of the form $v_{0,k}$. (Observe to this end that the remaining vertices $u_i$, with $0<i<n$, have degree one; and also that the vertex $u_0$ is not adjacent to any vertex of the form $v_{0,k}$.)

Then, the required result of being upper bound follows readily from Lemma \ref{lemma1} applied to the graph obtained after removing the vertices $u_0,\,v_{0,0},\ldots,v_{0,|V(H)|-1}$ from the graph $S_n\boxdot H$, together with the fact that every vertex $s_{i,j}$ is adjacent to $u_0$ and every vertex $v_{0,k}$.

Now, in order to prove that the described upper bound is reached, we define an appropriate $b$-chromatic coloring $c$ of $S_n\boxdot H$. To this end, let $\alpha_{n,|V(H)|}=\min\{n,\,|V(H)|+2\}$ and let $c':V(H)\rightarrow \{0,\ldots,\varphi(H)-1\}$ be an optimal $b$-chromatic coloring of the graph $H$. Then, we define $c(u_0)=0$ and $c(v_{0,k})=c'(v_k)$, for every non-negative integer $k<|V(H)|$. In addition, for each positive integer $i\leq n$, we define $c(s_{0,i})=\varphi(H)+((i-1)\,\mathrm{mod}\,\alpha_{n,|V(H)|})$ and $c(u_i)=\varphi(H)+(i\,\mathrm{mod}\,\alpha_{n,|V(H)|})$. Furthermore, we have from Brooks' Theorem \cite{Brook1941} that every proper coloring of the vertices $v_{i,0},\ldots,v_{i,|V(H)|-1}$ requires at least either $\Delta(H)$ or $\Delta(H)+1$ distinct colors. These vertices can always be colored by using all the colors of the set $\{\varphi(H),\ldots,\varphi(H)+\alpha_{n,|V(H)|}-1\}\setminus\{c(s_{0,i}),\,c(u_i)\}$, together with, at most, the colors $0,\ldots,\Delta(H)+2-n$ in case of being $n\leq \Delta(H)+2$.
\end{proof}

\vspace{0.2cm}

Figure \ref{Fig_S4P3} illustrates the previous result for the graphs $S_n\boxdot P_3$, with $n\in\{3,4,5\}$.

\begin{figure}[ht]
\begin{center}
\includegraphics[scale=0.065]{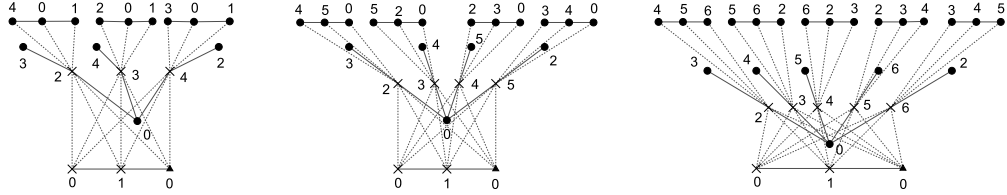}
\caption{Optimal $b$-chromatic colorings of $S_n\boxdot P_3$, for all $n\in\{3,4,5\}$.}
\label{Fig_S4P3}
\end{center}
\end{figure}

In addition, the following lemma establishes a lower bound for the $b$-chromatic number of the graph $S_n\boxdot H$, where $H$ is an arbitrary graph.

\begin{lemma} \label{lemma_Sn_LB_4} Let $n>2$ be a positive integer and let $H$ be any graph. Then, $\varphi(S_n\boxdot H)\geq 4$.
\end{lemma}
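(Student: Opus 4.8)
The plan is to split into three cases according to the structure of $H$: $H$ not bipartite, $H$ edgeless, and $H$ bipartite with at least one edge. If $H$ is not bipartite, then $\chi(H)\geq 3$ and the argument is immediate: the subgraph of $S_n\boxdot H$ induced by $s_{0,1}$ together with the first copy of $H$, namely $\{v_{0,0},\ldots,v_{0,|V(H)|-1}\}$, is isomorphic to the join $K_1+H$, since $s_{0,1}$ is adjacent to every vertex of that copy and the only other edges among those vertices are the ones inherited from $H$; hence $\chi(S_n\boxdot H)\geq\chi(K_1+H)=\chi(H)+1\geq 4$, and Lemma~\ref{lemma0} gives $\varphi(S_n\boxdot H)\geq\chi(S_n\boxdot H)\geq 4$.

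If $H$ is edgeless, then $\Delta(H)=0$, and since $S_n\boxdot H$ contains the edge $u_0s_{0,1}$ we have $\varphi(S_n\boxdot H)\geq\chi(S_n\boxdot H)\geq 2>1=\Delta(H)+1$. Thus the hypothesis of Lemma~\ref{lemma_Sn_LB} is met, and that lemma gives $\varphi(S_n\boxdot H)=\min\{n,\,|V(H)|+2\}+\varphi(H)=\min\{n,\,|V(H)|+2\}+1\geq 4$, because $n\geq 3$ and $|V(H)|\geq 1$. (Alternatively, one may write down a small explicit colouring here, but routing through Lemma~\ref{lemma_Sn_LB} is shorter.)

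The substantive case is when $H$ is bipartite with at least one edge; there I would exhibit a proper colouring $c\colon V(S_n\boxdot H)\to\{0,1,2,3\}$ in which each of the four colours has a $b$-vertex. Fix a bipartition $V(H)=A\cup B$ and an edge $v_av_b$ of $H$ with $v_a\in A$ and $v_b\in B$. Colour every vertex $v_{i,k}$, for all $0\leq i\leq n$, with $2$ if $v_k\in A$ and with $3$ if $v_k\in B$; then set $c(s_{0,1})=0$, $c(s_{0,2})=1$, $c(s_{0,i})=0$ for $i\geq 3$, $c(u_0)=2$, $c(u_1)=1$, $c(u_2)=0$, and $c(u_i)=1$ for $i\geq 3$. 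Properness is a routine verification, using that each $v_{i,k}$ meets the vertices of $S(S_n)$ only in $s_{0,i}$ when $i\geq 1$ and only in $s_{0,1},\ldots,s_{0,n}$ when $i=0$, and that the only neighbours of $u_0$ are $s_{0,1},\ldots,s_{0,n}$. The four $b$-vertices can be taken to be $s_{0,1}$, $s_{0,2}$, $v_{0,a}$ and $v_{0,b}$: indeed $s_{0,1}$ (colour $0$) sees colour $1$ at $u_1$ and colours $2$ and $3$ inside the first copy of $H$; symmetrically $s_{0,2}$ is a $b$-vertex of colour $1$; $v_{0,a}$ (colour $2$) sees colours $0$ and $1$ at $s_{0,1}$ and $s_{0,2}$ and colour $3$ at its $H$-neighbour $v_{0,b}$; and symmetrically $v_{0,b}$ is a $b$-vertex of colour $3$. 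This shows $\varphi(S_n\boxdot H)\geq 4$.

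I expect the main obstacle to be the realisation that no single explicit colouring works for every $H$, forcing the above trichotomy: when $\chi(H)\geq 3$ a $b$-colouring with exactly four colours need not even exist, so the bound has to come from the chromatic number; and when $H$ is edgeless the copies of $H$ can no longer supply two of the four $b$-vertices, so that branch is handled through Lemma~\ref{lemma_Sn_LB}. Once the three cases are separated, each is short, and the only genuine computations are the properness and $b$-vertex checks in the bipartite-with-edge case.
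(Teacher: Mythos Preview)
Your proof is correct and follows essentially the same trichotomy as the paper, splitting according to whether $\chi(H)\geq 3$, $\chi(H)=1$, or $\chi(H)=2$. The only notable difference is in the edgeless case: the paper writes down a small explicit $4$-colouring, whereas you route through Lemma~\ref{lemma_Sn_LB}; your shortcut is valid since $\varphi(S_n\boxdot H)\geq\chi(S_n\boxdot H)\geq 2>1=\Delta(H)+1$ verifies that lemma's hypothesis, and the resulting value $\min\{n,|V(H)|+2\}+1\geq 4$ follows from $n\geq 3$ and $|V(H)|\geq 1$. In the bipartite-with-edge case your explicit colouring and the paper's (which starts from an arbitrary proper $3$-colouring and recolours one subdivision vertex) are minor variants of one another, producing the same $b$-rainbow set $\{s_{0,1},s_{0,2},v_{0,a},v_{0,b}\}$.
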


\begin{proof} It is readily verified that $\chi(S_n\boxdot H)=\chi(H)+1$. Thus, if $\chi(H)\geq 3$, then the result follows straightforwardly from Lemma \ref{lemma0}. So, we may assume that $\chi(H)\in\{1,2\}$. It is enough to prove the existence of a $b$-rainbow set of four $b$-vertices in both cases. If $\chi(H)=1$, then let $c$ be the proper $4$-coloring of $S_n\boxdot H$ that is defined so that, for every non-negative integer $i\in\{0,1,2\}$, it is
$c(s_{0,i})=i$, $c(u_i)=(i+1)\,\mathrm{mod}\,3$ and $c(v_{i,0})=(i+2)\,\mathrm{mod}\,3$. Any other vertex $v$ is colored as $c(v)=3$. Then, the set of vertices $\{s_{0,0},\,s_{0,1},\,s_{0,2},\,u_0\}$ is a $b$-rainbow set of $S_n\boxdot H$.

Furthermore, if $\chi(H)=2$, then we may assume without loss of generality that $v_{0,0}$ and $v_{0,1}$ are adjacent. In addition, let $c':V(S_n\boxdot H)\rightarrow \{0,1,2\}$ be a proper $3$-coloring of $S_n\boxdot H$. Then, let $c''$ be the proper $4$-coloring of $S_n\boxdot H$ that is defined so that, $c''(s_{0,0})=c''(u_1)=3$, and $c''(v)=c'(v)$, otherwise. As such, the set of vertices $\{s_{0,0},\,s_{0,1},\,v_{0,0},\,v_{0,1}\}$ is a $b$-rainbow set of $S_n\boxdot H$.
\end{proof}

\vspace{0.2cm}

As an immediate consequence of the previous two results, the following theorem establishes the $b$-chromatic number of the SVN corona of a star with either a path, a cycle, or a complete graph.

\begin{theorem}\label{theorem_SnPt} Let $n>2$, $t>2$ and $t'$ be three positive integers. Then,
\[\varphi(S_n\boxdot P_t)=\begin{cases}
\begin{array}{ll}
n+2, &\text{ if } \begin{cases}
t=3\text{ and } n\in\{3,4,5\},\\
t=4\text{ and } n\in\{3,4,5,6\},
\end{cases}\\
n+3, &\text{ if } n\leq t+2 \text{ and } t>4,\\
t+4,&\text{ if } \begin{cases}
t=3 \text{ and } n>5,\\
t=4 \text{ and } n>6,
\end{cases}\\
t+5, &\text{ if } n>t+2>6.
\end{array}
\end{cases}\]

\[\varphi(S_n\boxdot C_t)=\begin{cases}
\begin{array}{ll}
n+2, & \text{ if } n=4,\\
n+3, & \text{ if } n\leq t+2, \text{ with } n\neq 4,\\
t+5, & \text{ if } n>t+2.
\end{array}
\end{cases}\]
and
\[\varphi(S_n\boxdot K_{t'})=\min\{n,t'+2\}+t'.\]
\end{theorem}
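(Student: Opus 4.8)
The plan is to derive all three formulas in Theorem~\ref{theorem_SnPt} as immediate corollaries of Lemmas~\ref{lemma_Sn_LB} and~\ref{lemma_Sn_LB_4}, by verifying in each case the hypothesis $\Delta(H)+1<\varphi(S_n\boxdot H)$ that unlocks the closed form $\varphi(S_n\boxdot H)=\min\{n,\,|V(H)|+2\}+\varphi(H)$. The overall structure is: first treat $H=P_t$, then $H=C_t$, then $H=K_{t'}$, and in each case (i) record $\Delta(H)$, $|V(H)|$ and $\varphi(H)$ from Proposition~\ref{proposition_Kouider}, (ii) check whether $\Delta(H)+1$ is strictly less than the candidate value $\min\{n,|V(H)|+2\}+\varphi(H)$, and (iii) when the check succeeds, read off the value; when it fails, fall back on Lemma~\ref{lemma_Sn_LB_4} to get the bound $\varphi(S_n\boxdot H)\geq 4$ and pin down the exact value separately.

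For $H=K_{t'}$ the argument is cleanest and I would do it first conceptually: $|V(K_{t'})|=t'$, $\Delta(K_{t'})=t'-1$, and $\varphi(K_{t'})=t'$, so $\Delta(H)+1=t'<t'+\min\{n,t'+2\}=\min\{n,t'+2\}+\varphi(H)$ holds unconditionally (since $n>2$ forces $\min\{n,t'+2\}\geq 1$, indeed $\geq\min\{3,t'+2\}\geq 1$). Hence Lemma~\ref{lemma_Sn_LB} applies with no case distinction and gives $\varphi(S_n\boxdot K_{t'})=\min\{n,t'+2\}+t'$ directly. For $H=P_t$ with $t>2$: $|V(P_t)|=t$, $\Delta(P_t)=2$, and $\varphi(P_t)=2$ if $t\in\{3,4\}$, $\varphi(P_t)=3$ if $t\geq 5$; the candidate value is $\min\{n,t+2\}+\varphi(P_t)$. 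When $t\geq 5$ this is $\min\{n,t+2\}+3\geq 3+3=6>3=\Delta(P_t)+1$, so Lemma~\ref{lemma_Sn_LB} gives $n+3$ for $n\leq t+2$ and $t+5$ for $n>t+2$, matching the last two branches for $t>4$. When $t\in\{3,4\}$ the candidate value is $\min\{n,t+2\}+2$; this exceeds $\Delta(P_t)+1=3$ exactly when $\min\{n,t+2\}\geq 2$, i.e.\ always, so Lemma~\ref{lemma_Sn_LB} gives $\min\{n,t+2\}+2$, which is $n+2$ for $n\leq t+2$ and $t+4$ for $n>t+2$; combined with $t+2\in\{5,6\}$ this reproduces the first and third branches. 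The case $H=C_t$ is identical in spirit, using $\varphi(C_4)=2$ and $\varphi(C_t)=3$ for $t=3$ or $t>4$, which produces the split at $n=4$ (where $\min\{n,t+2\}+\varphi(C_t)$ could degenerate) versus all other $n$.

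The one subtlety I expect to be the main obstacle is the boundary behaviour at small $n$ and at $n$ near $t+2$, where the quantity $\Delta(H)+1$ is close to $\min\{n,|V(H)|+2\}+\varphi(H)$ and the strict inequality in the hypothesis of Lemma~\ref{lemma_Sn_LB} can fail or become delicate. Concretely, when the hypothesis fails, Lemma~\ref{lemma_Sn_LB} gives nothing and one must argue by hand that $\varphi(S_n\boxdot H)$ equals the stated value; this is where Lemma~\ref{lemma_Sn_LB_4} earns its keep (it guarantees $\geq 4$ unconditionally), and where the matching upper bound must come from Lemma~\ref{lemma1} applied to the explicit degree sequence in~(\ref{eq_d}). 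For $C_t$ specifically, one must also use the exception $\varphi(C_4)=2$ to see why $n=4$ is special — a $b$-chromatic coloring of $S_4\boxdot C_t$ realizing $n+3=7$ colors through vertices $v_{0,k}$ would force too many colors onto the $4$-cycle. So the skeleton of the proof is short (three one-line verifications of a numerical inequality), but the write-up should explicitly flag each regime where the clean Lemma~\ref{lemma_Sn_LB} argument is unavailable and cite Lemmas~\ref{lemma_Sn_LB_4} and~\ref{lemma1} together with Proposition~\ref{proposition_Kouider} to close those gaps.
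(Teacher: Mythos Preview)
Your strategy is exactly the paper's: apply Lemma~\ref{lemma_Sn_LB} once its hypothesis is secured, using Lemma~\ref{lemma_Sn_LB_4} and Proposition~\ref{proposition_Kouider}, after noting $\Delta(P_t)=\Delta(C_t)=2$. The paper's proof is literally those citations plus, for $K_{t'}$, the observation that $t'<\min\{n,t'+2\}+t'$ always holds.

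Two remarks. First, you overanticipate difficulty: since $\Delta(P_t)+1=\Delta(C_t)+1=3<4\leq\varphi(S_n\boxdot H)$ by Lemma~\ref{lemma_Sn_LB_4}, the hypothesis of Lemma~\ref{lemma_Sn_LB} is \emph{never} in doubt for paths or cycles. There are no boundary regimes in which you must ``fall back'' on anything or invoke Lemma~\ref{lemma1} separately; all the case splits in the displayed formulas come purely from the values of $\varphi(P_t)$ and $\varphi(C_t)$ in Proposition~\ref{proposition_Kouider} and from whether $n\leq |V(H)|+2$ or $n>|V(H)|+2$.

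Second, your handling of the cycle case is muddled. You correctly identify $\varphi(C_4)=2$ as the source of the exceptional branch, but this is an exception in $t$, not in $n$: Lemma~\ref{lemma_Sn_LB} gives $\varphi(S_n\boxdot C_t)=\min\{n,t+2\}+\varphi(C_t)$, so the anomalous value occurs when $t=4$ (yielding $\min\{n,6\}+2$), not when $n=4$. Your sentence about forcing too many colors onto ``the $4$-cycle'' in $S_4\boxdot C_t$ only makes sense when that $4$-cycle is $C_t$ itself, i.e.\ when $t=4$. As written, your reasoning produces a case split on $t=4$, which does not match the theorem's displayed split on $n=4$; you should reconcile this before writing the proof up (the displayed formula for $S_n\boxdot C_t$ appears to have $n$ and $t$ swapped in that branch).
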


\begin{proof} The respective $b$-chromatic numbers of both graphs $S_n\boxdot P_t$ and $S_n\boxdot C_t$ follow straightforwardly from Proposition \ref{proposition_Kouider} and Lemmas \ref{lemma_Sn_LB} and \ref{lemma_Sn_LB_4}  once it is observed that $\Delta(P_t)=\Delta(C_t)=2$. Furthermore, we have from Proposition \ref{proposition_Kouider} and Lemma \ref{lemma_Sn_LB} that $\varphi(S_n\boxdot K_{t'})=\min\{n,t'+2\}+t'$, whenever $t'<\min\{n,t'+2\}+t'$. That is, it always holds.
\end{proof}

\vspace{0.2cm}

Now, we determine the $b$-chromatic number of the SVN corona of two stars.

\begin{theorem}\label{theorem_SnSt} Let $n>2$ and $t>2$ be two positive integers. Then,
\[\varphi(S_n\boxdot S_t)=\begin{cases}
\begin{array}{ll}
2n+1,& \text{ if } n\leq \frac {t+1}2,\\
t+2,& \text{ if } \frac {t+1}2<n<t,\\
\min\{n,t+3\}+2, & \text{ otherwise}.
\end{array}
\end{cases}\]
\end{theorem}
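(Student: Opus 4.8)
The plan is to separate three regimes according to how $n$ compares with $\frac{t+1}{2}$ and with $t$ (the ``otherwise'' case being $n\ge t$), and to dispose of that last one first. When $n\ge t$ I would apply Lemma~\ref{lemma_Sn_LB} with $H=S_t$: since $|V(S_t)|=t+1$ and $\varphi(S_t)=2$ by Proposition~\ref{proposition_Kouider}, its conclusion reads $\varphi(S_n\boxdot S_t)=\min\{n,t+3\}+2$, which is exactly the claimed value. The only point to settle is the hypothesis $\Delta(S_t)+1<\varphi(S_n\boxdot S_t)$, that is, $t+1<\varphi(S_n\boxdot S_t)$; but the coloring constructed in the proof of Lemma~\ref{lemma_Sn_LB} already shows, unconditionally, $\varphi(S_n\boxdot S_t)\ge\min\{n,t+3\}+2$, and for $n\ge t$ this is at least $t+2>t+1$. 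So the hypothesis is met and the equality follows.

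From now on assume $n<t$. For the upper bounds I would read the degrees off~(\ref{eq_d}): the $n$ vertices $s_{0,i}$ have degree $2t+4$, the vertex $v_{0,0}$ has degree $n+t$, the $n$ vertices $v_{1,0},\dots,v_{n,0}$ have degree $t+1$, the $t$ vertices $v_{0,1},\dots,v_{0,t}$ have degree $n+1$, the vertex $u_0$ has degree $n$, and every remaining vertex has degree at most $2$. Since $n<t$, the non-increasing degree sequence starts with $2t+4$ repeated $n$ times, then $n+t$, then $t+1$ repeated $n$ times, then $n+1$ repeated $t$ times, then $n$, then values $\le 2$. Inspecting when the $k$-th term of this sequence is $\ge k-1$ yields $m(S_n\boxdot S_t)=2n+1$ when $n\le\frac{t+1}{2}$ (the critical index is $2n+1$, whose degree $t+1$ is $\ge 2n$ precisely under this condition) and $m(S_n\boxdot S_t)=t+2$ when $\frac{t+1}{2}<n<t$ (the index $t+2$ still has degree $t+1=(t+2)-1$, while the index $t+3$ has degree $t+1<t+2$). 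Lemma~\ref{lemma1} then supplies the desired upper bounds $2n+1$ and $t+2$.

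For the matching lower bounds I would build, in each sub-case, a proper coloring with $K$ colors ($K=2n+1$, respectively $K=t+2$) admitting a $b$-rainbow set of the form $\{s_{0,1},\dots,s_{0,n}\}\cup\{v_{0,0}\}\cup\{v_{1,0},\dots,v_{K-n-1,0}\}$; note $K-n-1\le n$ in both cases, so there are enough peripheral centers. The colors of these designated $b$-vertices are assigned pairwise distinct in the obvious way (e.g.\ $c(s_{0,i})=i-1$, then $c(v_{0,0})$, then the remaining colors on the chosen $v_{i,0}$'s). The freedom lies in the leaves: the $t$ leaves $v_{0,1},\dots,v_{0,t}$ of the first copy are coloured so that $v_{0,0}$ sees every color but its own among $s_{0,1},\dots,s_{0,n},v_{0,1},\dots,v_{0,t}$; and, for each $i$, the $t$ leaves $v_{i,1},\dots,v_{i,t}$ of the $(i+1)$-th copy are coloured so that $v_{i,0}$ becomes a $b$-vertex when it is a designated one and, in every case, $s_{0,i}$ sees all colors except $c(s_{0,i})$ (it already sees the colors carried by the first copy, so its own $(i+1)$-th copy only needs to supply the colors still missing). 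Since each copy is a star, its leaves are mutually non-adjacent and can be assigned any colors avoiding the two colors on $v_{i,0}$ and on $s_{0,i}$, and the counts work out because $t\ge 2n-1$ in the first sub-case and $t\ge n$ in the second.

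I expect the genuine obstacle to be this last construction, specifically the simultaneous requirement that \emph{all} $n$ vertices $s_{0,i}$ be $b$-vertices: this couples the leaf-colorings of every copy at once, and in the middle regime $\frac{t+1}{2}<n<t$ there are $2n-t-1$ peripheral copies whose centers are not designated $b$-vertices, whose leaves serve only to patch the colors still missing at the corresponding $s_{0,i}$ — one must check there is always enough palette room for this patching. By contrast, the $m$-degree computation is routine once the degree sequence above is recorded, the only care being that the boundary values $n=\frac{t+1}{2}$ (when $t$ is odd) and $n=t-1$ land in the intended piece of the formula and agree at the seams with the $n\ge t$ case.
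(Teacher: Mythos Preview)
Your proposal is correct and follows essentially the same route as the paper: dispose of $n\ge t$ via Lemma~\ref{lemma_Sn_LB} (checking, as you do, that the lower-bound construction there already forces $\varphi>t+1$), then for $n<t$ obtain the upper bound from the $m$-degree and realise it by an explicit $b$-chromatic coloring whose $b$-rainbow set consists of the subdivision vertices together with the appropriate number of star-centers $v_{i,0}$. The paper writes down closed formulas for the coloring while you describe it structurally, but the underlying scheme is the same.

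One remark: the ``genuine obstacle'' you flag is not actually an obstacle. Once you colour the $(i+1)$-th copy so that its $t+1$ vertices $v_{i,0},v_{i,1},\dots,v_{i,t}$ exhaust every colour except $c(s_{0,i})$ (which is always possible since the leaves need only avoid $c(v_{i,0})$ and $c(s_{0,i})$, leaving exactly $K-2\le t$ colours to distribute among $t$ leaves), the vertex $s_{0,i}$ is \emph{automatically} a $b$-vertex from that copy alone---no coupling through the first copy is needed. So the non-designated peripheral copies in the middle regime are handled exactly like the designated ones, and the simultaneous requirement on all $s_{0,i}$ decouples completely.
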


\begin{proof} Since $\Delta(S_t)=t$, we have from Proposition \ref{proposition_Kouider} and Lemma \ref{lemma_Sn_LB} that $\varphi(S_n\boxdot S_t)=\min\{n,t+3\}+2$, whenever $t+1<\min\{n,t+3\}+2$. That is, whenever $n\geq t$. So, we assume from now on that $n<t$. From Lemma \ref{lemma1} and (\ref{eq_d}), all the described values are upper bounds of the $b$-chromatic number under consideration. In order to see that they are reached, we define an appropriate $b$-chromatic coloring $c$ of the graph $S_n\boxdot S_t$. Here, we assume that $V(S_t)=\{v_0,\ldots,v_t\}$, where $v_0$ is the center of the star. For each pair of positive integers $i<n$ and $j<t$, the following two cases arise.

If $n\leq \frac{t+1}2$, then we define $c(u_i)=c(u_0)=c(v_{0,0})=2n$, $c(s_{0,i})=2(i-1)$,  $c(v_{0,j})=1+((j-1)\,\mathrm{mod}\,n)$, $c(v_{i,0})=2i-1$ and $c(v_{i,j})=2i+((j-1)\,\mathrm{mod}\,(2n-1))$. (Figure \ref{Fig_S3S4} (left) illustrates the graph $S_3\boxdot S_6$.)
    
Further, if $\frac {t+1}2<n<t$, then we define $c(u_i)=c(u_0)=c(v_{0,0})=1$, $c(s_{0,i})=2(i-1)\,\mathrm{mod}\,(t+2)$,  $c(v_{0,j})=3$, $c(v_{i,0})=(2i-1)\,\mathrm{mod}\,(t+2)$ and $c(v_{i,j})=2i+((j-1)\,\mathrm{mod}\,(t+2))$.  (Figure \ref{Fig_S3S4} (right) illustrates the graph $S_3\boxdot S_4$.)
\end{proof}

\begin{figure}[ht]
\begin{center}
\includegraphics[scale=0.08]{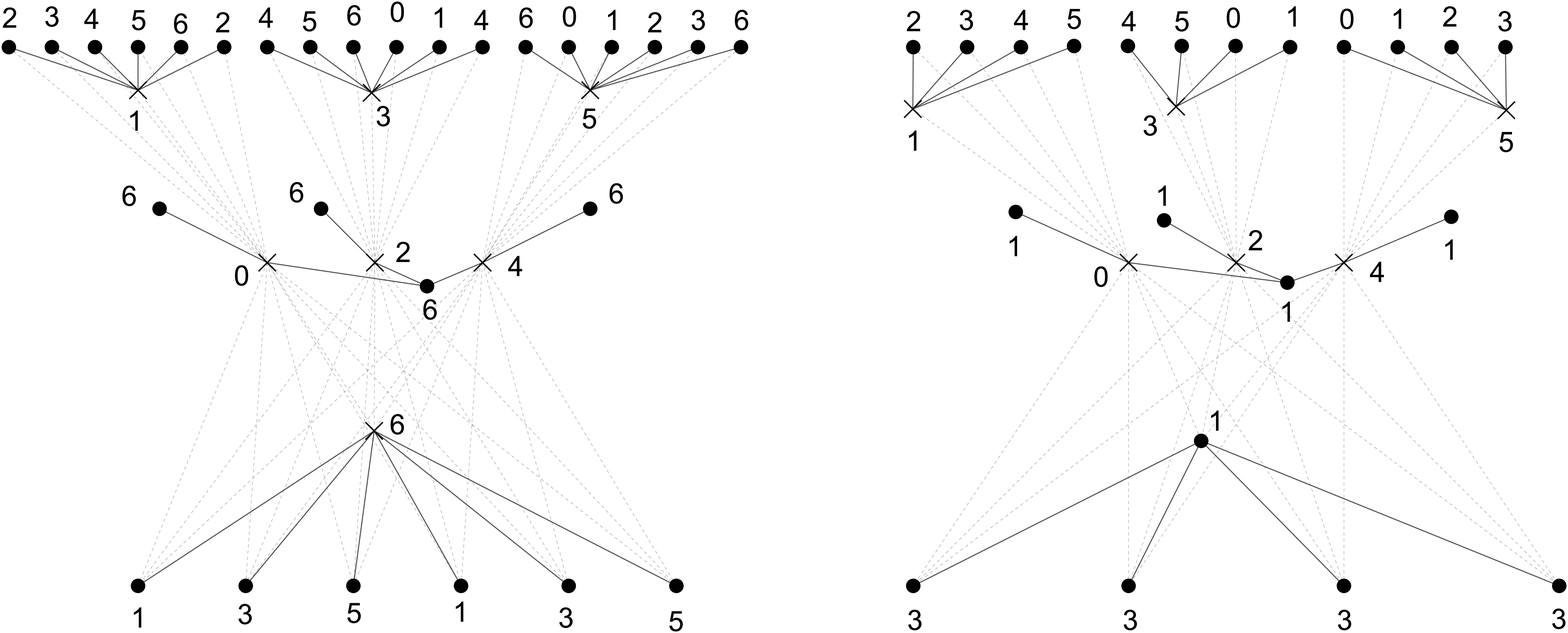}
\caption{Optimal $b$-chromatic colorings of $S_3\boxdot S_6$ and $S_3\boxdot S_4$.}
\label{Fig_S3S4}
\end{center}
\end{figure}

\section{SVN corona of complete graphs}\label{sec:complete}

In this section, we study the $b$-chromatic number of the SVN corona $K_n\boxdot G$ of a complete graph $K_n$ of set of vertices $V(K_n)=\{u_0,\ldots,u_{n-1}\}$ and a graph $G\in\mathcal{G}$, whenever $m(K_n\boxdot G)\leq n+2$. Here, we assume that $n>1$. Otherwise, $K_1\boxdot G=K_1$. The following result establishes a lower bound for a general graph $K_2\boxdot H$.

\begin{lemma}\label{lemma_Kn_n2} Let $H$ be a non-empty graph. Then, $\varphi(K_2\boxdot H)\geq \varphi(H)+1$.
\end{lemma}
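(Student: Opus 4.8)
The plan is to exhibit an explicit $b$-chromatic coloring of $K_2 \boxdot H$ using $\varphi(H)+1$ colors, building directly on an optimal $b$-chromatic coloring of $H$ itself. Recall that $K_2 \boxdot H$ is obtained from the subdivision graph $S(K_2)$—which is just the path $u_0\,s_{0,1}\,u_1$—by attaching two disjoint copies of $H$: the first copy (call its vertices $v_{0,k}$) is joined to every vertex of $N_{S(K_2)}(u_0) = \{s_{0,1}\}$, and the second copy (vertices $v_{1,k}$) is joined to every vertex of $N_{S(K_2)}(u_1) = \{s_{0,1}\}$. So in fact $s_{0,1}$ is adjacent to all of $u_0$, $u_1$, and every $v_{0,k}$ and every $v_{1,k}$, while $u_0$ is adjacent only to $s_{0,1}$ and $u_1$ only to $s_{0,1}$.

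First I would fix an optimal $b$-chromatic coloring $c'\colon V(H) \to \{0,\ldots,\varphi(H)-1\}$ of $H$. I then define a coloring $c$ of $K_2\boxdot H$ with color set $\{0,\ldots,\varphi(H)\}$ as follows: put $c(v_{0,k}) = c'(v_k)$ for all $k$ (so the first copy of $H$ is colored exactly like $H$), put $c(s_{0,1}) = \varphi(H)$ (the new color), and put $c(u_0) = c(u_1) = 0$. For the second copy of $H$, I need a proper coloring of $H$ avoiding the color $\varphi(H)$ on every vertex (since each $v_{1,k}$ is adjacent to $s_{0,1}$); since $\varphi(H)$ was not among the original colors, the coloring $c(v_{1,k}) = c'(v_k)$ works and is automatically proper on that copy. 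One must also check that $c$ is proper overall: within $S(K_2)$ it is proper since $\{u_0,u_1\}$ get $0$ and $s_{0,1}$ gets $\varphi(H)\neq 0$ (using $H$ non-empty so $\varphi(H)\geq 1$); edges from $s_{0,1}$ to the $H$-copies are fine since no $v_{i,k}$ has color $\varphi(H)$; and edges inside each copy of $H$ are proper because $c'$ is proper.

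Next I would identify a $b$-rainbow set of size $\varphi(H)+1$. The vertex $s_{0,1}$ is adjacent to every $v_{0,k}$, and since $c'$ is a $b$-chromatic coloring of $H$ it uses all colors $0,\ldots,\varphi(H)-1$; hence $s_{0,1}$ sees all colors $0,\ldots,\varphi(H)-1$ in its neighborhood and carries the color $\varphi(H)$, so it is a $b$-vertex for color $\varphi(H)$. For each color $i \in \{0,\ldots,\varphi(H)-1\}$, let $w_i$ be a $b$-vertex of color $i$ for the optimal coloring $c'$ of $H$; then $v_{0,w_i}$ has color $i$, sees every other color $j \in \{0,\ldots,\varphi(H)-1\}$ among its neighbors $v_{0,*}$ (inherited from $c'$), and sees color $\varphi(H)$ via its neighbor $s_{0,1}$. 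So $v_{0,w_i}$ is a $b$-vertex for color $i$, and $\{s_{0,1}\} \cup \{v_{0,w_i} : 0 \le i < \varphi(H)\}$ is the desired $b$-rainbow set, proving $\varphi(K_2\boxdot H) \ge \varphi(H)+1$.

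I do not anticipate a serious obstacle here: the argument is essentially a clean lifting of an optimal coloring of $H$ into the first copy, with the subdivision vertex $s_{0,1}$ absorbing one fresh color and acting as a universal neighbor for that copy. The only points needing a little care are (i) confirming $\varphi(H)\geq 1$ so that the new color is genuinely distinct and the coloring of $S(K_2)$ stays proper—this is where the non-emptiness hypothesis on $H$ is used—and (ii) checking that each $b$-vertex $w_i$ of $H$, once copied to $v_{0,w_i}$, still sees all of $H$'s colors among its neighbors in $K_2\boxdot H$, which holds because the first copy of $H$ is an induced copy and its internal edges are untouched by the corona construction. Everything else is a routine verification of properness.
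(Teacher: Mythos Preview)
Your proof is correct and follows essentially the same approach as the paper: both lift an optimal $b$-chromatic coloring of $H$ onto each copy, color $u_0=u_1=0$ and $s_{0,1}=\varphi(H)$, and use the first copy's $b$-vertices together with $s_{0,1}$ as the $b$-rainbow set. Your write-up is in fact more detailed than the paper's, which omits the explicit verification of properness and of the $b$-rainbow set.
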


\begin{proof} Let $c:V(H)\rightarrow \{0,\ldots,\varphi(H)-1\}$ be a $b$-chromatic coloring of the graph $H$. If $V(H)=\{v_0,\ldots, v_{t-1}\}$, then we define the $b$-chromatic coloring $c'$ of the graph $G\boxdot H$ such that $c'(u_0)=c'(u_1)=0$,  $c'(s_{0,1})=\varphi(H)$ and 
$c'(v_{i,j})=c(v_j)$, for all $i\in\{0,1\}$ and $j<t$. Hence, $\varphi(K_2\boxdot H)\geq \varphi(H)+1$.
\end{proof}

\vspace{0.2cm}

Figure \ref{Fig_K2P4}, together with Proposition \ref{proposition_Kouider}, shows that the lower bound described in the previous lemma is tight, but the equality does not hold in general. It is so that $\varphi(K_2\boxdot P_3)=2=\varphi(P_3)+1$, but $\varphi(K_2\boxdot P_4)=4>3=\varphi(P_4)+1$.

\begin{figure}[ht]
\begin{center}
\includegraphics[scale=0.095]{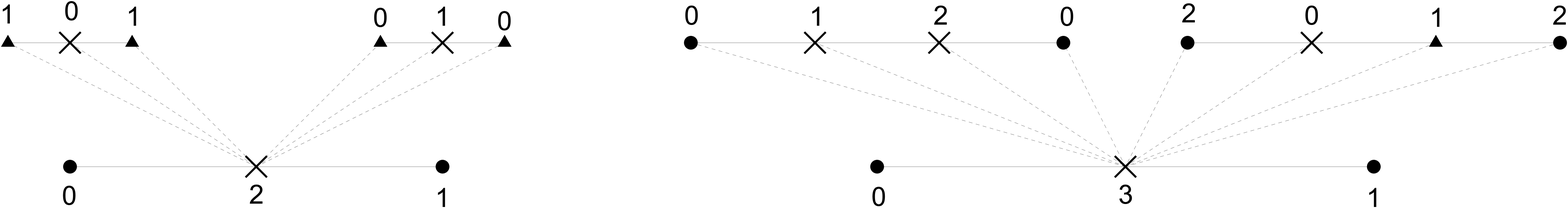}
\caption{Optimal $b$-chromatic colorings of $K_2\boxdot P_3$ and $K_2\boxdot P_4$.}
\label{Fig_K2P4}
\end{center}
\end{figure}

We study separately each one of the mentioned graphs $K_n\boxdot G$, with $G\in\mathcal{G}$. Firstly, we determine the $b$-chromatic number of the SVN corona $K_n\boxdot P_t$ in case of being $m(K_n\boxdot P_t)\leq n+2$.  From Lemma \ref{lemma1} and (\ref{eq_d}), it is equivalent to say that
either $n\in\{2,3,4\}$ or $n\geq 2t+1$.

\begin{theorem}\label{theorem_KnPt} Let $n>1$ and $t>2$ be two positive integers. Then,
\[\varphi(K_n\boxdot P_t)=\begin{cases}
\begin{array}{lll}
n+1, & \text{ if } \begin{cases}
n=2 \text{ and } t=3,\\
 n\geq 7 \text{ and } t=3,
\end{cases}\\
n+2, & \text{ if } \begin{cases}
n=2 \text{ and } t>3,\\
n\in\{3,4\},\\
n\geq 2t+1>7.
\end{cases}
\end{array}
\end{cases}\]
\end{theorem}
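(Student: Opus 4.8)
The plan is to split into the two regimes according to the value of $m(K_n\boxdot P_t)$ and to use the established upper bounds together with explicit $b$-chromatic colorings. First I would handle the upper bounds: since we are assuming $m(K_n\boxdot P_t)\le n+2$, Lemma \ref{lemma1} combined with the degree formula (\ref{eq_d}) immediately gives $\varphi(K_n\boxdot P_t)\le m(K_n\boxdot P_t)$, and a short degree count (the vertices of $I(K_n)$ have degree $2t+2$, the center-type copies $v_{i,0}$ or $v_{i,t-1}$ have degree $n$, etc.) shows $m(K_n\boxdot P_t)=n+1$ exactly when $t=3$ and $n\in\{2\}\cup\{n\ge 7\}$, and $m(K_n\boxdot P_t)=n+2$ in the remaining listed cases. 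This pins down the claimed numbers as upper bounds; the rest of the proof is the construction of optimal colorings attaining them.

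Next I would produce the constructive colorings. For the small cases $n\in\{2,3,4\}$ I would simply exhibit an explicit palette on the vertices $u_i$, $s_{i,j}$, and $v_{i,j}$, checking properness via (\ref{eq_d}) and pointing to a $b$-rainbow set — for $n=2$ this is essentially Lemma \ref{lemma_Kn_n2} applied with $H=P_t$, refined by the observation (cf. Figure \ref{Fig_K2P4}) that $P_3$ is special (its $b$-chromatic number $2$ forces $\varphi(K_2\boxdot P_3)=2=n+1$) while $P_t$ for $t>3$ gains the extra colour from the subdivision vertex, giving $n+2$. For the large regime $n\ge 2t+1$, I would color the $u_i$'s with a cyclic pattern over $\{0,\dots,n-1\}$ restricted to the first $n+1$ or $n+2$ colours, color the subdivision vertices $s_{i,j}$ so that each $s_{i,j}$ is adjacent (in $S(K_n)$, hence in the corona) to $u_i$, $u_j$ and to a whole copy of $P_t$, and then color each copy of $P_t$ using a proper $2$-coloring shifted so that, for the designated $b$-vertices (which will be certain $s_{i,j}$), every other colour appears in the neighbourhood. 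The $b$-rainbow set will consist of suitably chosen subdivision vertices, possibly together with one vertex $u_i$ or one $v_{i,0}$ to supply the $(n+1)$-st or $(n+2)$-nd colour, exactly as in the figures.

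The main obstacle I expect is the construction in the large-$n$ regime: there each subdivision vertex sees only two of the $u$-vertices plus a single $P_t$-copy, so realizing a $b$-vertex of some colour $c$ requires that the $n$ (or $n+1$) other colours all appear among $\{u_i,u_j\}\cup\{v_{i,0},\dots,v_{i,t-1}\}$, which has only $t+2\le \tfrac{n+3}{2}+1$ vertices — far fewer than $n+1$. Hence the $b$-vertices cannot be subdivision vertices when $n$ is large; instead they must be the $u_i$'s themselves, each of which is adjacent to all other $u_j$ and to $t-1$ subdivision vertices and to nothing else relevant, so one must arrange the colours on the $s_{i,j}$ incident to a fixed $u_i$ to cover precisely the missing colours. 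Balancing this over all $i$ simultaneously — a kind of edge-coloring/Latin-square condition on $K_n$ — is the delicate combinatorial core; I would resolve it by the cyclic assignment $c(s_{i,j})\equiv (i+j)$ or a similar modular rule and verify the covering property case-by-case on the residue classes, with the parity of $n$ and the borderline value $n=2t+1$ treated by small ad hoc adjustments (as the statement's case split and the accompanying figures already suggest). The distinction $t=3$ versus $t>3$ (respectively $n+1$ and $n+2$) will fall out of whether the single $P_t$-copy attached to a subdivision vertex has enough vertices to host an extra colour.
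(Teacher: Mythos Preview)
Your proposal has two genuine gaps.

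\textbf{The upper bound at $(n,t)=(7,3)$.} You assert that $m(K_n\boxdot P_t)=n+1$ whenever $t=3$ and $n\ge 7$, but this fails at $n=7$: besides the $\binom{7}{2}=21$ subdivision vertices of degree $2t+2=8$, the seven middle path vertices $v_{i,1}$ also have degree $(n-1)+2=8$, and one computes $m(K_7\boxdot P_3)=9=n+2$. Thus Lemma~\ref{lemma1} alone only yields $\varphi(K_7\boxdot P_3)\le 9$, and a separate argument is needed to push this down to $8$. The paper supplies one: any $b$-rainbow set of a hypothetical $9$-coloring could contain only pairwise non-adjacent vertices of the form $s_{i,j}$ or $v_{k,1}$, and a case analysis rules this out. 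Your proposal does not address this exceptional case.

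\textbf{The $b$-vertices in the large-$n$ construction.} You correctly note that the subdivision vertices $s_{i,j}$ cannot serve as $b$-vertices once $n$ is large (their degree is $2t+2$; incidentally they are adjacent to \emph{two} copies of $P_t$, not one), but you then propose the $u_i$ as $b$-vertices, claiming they are ``adjacent to all other $u_j$ and to $t-1$ subdivision vertices.'' Both assertions are wrong: in $S(K_n)$ the original edges have been subdivided, so $u_i$ is adjacent only to the $n-1$ subdivision vertices $s_{i,j}$ ($j\ne i$) and to nothing else; by (\ref{eq_d}) we have $d_{K_n\boxdot P_t}(u_i)=n-1$, which is too small for a $b$-vertex in any $(n{+}1)$- or $(n{+}2)$-coloring. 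The paper's construction instead takes the $b$-vertices from the copies of $P_t$: the middle vertices $v_{i,1}$ have degree $n+1$ and the endpoints $v_{i,0},v_{i,2}$ have degree $n$, and the $b$-rainbow set is assembled from these. The combinatorial task is then to colour the $s_{i,j}$ so that, for each designated $v_{i,k}$, the $n-1$ incident subdivision vertices (together with one or two path neighbours) display all remaining colours; the paper does this via $c(s_{i,j})=(i+j+1)\bmod(n+1)$ when $n$ is odd and a more intricate parity-dependent scheme when $n$ is even. Your Latin-square intuition is on the right track, but it is attached to the wrong family of vertices, so the construction as you describe it cannot succeed.
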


\begin{proof} The case $n=3$ follows from Theorem \ref{theorem_CtPn}. So, we assume that $n\neq 3$. Except for the case $(n,t)=(7,3)$, all the described values coincide with $m(K_n\boxdot P_t)$ and hence, from Lemma \ref{lemma1}, they are upper bounds of the $b$-chromatic number under consideration. Proposition \ref{proposition_Kouider} and Lemma \ref{lemma_Kn_n2} imply that this upper bound is reached in case of being $n=2$ and $t\neq 4$. In addition, Figure \ref{Fig_K2P4} (right) illustrates the case $t=4$. 

Further, even if $m(K_7\boxdot P_3)=9$, this lower bound is not reached, because every $b$-rainbow set of a $b$-chromatic coloring of $K_7\boxdot P_3$ with nine colors would only contain non-adjacent vertices of the form $s_{i,j}$ or $v_{k,1}$. A simple study of cases enables us to ensure that this condition is not feasible and hence, $\varphi(K_7\boxdot P_3)\leq 8$. This new bound is indeed reached, as we prove later for the case $n\geq 7$ and $t=3$.

Now, in order to prove that the remaining values are reached, we define an appropriate $b$-chromatic coloring $c$ of the graph $K_n\boxdot P_t$ such that $c(u_i)=i\,\mathrm{mod}\,m(K_n\boxdot G)$, for every non-negative integer $i<n$. In addition, for every non-negative integers $i,j<n$, with $i<j$, and $k<t$, the following cases arise. Here, we assume that $P_t=\langle\,v_0,\ldots,v_{t-1}\,\rangle$. 

If $n=4$, then
\[c(s_{i,j})=\begin{cases}
(i-1)\,\mathrm{mod}\,4,& \text{ if }j=i+1,\\
4 + i,&\text{ otherwise}.
\end{cases}\]
In addition,
\[c(v_{i,j})=\begin{cases}
\begin{array}{ll}
4, &\text{ if } (i,j)\in\{(1,1),(3,1)\},\\
5, &\text{ if } (i,j)\in\{(0,1),(2,1)\},\\
i, &\text{ if } j=2,\\
(i+1)\,\mathrm{mod}\,4, &\text{ if } j=0,\\
c(v_{i,j-2}),& \text{ otherwise}.
\end{array}
\end{cases}\]
(Figure \ref{Fig_K4P3} illustrates the graph $K_4\boxdot P_3$.)

\begin{figure}[ht]
\begin{center}
\includegraphics[scale=0.125]{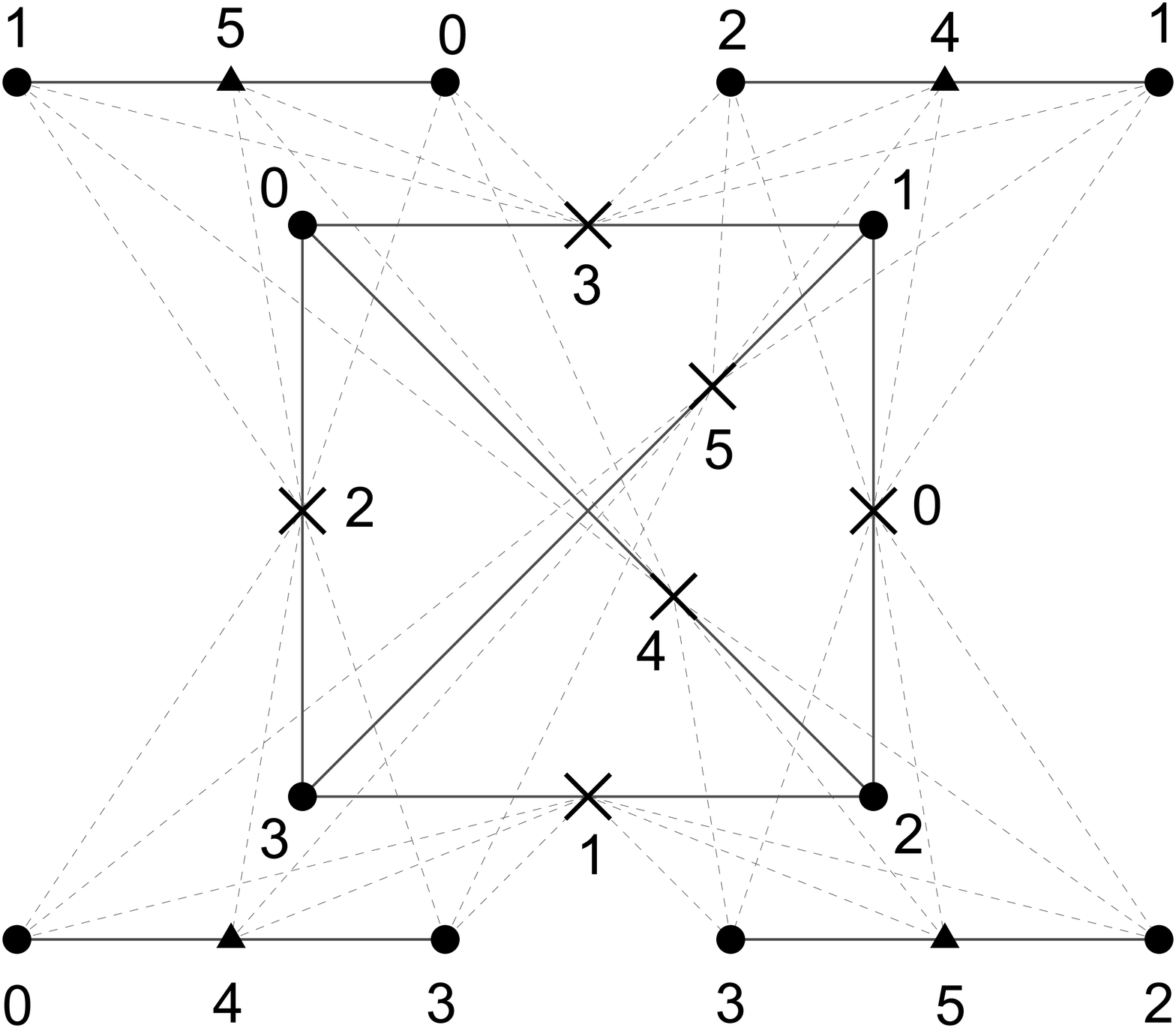}
\caption{Optimal $b$-chromatic coloring of $K_4\boxdot P_3$.}
\label{Fig_K4P3}
\end{center}
\end{figure}

Further, if $n\geq 7$ and $t=3$, then $c(s_{i,j})=(i+j+1)\,\mathrm{mod}\,(n+1)$, whenever $n$ is odd. Otherwise, if $n$ is even, then, for each positive integer $h\leq \frac n2$, we define
\[c(s_{i,i+h})=\begin{cases}
(i-2)\,\mathrm{mod}\,(n+1), & \text{ if } h=1,\\
\left(i+\frac {n-h}2\right)\,\mathrm{mod}\,(n+1), & \text{ if } 1<h<\frac n2 \text{ and } h \text{ is even },\\
\left(i-\frac{h-1}2\right)\,\mathrm{mod}\,(n+1), & \text{ if } 1<h<\frac n2 \text{ and } h \text { is odd },\\
i+1, & \text{ if } i<h = \frac n2.
\end{cases}\]
In addition, we define
\[c(v_{i,k})=\begin{cases}
\begin{array}{ll}
c(u_i),& \text{ if } k=1,\\
(2i+1)\,\mathrm{mod}\,(n+1),& \text{ if } n \text{ is odd and } k\in\{0,2\},\\
c(s_{i-1,i+1}),& \text{ otherwise}.
\end{array}
\end{cases}\]
According to this definition of the map $c$, we have that, if $n$ is odd, then a $b$-rainbow set is formed by the vertices $v_{0,1},\ldots,v_{n,1},\,v_{\frac{n-1}2,2}$. Otherwise, if $n$ is even, then a $b$-rainbow set is formed by the vertices $v_{0,1},v_{0,2},\ldots,v_{\frac n2-1,1},$ $v_{\frac n2-1,2},\,v_{\frac n2,1}$. (Figure \ref{Fig_K9P3} illustrates the graphs $K_8\boxdot P_3$ and $K_9\boxdot P_3$.)

\begin{figure}[ht]
\begin{center}
\includegraphics[scale=0.09]{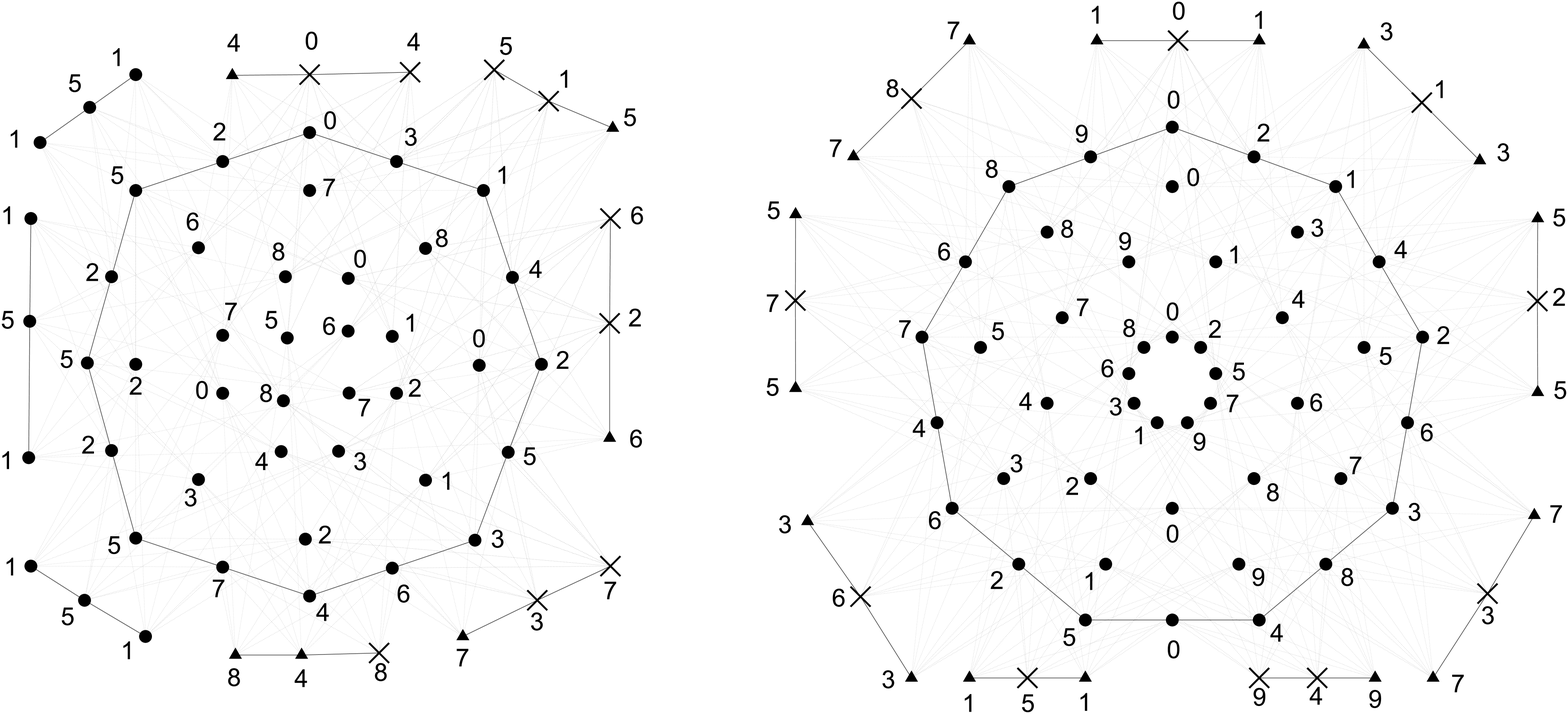}
\caption{Optimal $b$-chromatic colorings of $K_8\boxdot P_3$ and $K_9\boxdot P_3$.}
\label{Fig_K9P3}
\end{center}
\end{figure}

Finally, if $n\geq 2t+1>7$, then we define the map $c$ as in the previous case, except for
\[c(v_{i,k})=\begin{cases}
\begin{array}{ll}
n+1, & \text{ if } \begin{cases}
n \text{ is even and } \begin{cases}
i\neq \frac n2 \text{ and } k\in\{0,3\},\\
(i,k)=\left(\frac n2,1\right), 
\end{cases}\\
n \text{ is odd and }  \begin{cases}
i\neq \frac {n-1}2 \text{ and } k=2,\\
i=\frac {n-1}2 \text{ and } k\in\{0,3\},
\end{cases}
\end{cases}\\
c(v_{i,j-3}), & \text{ if } j\in\{4,5\},\\
c(v_{i,k-2}),& \text{ otherwise}.
\end{array}
\end{cases}\]
A $b$-rainbow set is formed by the same $b$-vertices of the previous case, together with $v_{\frac n2,1}$, if $n$ is even, and $v_{0,2}$, if $n$ is odd. (Figure \ref{Fig_K9P4} illustrates the graphs $K_8\boxdot P_4$ and $K_9\boxdot P_4$.)
\end{proof}

\begin{figure}[ht]
\begin{center}
\includegraphics[scale=0.08]{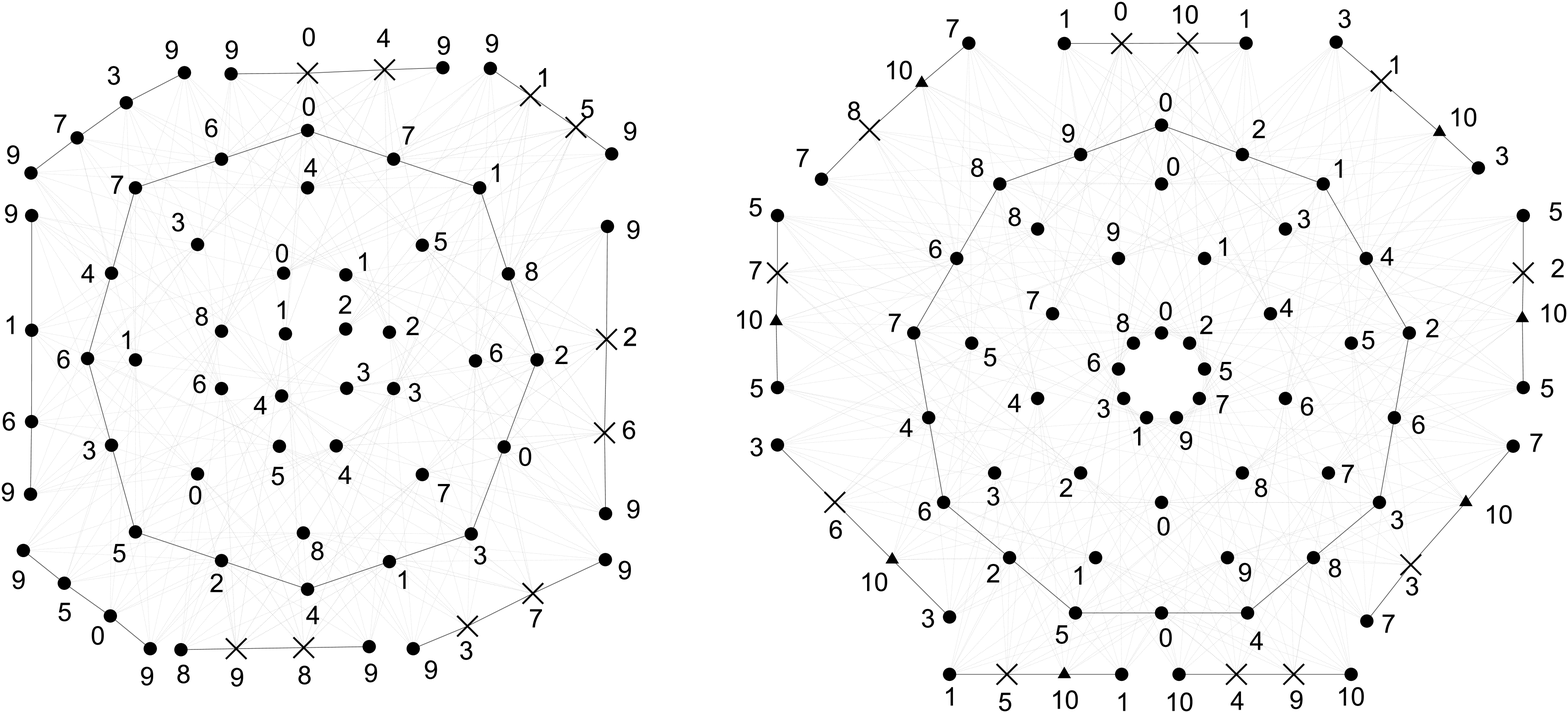}
\caption{Optimal $b$-chromatic colorings of $K_8\boxdot P_4$ and $K_9\boxdot P_4$.}
\label{Fig_K9P4}
\end{center}
\end{figure}

\vspace{0.2cm}

The next graph to study is the SVN corona $K_n\boxdot C_t$ such that $m(K_n\boxdot C_t)\leq n+2$. From Lemma \ref{lemma1} and (\ref{eq_d}), it is equivalent to say that either $n\in\{2,3,4\}$ or $n\geq 2t+1$.

\begin{theorem}\label{theorem_KnCt} Let $n>1$ and $t>2$ be two positive integers. Then,
\[\varphi(K_n \boxdot C_t)=
\begin{cases}
\begin{array}{ll}
n+1,& \text{ if }\begin{cases}
(n,t)=(2,4),\\
n\geq 9 \text{ and } t=4,
\end{cases}\\
n+2, & \text{ if } \begin{cases}
n=2 \text{ and } t\neq 4,\\
n\in\{3,4\},\\
n\geq 2t+1 \text{ and } t\neq 4.
\end{cases}
\end{array}
\end{cases}\]
\end{theorem}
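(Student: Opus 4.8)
The plan is to follow the template of the earlier proofs in this section: settle the small cases with results already proved, read off the stated numbers as upper bounds, and then exhibit an optimal $b$-chromatic colouring together with a $b$-rainbow set in each remaining case.

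First I would note that $K_3=C_3$, so the case $n=3$ is exactly Theorem~\ref{theorem_CnCt}, which gives $\varphi(K_3\boxdot C_t)=5=n+2$. From (\ref{eq_d}) one has $d_{K_n\boxdot C_t}(u_i)=n-1$, $d_{K_n\boxdot C_t}(s_{i,j})=2t+2$ and $d_{K_n\boxdot C_t}(v_{i,j})=n+1$, and a direct computation shows $m(K_n\boxdot C_t)=n+2$ whenever $n\in\{2,3,4\}$ or $n\geq 2t+1$, so Lemma~\ref{lemma1} gives $\varphi(K_n\boxdot C_t)\leq n+2$. This already matches the asserted value for $n=4$, for $n=2$ with $t\neq 4$, and for $n\geq 2t+1$ with $t\neq 4$. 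Moreover, for $n=2$ the bound $\varphi(K_2\boxdot C_t)\geq\varphi(C_t)+1$ follows from Lemma~\ref{lemma_Kn_n2} and Proposition~\ref{proposition_Kouider}, yielding the matching lower bound $4$ when $t\neq 4$ and the lower bound $3$ when $t=4$. So it remains to prove the improved upper bound $\varphi(K_n\boxdot C_4)\leq n+1$ in the two exceptional cases, and to produce explicit optimal colourings for $n=4$, for $n\geq 2t+1$ with $t\neq 4$, and for $n\geq 9$ with $t=4$.

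The heart of the proof is the bound $\varphi(K_n\boxdot C_4)\leq n+1$ for $(n,t)=(2,4)$ and for $n\geq 9$, $t=4$. Suppose a $b$-chromatic colouring with $n+2$ colours existed. By (\ref{eq_d}) a $b$-vertex of such a colouring must have degree at least $n+1$, so it is a vertex $v_{i,j}$ or, only when $2t+2=10\geq n+1$ (that is, $n\leq 9$), possibly a vertex $s_{i,j}$. The crucial local fact is that \emph{within a single copy of $C_4$ at most one colour is supported by a $b$-vertex lying in that copy}: if $v_{i,j}$ is a $b$-vertex then its $n+1$ neighbours $\{s_{i,k}:k\neq i\}\cup\{v_{i,j-1},v_{i,j+1}\}$ carry $n+1$ distinct colours, which together with $c(v_{i,j})$ exhaust all $n+2$ colours, whence the fourth vertex $v_{i,j+2}$ of the copy (adjacent to $v_{i,j-1}$, $v_{i,j+1}$ and to every $s_{i,k}$) is forced to repeat $c(v_{i,j})$, so neither $v_{i,j-1}$ nor $v_{i,j+1}$ has two distinctly coloured neighbours in the copy and neither can be a $b$-vertex. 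Hence the $n$ copies support at most $n<n+2$ colours, which settles $n\geq 10$. For $n=9$ one additionally rules out $b$-vertices $s_{i,j}$: such a vertex has degree exactly $n+1$, so its ten neighbours $\{u_i,u_j\}\cup(\text{copy }i)\cup(\text{copy }j)$ carry ten distinct colours, forcing copies $i$ and $j$ to use four distinct colours each (hence to contain no $v$-$b$-vertex) with disjoint palettes; a short count bounding the number of supported colours by $(9-r)+\binom{r}{2}$, where $r$ is the number of copies blocked in this way, together with the observation that reaching eleven colours forces some copy to be incident with three such $s$-$b$-vertices, gives the contradiction. For $(n,t)=(2,4)$ the same bookkeeping reduces, exactly as for $P_3\boxdot C_4$ in the proof of Theorem~\ref{theorem_PnCt}, to the impossibility of a $b$-colouring of $C_4$ with three colours, so $\varphi(K_2\boxdot C_4)\leq 3$.

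It remains to build the colourings attaining the stated values. For $n=4$ and for $n\geq 2t+1$ with $t\neq 4$ (target $n+2$), I would give an explicit construction in the spirit of the proof of Theorem~\ref{theorem_KnPt}: when $t\geq 5$, start from the colouring of $K_n\boxdot P_t$ there, keep its values on the $u_i$, $s_{i,j}$ and $v_{i,j}$, re-colour only the two vertices $v_{i,0}$ and $v_{i,t-1}$ made adjacent by the extra edge of $C_t$, and split into the parities of $n$; the case $t=3$ is handled via $K_3=C_3$, and the $b$-rainbow set is adapted accordingly (for $n=4$ one may simply take the six vertices $s_{i,j}$, each of degree $2t+2\geq n+1$). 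For $n\geq 9$ with $t=4$ (target $n+1$), I would give, by parity of $n$, a colouring with $n+1$ colours in which the $s_{i,j}$ receive $n-1$ pairwise distinct colours avoiding the two colours placed on each incident copy of $C_4$; then a vertex $v_{i,0}$ (together with the equally coloured $v_{i,2}$) and a vertex $v_{i,1}$ are $b$-vertices inside every copy, and distributing the colour pairs over the $n$ copies so that all $n+1$ colours occur furnishes the $b$-rainbow set. The main obstacle is the $n=9$ case of the upper bound — pinning down exactly how a $b$-vertex $s_{i,j}$ constrains the colourings of two whole copies of $C_4$ — while the explicit colourings, though lengthy because of the parity sub-cases, amount to a routine check of properness and of the $b$-vertex condition along the listed $b$-rainbow sets.
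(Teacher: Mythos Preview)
Your plan is essentially the paper's: the same reductions for $n\in\{2,3\}$ via Proposition~\ref{proposition_Kouider}, Lemma~\ref{lemma_Kn_n2} and Theorem~\ref{theorem_CnCt}; the same local observation that a $b$-vertex $v_{i,j}$ in an $(n+2)$-colouring of $K_n\boxdot C_4$ forces $c(v_{i,j+2})=c(v_{i,j})$ and hence at most one $b$-colour per copy of $C_4$; and the same recycling of the colourings from Theorem~\ref{theorem_KnPt} for the constructions (the paper likewise writes down the explicit modification of the $v_{i,j}$ for the $t=4$ lower bound).

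Two small divergences are worth flagging. Your aside that ``the case $t=3$ is handled via $K_3=C_3$'' is a slip --- the identity $C_3=K_3$ concerns the second factor and does not reduce $K_n\boxdot C_3$ to anything already proved; the paper instead takes the $K_n\boxdot P_3$ colouring of Theorem~\ref{theorem_KnPt} verbatim and simply sets $c(v_{i,2})=n+1$ to accommodate the extra edge. And for $n=4$ as well as $n\geq 2t+1\geq 11$ the paper uses the $K_n\boxdot P_t$ colouring \emph{unchanged}, so your planned re-colouring of $v_{i,0}$ and $v_{i,t-1}$ is unnecessary. On the $n=9$ obstruction your counting sketch is actually more informative than the paper's bare ``a simple study of cases'', but be aware that $(9-r)+\binom{r}{2}\geq 11$ does not itself fail for $r\geq 4$, so the residual case analysis you allude to (ruling out configurations where several $s_{i,j}$ share an index) still has to be carried out explicitly.
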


\begin{proof} The case $n=3$ follows from Theorem \ref{theorem_CnCt}. In addition, since $m(K_2\boxdot C_t)=4$, the case $n=2$ and $t\neq 4$ follows from Lemmas \ref{lemma1} and \ref{lemma_Kn_n2}, together with Proposition \ref{proposition_Kouider}. Moreover, it is readily verified the non-existence of a $b$-rainbow set of $K_2\boxdot C_4$ formed by four distinct $b$-vertices. Thus, the same mentioned results imply that $\varphi(K_2\boxdot C_4)=3$. 

Let us focus now on the case $n\geq 2t+1$, for which  Lemma \ref{lemma1} implies that $\varphi(K_n\boxdot C_4)\leq m(K_n\boxdot C_4)=n+2$. In order to prove that this upper bound is not reached, let us suppose the existence of an $(n+2)$-coloring of $K_n\boxdot C_4$. If $v_{i_0,j_0}$ were a vertex of a $b$-rainbow set of $K_n\boxdot C_4$, for some $i_0<n$ and $j_0<4$, 
then the four vertices $v_{i_0,0},v_{i_0,1},v_{i_0,2}$ and $v_{i_0,3}$ would be colored by at most three colors. One of them would be the color $c(u_{i_0})$, which makes that no vertex of the form $s_{i_0,k}$ may be part of the $b$-rainbow set under consideration. Then, since $c$ is a proper coloring, it would be $c(v_{i_0,k})=c(v_{i_0,(k+2)\,\mathrm{mod}\,4})$, for some $k<4$, and hence, from the mentioned four vertices, only the vertex $v_{i_0,j}$ would be part of the $b$-rainbow set. It contradicts the case $n>9$, for which only vertices of the form $v_{i,j}$ may be part of the $b$-rainbow set and hence, the latter only could be formed by at most $n$ vertices. Based on the previous remarks, a simple study of cases enables us to ensure that this condition is also no feasible in case of being $n=9$. Hence, $\varphi(K_n\boxdot C_4)\leq n+1$, for all $n\geq 9$. In order to prove that this upper bound is reached, it is enough to consider the $b$-chromatic coloring $c$ of $K_n\boxdot C_4$ described in the proof of Theorem \ref{theorem_KnPt}, except for

\[c(v_{i,j})=\begin{cases}
\begin{array}{ll}
c(u_{\frac n2}),& \text{ if } n \text{ is even}, i=\frac n2 \text{ and } j\in\{1,3\},\\
c(v_{i,1}), & \text{ if } n\text{ is even}, i\neq \frac n2 \text{ and } j=3,\\
c(v_{i,2}), & \text{ if } n\text{ is even and } j=0,\\
n+1, & \text{ if } n \text{ is odd},  i=\frac {n-1}2 \text{ and } j=0,\\
c(v_{i,0}), & \text{ if } n\text{ is odd}, i\neq \frac {n-1}2 \text{ and } j=2,\\
c(v_{i,1}), & \text{ if } n\text{ is odd and } j=3.
\end{array}
\end{cases}\]

Finally, Lemma \ref{lemma1} also implies that the remaining values described in the statement of this theorem are upper bounds of $\varphi(K_n\boxdot C_t)$. The same $b$-chromatic coloring described for both $n=4$ and $n\geq 2t+1\geq 11$ in the proof of Theorem \ref{theorem_KnPt} enables us to ensure that these upper bounds are reached in such cases. Here, we assume that $C_t=\langle\,v_0,\ldots,v_{t-1},v_0\,\rangle$. For $n\geq 2t+1=7$, it is also enough to consider the same $b$-chromatic coloring $c$ described in the proof of Theorem \ref{theorem_KnPt}, together with $c(v_{i,2})=n+1$, for every positive integer $i<n$.
\end{proof}

\vspace{0.2cm}

Now, in order to study the SVN corona of a complete graph and either a star or a complete graph, the following technical result is useful.

\begin{lemma}\label{lemma_technical} Let $n>1$ be a positive integer and let $H$ be a graph of order $t\geq 1$ such that $\varphi(K_n\boxdot H)\geq 2t+2$. If $\mathcal{R}$ is a $b$-rainbow set of $K_n\boxdot H$ and $\langle\,u_{i_0},u_{i_1},\ldots,u_{i_{\ell}},u_{i_0}\,\rangle$ is a cycle in $K_n$, with $\ell<n$, such that $\{s_{i_0,i_1},s_{i_1,i_2},\ldots,s_{i_{\ell-1},i_{\ell}},s_{i_{\ell},i_0}\}\subseteq\mathcal{R}$, then $\ell$ must be even. Moreover, if $n\leq 2t+1$, then $\ell=2t=n-1$.
\end{lemma}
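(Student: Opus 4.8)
The plan is to analyze the colors appearing on the cycle $\langle\, u_{i_0},\ldots,u_{i_\ell},u_{i_0}\,\rangle$ in $K_n$ and on the subdivision vertices sitting on its edges, using the hypothesis $\varphi(K_n\boxdot H)\geq 2t+2$ to control which vertices can serve as $b$-vertices. First I would record two structural facts from (\ref{eq_d}): each vertex $s_{i,i'}$ has degree $2t+2$, while every vertex $v_{i,j}$ has degree $d_{K_n}(u_i)+d_H(v_j)\leq (n-1)+(t-1)$, and every $u_i$ has degree $n-1$. Since a $b$-vertex of a coloring with $k=\varphi(K_n\boxdot H)$ colors must have degree at least $k-1\geq 2t+1$, the relevant constraint is that $b$-vertices of the form $v_{i,j}$ or $u_i$ require $n-1$ to be large; in particular, for the subdivision vertices in $\mathcal{R}$ this is automatic, but the argument will turn on the neighborhoods $N_{S(K_n)}(u_i)=\{s_{i,i'}\colon i'\neq i\}$ and the fact that $s_{i,i'}$ is adjacent to $u_i$, $u_{i'}$, and to all $2t$ vertices in the $(i+1)^{\mathrm{th}}$ and $(i'+1)^{\mathrm{th}}$ copies of $H$.

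Next I would set up the parity argument. Consider the closed walk on the cycle and track $c(u_{i_0}),c(u_{i_1}),\ldots$ together with the colors $c(s_{i_0,i_1}),\ldots,c(s_{i_\ell,i_0})$. Because each $s_{i_k,i_{k+1}}$ is adjacent to both $u_{i_k}$ and $u_{i_{k+1}}$, its color differs from both; I would argue that in fact the coloring forces an \emph{alternation} of colors along the $u$-vertices and $s$-vertices of the cycle. The key point: since each $s_{i_k,i_{k+1}}\in\mathcal{R}$ is a $b$-vertex, all $k-1=\varphi-1\geq 2t+1$ other colors must appear in $N(s_{i_k,i_{k+1}})$, and this neighborhood consists of just $u_{i_k},u_{i_{k+1}}$ plus $2t$ vertices of two $H$-copies — exactly $2t+2$ vertices, which must therefore receive $2t+1$ \emph{distinct} colors (all colors except $c(s_{i_k,i_{k+1}})$). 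This rigidity means $c(u_{i_k})\neq c(u_{i_{k+1}})$ (already known) but, more importantly, propagating around the cycle and using that consecutive $s$-vertices share a $u$-vertex, one gets that $c(u_{i_0})=c(u_{i_2})=\cdots$ and $c(u_{i_1})=c(u_{i_3})=\cdots$; for this bipartition of the cyclic sequence to be consistent, $\ell$ must be even, i.e.\ the cycle has even length. I expect the cleanest way to see the alternation is: the $2t$ $H$-vertices in $N(s_{i_k,i_{k+1}})$ are the $(i_k+1)^{\mathrm{th}}$ and $(i_{k+1}+1)^{\mathrm{th}}$ copies, so the colors ``freed up'' by $s_{i_k,i_{k+1}}$ are essentially fixed, and comparing the constraint coming from $s_{i_{k-1},i_k}$ and $s_{i_k,i_{k+1}}$ (which share $u_{i_k}$ and share the $(i_k+1)^{\mathrm{th}}$ copy of $H$) forces $c(u_{i_{k-1}})=c(u_{i_{k+1}})$.

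For the ``moreover'' clause, assume $n\leq 2t+1$. Then $\varphi(K_n\boxdot H)\geq 2t+2>n$, so no vertex $u_i$ (degree $n-1<2t+1$) and no vertex $v_{i,j}$ (degree $\leq n-1+t-1 = n+t-2$; and since $n\leq 2t+1$ this is $\leq 3t-1$, which can still be $\geq 2t+1$ only if $t\geq 2$ and $n$ is near $2t+1$ — here I would need to check that even then such a vertex cannot lie in a $b$-rainbow set of size $2t+2$, presumably because achieving $2t+1$ colors in a neighborhood of size $\leq n+t-2\leq 3t-1$ is too tight, or by a separate counting argument) can be a $b$-vertex; so $\mathcal{R}$ consists entirely of vertices $s_{i,i'}$. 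There are only $\binom{n}{2}$ such vertices, but more sharply: I would argue that if $s_{i,i'}\in\mathcal{R}$ then the $2t+2$ vertices in $N(s_{i,i'})$ use up all $2t+1$ remaining colors, and two such $s$-vertices in $\mathcal{R}$ that are non-adjacent but ``close'' force strong overlap constraints; a counting of how many copies of $H$ are available versus how many distinct colors are needed pins down $\ell$. Concretely, the $\ell+1$ subdivision vertices on the cycle together see the $H$-copies indexed by $i_0+1,\ldots,i_\ell+1$, which are $\ell+1$ distinct copies (at most $n$ of them), contributing at most $(\ell+1)t$ vertex-slots, but they also need to collectively realize a rainbow pattern that, combined with $\ell+1\leq n\leq 2t+1$, forces $\ell+1=n$ and hence $\ell=n-1=2t$. \textbf{The main obstacle} I anticipate is making the alternation/propagation step fully rigorous — precisely pinning down why the neighborhood rigidity of the $s$-vertices forces $c(u_{i_{k-1}})=c(u_{i_{k+1}})$ rather than merely $c(u_{i_{k-1}})\neq c(u_{i_k})\neq c(u_{i_{k+1}})$ — and separately, in the ``moreover'' part, ruling out $v_{i,j}$ as a $b$-vertex uniformly when $n$ is close to $2t+1$; both require carefully exploiting that a $b$-vertex's neighborhood of size exactly $2t+2$ must contain $2t+1$ distinct colors, leaving essentially no slack.
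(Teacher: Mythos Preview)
The gap is precisely where you flag it: the claim that comparing the $b$-vertex constraints at $s_{i_{k-1},i_k}$ and $s_{i_k,i_{k+1}}$ forces $c(u_{i_{k-1}})=c(u_{i_{k+1}})$ does not follow, and in general it is false. Write $A_m=\{c(u_{i_m})\}\cup\{c(v_{i_m,j}):j<t\}$ for the colour set on the block indexed by $i_m$. The rigidity you correctly identify (each $s$-neighbourhood of size $2t+2$ must carry at least $2t+1$ distinct colours) pins down $A_{k-1}$ and $A_{k+1}$ only as \emph{sets}: one checks that $A_{k-1}\setminus\{c(s_{i_k,i_{k+1}})\}=A_{k+1}\setminus\{c(s_{i_{k-1},i_k})\}$. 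But $u_{i_m}$ plays no distinguished role inside its block --- it is adjacent to exactly the same subdivision vertices $s_{i_m,*}$ as every $v_{i_m,j}$ is --- so nothing prevents $c(u_{i_{k-1}})$ from reappearing on some $v_{i_{k+1},j}$ rather than on $u_{i_{k+1}}$. Your alternation of $u$-colours therefore cannot be established, and the parity argument collapses.

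The paper's fix is to track the colour of a \emph{subdivision} vertex instead. Since $s_{i_1,i_2}\in\mathcal{R}$ is a $b$-vertex and $c(s_{i_0,i_1})\neq c(s_{i_1,i_2})$, the colour $c(s_{i_0,i_1})$ must appear in $N(s_{i_1,i_2})$; by properness it cannot appear on $N(s_{i_0,i_1})\cap N(s_{i_1,i_2})=\{u_{i_1}\}\cup\{v_{i_1,*}\}$, hence $c(s_{i_0,i_1})\in A_2$. Iterating along the cycle pushes $c(s_{i_0,i_1})$ into $A_4,A_6,\ldots$, and if $\ell$ were odd the chain would close up inside $c(N(s_{i_0,i_1}))$, contradicting properness. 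This argument lives entirely at the level of the sets $A_m$ and needs no statement about $c(u_{i_m})$ individually.

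For the ``moreover'' clause, your detour into ruling out $v_{i,j}$ as $b$-vertices is unnecessary: the lemma only concerns a cycle of subdivision vertices already assumed to lie in $\mathcal{R}$, and says nothing about the rest of $\mathcal{R}$. The paper's argument is a direct count using the same block relations. Chaining $A_{k-1}\setminus\{c(s_{i_k,i_{k+1}})\}=A_{k+1}\setminus\{c(s_{i_{k-1},i_k})\}$ over $\ell/2$ steps shows that $A_0\setminus\{c(s_{i_{2\kappa+1},i_{2\kappa+2}}):0\le\kappa<\ell/2\}$ and $A_\ell\setminus\{c(s_{i_{2\kappa},i_{2\kappa+1}}):0\le\kappa<\ell/2\}$ coincide; call this common set $X$. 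Since $s_{i_\ell,i_0}\in\mathcal{R}$ is a $b$-vertex, its $2t+2$ neighbours carry at least $2t+1$ distinct colours, so $|A_0\cap A_\ell|\le 1$; but $X\subseteq A_0\cap A_\ell$, whence $t+1-\ell/2\le 1$. Combined with $\ell<n\le 2t+1$ this forces $\ell=2t=n-1$.
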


\begin{proof} Without loss of generality, let us suppose the existence of an optimal $b$-chromatic coloring $c$ of the graph $K_n\boxdot H$, for which there are a $b$-rainbow set $\mathcal{R}$ and a cycle $\langle\,u_0,u_1,\ldots,u_\ell,u_0\,\rangle$ in $K_n$, with $\ell<n$, such that $\{s_{0,1},s_{1,2},\ldots,s_{\ell-1,\ell},s_{0,\ell}\}\subseteq\mathcal{R}$. 

Since $s_{0,1}$ and $s_{1,2}$ are two distinct b-vertices of the b-rainbow set $\mathcal{R}$, it must be $c\left(s_{0,1})\in c(N(s_{1,2})\setminus N(s_{0,1})\right)$. That is, $c(s_{0,1})\in\{c(u_2),\,c(v_{2,0}),\ldots,c(v_{2,t-1})\}$. Moreover, we have in a similar and recursive way that $c(s_{0,1})\in c(N(s_{2\kappa-1,2\kappa})\setminus$ $N(s_{2\kappa-2,2\kappa-1}))$, and hence,  $c(s_{0,1})\in\{c(u_{2\kappa}),\,c(v_{2\kappa,0}),\ldots,c(v_{2\kappa,t-1})\}$, for every positive integer $\kappa\leq \frac \ell 2$. If $\ell$ is odd, then $c(s_{0,1})\in c(N(s_{0,1}))$. It contradicts the fact that $c$ is a proper coloring. So, $\ell$ must be even.

Furthermore, for each non-negative integer $i<n$, $\left\{\left\{c(u_i),\,c(v_{i,0}),\ldots,c(v_{i,t})\right\}\right\}\setminus\left\{c(s_{i+1,i+2})\right\} = \left\{\left\{c(u_{i+2}),\,c(v_{i+2,0}),\ldots,c(v_{i+2,t})\right\}\right\}\setminus\left\{c(s_{i,i+1})\right\}$, where all the indices are taken modulo $n$. Since $\ell$ is even, then the sets
\[\left\{\left\{c(u_0),\,c(v_{0,0}),\ldots,c(v_{0,t-1})\right\}\right\}\setminus\left\{c(s_{2\kappa+1,2\kappa+2})\colon\, 0\leq \kappa < \frac \ell 2\right\}\]
and
\[\left\{\left\{c(u_\ell ),\,c(v_{\ell,0}),\ldots,c(v_{\ell,t-1})\right\}\right\}\setminus\left\{c(s_{2\kappa,2\kappa+1})\colon\, 0\leq \kappa< \frac \ell 2\right\}\]
coincide. These two sets are indeed formed by at most one common color, because, since $s_{0,\ell}$ is a $b$-vertex, it must be $2t+1\leq |c(N(s_{0,\ell}))|\leq d(s_{0,\ell})=2t+2$. That is, $t+1-\frac \ell 2\leq 1$. Therefore, if $n\leq 2t+1$, then we have that $\ell<n\leq 2t+1$, and thus, $t+1-\frac \ell 2 >\frac 12$. Hence, $t+1-\frac \ell 2=1$. That is, $\ell=2t=n-1$.
\end{proof}

\vspace{0.2cm}

Let us study the SVN corona $K_n\boxdot S_t$ such that $m(K_n\boxdot S_t)\leq n+2$. From Lemma \ref{lemma1} and (\ref{eq_d}), it is equivalent to say that $n\geq 2t+3$.

\begin{theorem}\label{theorem_KnSt} Let $n>1$ and $t>2$ be two positive integers. Then,
\[\varphi(K_n \boxdot S_t)=
\begin{cases}
\begin{array}{ll}
n,& \text{ if } n=2t+3,\\
n+1, & \text{ if } n\geq 2t+4.
\end{array}
\end{cases}\]
\end{theorem}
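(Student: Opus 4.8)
The plan is to follow the same two-step template used throughout the paper: first pin down the exact value of $m(K_n\boxdot S_t)$ in the regime $n\geq 2t+3$, which by Lemma~\ref{lemma1} gives an upper bound; then split into the subcase $n=2t+3$, where a more delicate infeasibility argument is needed to drop the bound from $n+2$ to $n$, and the subcase $n\geq 2t+4$, where the value $n+1$ matches $m$ and only a construction is required. Throughout I would write $V(S_t)=\{v_0,\ldots,v_t\}$ with $v_0$ the center, so that $\Delta(S_t)=t$ and $|V(S_t)|=t+1$, and recall from (\ref{eq_d}) that $d_{K_n\boxdot S_t}(s_{i,j})=2t+4$, $d_{K_n\boxdot S_t}(v_{i,0})=(n-1)+t$, $d_{K_n\boxdot S_t}(v_{i,k})=(n-1)+1$ for $k\neq 0$, and $d_{K_n\boxdot S_t}(u_i)=n-1$. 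A short $m$-degree count with $n\geq 2t+3$ shows $\Delta(K_n\boxdot S_t)=\max\{2t+4,\,n+t-1\}$ and, sorting degrees, $m(K_n\boxdot S_t)=n+2$ when $n\geq 2t+4$ and $m(K_n\boxdot S_t)=n$ (or possibly $n+1$, to be checked carefully) when $n=2t+3$; in either case Lemma~\ref{lemma1} furnishes the stated upper bound after ruling out the extra unit.

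For the case $n\geq 2t+4$ I would reuse the construction from the proof of Theorem~\ref{theorem_KnPt} (the $n\geq 2t+1>7$ branch there produces $n+1$ colors with a $b$-rainbow set among the $v_{i,k}$ and $s_{i,j}$ vertices), adapting the star structure: set $c(u_i)=i\bmod(n+1)$, color the $s_{i,j}$ vertices with the same cyclic scheme distinguishing $n$ odd and $n$ even that appears in Theorem~\ref{theorem_KnPt}, color each copy of $S_t$ so that its center $v_{i,0}$ reuses an appropriate "slack" color (there are at least $n+1-(2t+3)=n-2t-2\geq 2$ colors free at each copy, plus the color $0$ available for leaves) and its leaves are colored by the remaining palette, and then exhibit a $b$-rainbow set of size $n+1$ formed by the same type of vertices as in Theorem~\ref{theorem_KnPt} together with one extra $b$-vertex supplied by a copy of $S_t$. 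An illustrative figure analogous to Figure~\ref{Fig_K9P3} would accompany this. The verification that every color has a $b$-vertex is routine but must be done for the parity cases separately.

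For the boundary case $n=2t+3$ the real work is the upper bound $\varphi(K_n\boxdot S_t)\leq n$, i.e.\ showing no $b$-chromatic coloring with $n+1$ colors exists. Here I would invoke Lemma~\ref{lemma_technical}: with $n=2t+3$ we have $\varphi\geq 2t+2$ forced, $n>2t+1$, so the lemma says any even cycle of $s$-vertices in a $b$-rainbow set has length $\ell$ with $t+1-\ell/2\leq 1$, i.e.\ $\ell\geq 2t$; but $\ell<n=2t+3$ and $\ell$ even forces $\ell\in\{2t, 2t+2\}$ — actually $\ell=2t+2$ is impossible since $2t+2$ and $2t+3=n$ would need $\ell<n$ so $\ell=2t+2<2t+3$ is allowed, hence I must argue more carefully which of $\ell=2t$ or $\ell=2t+2$ can occur and count how many $s$-vertices and how many center/leaf vertices $v_{i,0}$, $u_0$ a size-$(n+1)$ $b$-rainbow set could contain. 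Since the leaves $v_{i,k}$ with $k\neq 0$ have degree only $n+1<n+2$... wait, $d(v_{i,k})=n$ for $k\neq 0$ when $n=2t+3$ — recheck: $d(v_{i,k})=(n-1)+1=n<n+1$, so leaves cannot be $b$-vertices of an $(n+1)$-coloring, and the $u_i$ have degree $n-1<n+1$ so they cannot either; thus every element of an $(n+1)$-size $b$-rainbow set is an $s_{i,j}$ or a center $v_{i,0}$ (whose degree is $n+t-1\geq n+1$). A counting/adjacency argument — each center $v_{i,0}$ sees only the $2t+2$ subdivision vertices incident to $u_i$ plus the $t$ leaves of its own copy, so at most $2t+2$ distinct colors among $s$-vertices, while $n+1=2t+4$ colors are needed — should yield the contradiction, and then Lemma~\ref{lemma1} with $m(K_n\boxdot S_t)=n$ (to be confirmed in step one) closes it; the matching construction with $n$ colors, mirroring Theorem~\ref{theorem_KnPt}, completes the case. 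The main obstacle is precisely this feasibility analysis at $n=2t+3$: correctly enumerating which vertex types can be $b$-vertices and pushing the Lemma~\ref{lemma_technical} cycle-length constraint through to a clean contradiction.
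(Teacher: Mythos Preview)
Your overall plan is the paper's plan: reuse the $K_n\boxdot P_3$ construction of Theorem~\ref{theorem_KnPt} for the lower bounds, and invoke Lemma~\ref{lemma_technical} for the infeasibility at $n=2t+3$. But the degree arithmetic is off in ways that derail the argument. Since $|V(S_t)|=t+1$, the subdivision vertices have $d(s_{i,j})=2(t+1)+2=2t+4$, and sorting degrees gives $m(K_n\boxdot S_t)=n+1$ when $n\geq 2t+4$ (not $n+2$) and $m(K_{2t+3}\boxdot S_t)=n+2$ (not $n$ or $n+1$). The first correction is harmless---Lemma~\ref{lemma1} then yields the desired upper bound directly, and only the construction is needed. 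The second is not: at $n=2t+3$ your plan to ``close with Lemma~\ref{lemma1} once $m=n$'' collapses, and you must separately exclude both $k=n+1$ and $k=n+2$ colorings, whereas you only set out to exclude $k=n+1$.

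Two further points on that boundary case. When applying Lemma~\ref{lemma_technical} you must substitute the order $t+1$ of $S_t$ for the parameter called $t$ there: the hypothesis becomes $k\geq 2(t+1)+2=2t+4$ and, since $n=2t+3\leq 2(t+1)+1$, the conclusion is $\ell=2(t+1)=2t+2$ exactly, so any such cycle of subdivision $b$-vertices is Hamiltonian---this sharpens your ``$\ell\in\{2t,2t+2\}$'' to a single value and is what the paper uses. Also, your degree argument excluding leaves $v_{i,k}$ ($k>0$) from a $b$-rainbow set at $k=n+1$ is wrong: they have degree exactly $n=(n+1)-1$ and can in principle be $b$-vertices; you need a structural argument (or handle $k=n+2$ first, where the degree bound does work, and then reduce). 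Finally, for the lower bound at $n=2t+3$ you need an explicit $n$-coloring; ``mirroring Theorem~\ref{theorem_KnPt}'' does not suffice, since those constructions produce $n+1$ or $n+2$ colors. The paper supplies a new one with a Hamiltonian $s$-vertex $b$-rainbow set.
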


\begin{proof} From Lemma \ref{lemma1} and (\ref{eq_d}), we have that $\varphi(K_n\boxdot S_t)\leq m(K_n\boxdot S_t)=n+1$, whenever $n\geq 2t+4$, and $\varphi(K_{2t+3}\boxdot S_t)\leq m(K_{2t+3}\boxdot S_t)=n+2$. In order to prove that this upper bound is reached whenever $n\geq 2t+4$, it is enough to define the same $b$-chromatic coloring $c$ described in the proof of Theorem \ref{theorem_KnPt} for the graph $K_n\boxdot P_3$, for $n\geq 7$, together with $c(v_{i,j})=c(v_{i,2})$, for every pair of non-negative integers $i<n$ and $j\in\{3,\ldots,t\}$. Here, we assume that $V(S_t)=\{v_0,\ldots,v_t\}$, where $v_0$ is the center of the star.

Now, let us suppose the existence of an optimal $b$-chromatic coloring $c$ of $K_{2t+3}\boxdot S_t$, with $k\in\{2t+4,\,2t+5\}$ colors, and let $\mathcal{R}$ be a $b$-rainbow set arising from this coloring. From Lemma \ref{lemma_technical}, any subset $\{s_{i_0,i_1},\,s_{i_1,i_2},\ldots,s_{i_{\ell-1},i_\ell},\,s_{i_\ell,i_0}\}\subseteq \mathcal{R}$ arising from a cycle within $K_n$ would be such that $\ell=2t+3$. However, it is readily verified that $\mathcal{R}$ can only contain non-adjacent vertices of the form $s_{i,j}$ or $v_{i,0}$, with $0\leq i,j<2t+3$. So, it would be exactly formed by the $2t+3$ vertices of the complete graph, which contradicts that $k\in\{2t+4,2t+5\}$. Hence, no such cycle can exist. But then, the mentioned non-adjacency of $b$-vertices makes $\mathcal{R}$ to be formed by, at most, $n$ distinct $b$-vertices. It contradicts again that $k\in\{2t+4,2t+5\}$. Therefore, $\varphi(K_{2t+3}\boxdot S_t)\leq 2t+3$. In order to prove that this upper bound is reached, it is enough to consider the $b$-chromatic coloring $c$ of the graph $K_{2t+3}\boxdot S_t$ such that, for each non-negative integer $i<2t+3$ and each pair of positive integers $h\leq t$ and $k\leq t$, we have that $c(u_i)=c(v_{i,0})=i\,\mathrm{mod}\,(2t+3)$, $c(s_{i,i+h})=\left(i+\left(2\cdot\left\lfloor\frac h2\right\rfloor-1\right)\right)\,\mathrm{mod}\,(2t+3)$ and $c(v_{i,k})=(i+2k)\,\mathrm{mod}\,(2t+3)$. A $b$-rainbow set is formed by the vertices $s_{0,1},\,s_{1,2},\,\ldots,s_{n-2,n-1},\,s_{0,n-1}$. (Figure \ref{Fig_K9S3} illustrates the graph $K_9\boxdot S_3$.)
\end{proof}

\begin{figure}[ht]
\begin{center}
\includegraphics[scale=0.095]{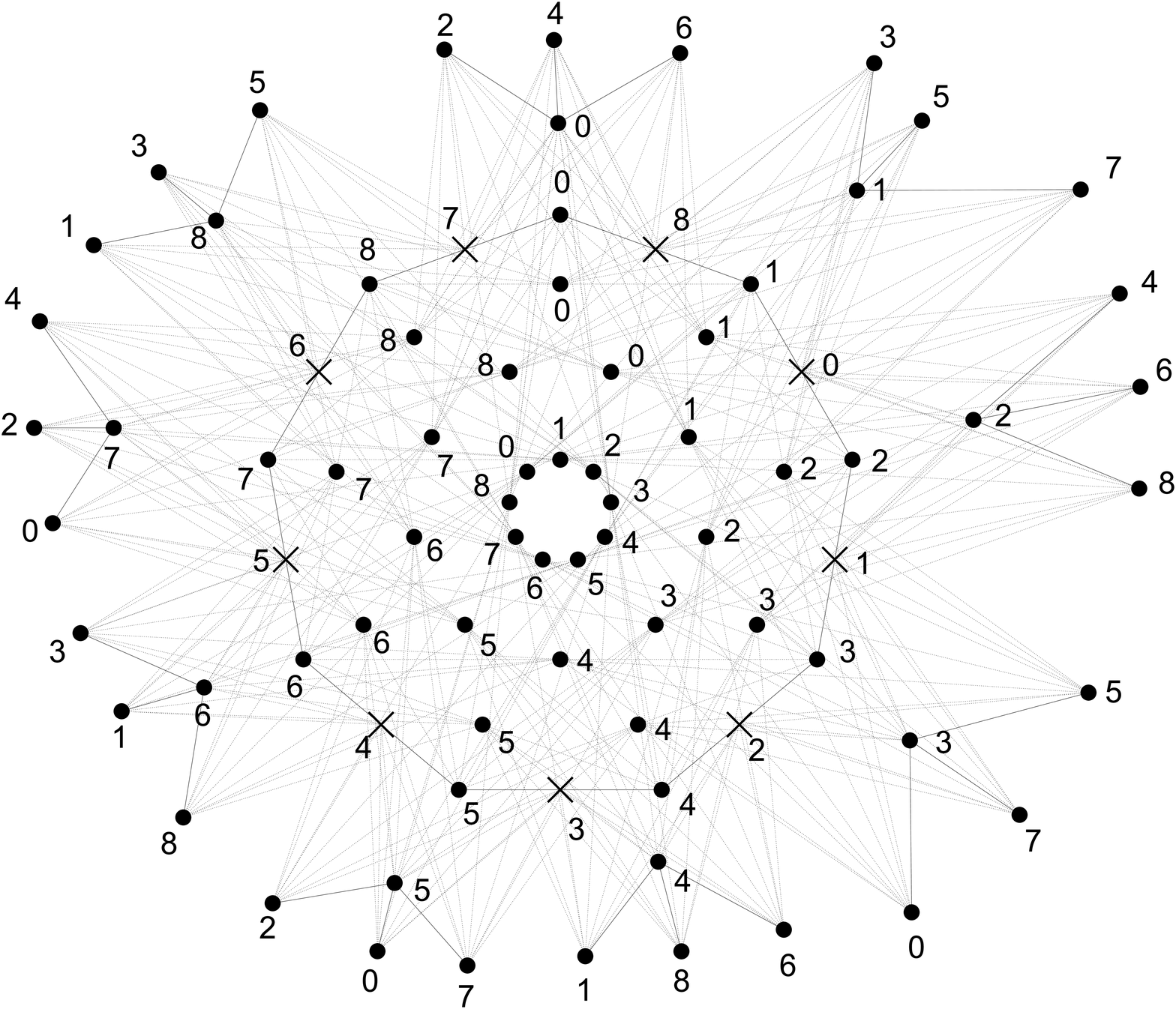}
\caption{Optimal $b$-chromatic coloring of $K_9\boxdot S_3$.}
\label{Fig_K9S3}
\end{center}
\end{figure}

Let us finish our study by dealing with the SVN corona $K_n\boxdot K_t$ in case of being $m(K_n\boxdot K_t)\leq n+2$. From Lemma \ref{lemma1} and (\ref{eq_d}), it is equivalent to say that $t\in\{1,2,3\}$, except for $(n,t)\in\{(5,3),\,(6,3)\}$.

\begin{theorem}\label{theorem_KnKt} Let $n>1$ and $t$ be two positive integers. Then,
\[\varphi(K_n \boxdot K_t)=
\begin{cases}
\begin{array}{ll}
n-1, & \text{ if } t=1 \text{ and } n>4 \text{ is even},\\
n, & \text{ if } \begin{cases}
t=1 \text{ and } n \text{ is odd},\\
(n,t)\in\{(2,1),\,(4,1),\,(5,2)\},
\end{cases}\\
n+1, & \text{ if }\begin{cases}
(n,t)\in\{(2,2),\,(3,2)\},\\
n\geq 6 \text{ and } t=2,
\end{cases}\\
n+2, & \text{ if } \begin{cases}
(n,t)\in\{(2,3),\,(3,3),\,(4,2),\,(4,3)\},\\
n\geq 7 \text{ and } t=3.
\end{cases}
\end{array}
\end{cases}\]
\end{theorem}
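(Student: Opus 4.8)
\textbf{Proof proposal for Theorem \ref{theorem_KnKt}.}

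The plan is to treat the four value-regimes separately, in each case establishing the stated number as both an upper and a lower bound. For the upper bounds I would invoke Lemma \ref{lemma1} together with the degree formula (\ref{eq_d}): a direct computation of $m(K_n\boxdot K_t)$ gives $n+2$ whenever $t\in\{1,2,3\}$ (except for the two sporadic pairs $(5,3),(6,3)$ excluded from this section), so the values $n$, $n+1$ and $n+2$ appearing in the statement are immediately dominated by $m(K_n\boxdot K_t)$ in most cases. The delicate upper bounds are the ones strictly below $m(K_n\boxdot K_t)$, namely $\varphi(K_n\boxdot K_1)\le n-1$ for even $n>4$, $\varphi(K_n\boxdot K_1)\le n$ for $n$ odd, and $\varphi(K_n\boxdot K_2)\le n+1$ for $n\ge 6$. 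For these I would argue as in the proofs of Theorems \ref{theorem_KnPt} and \ref{theorem_KnCt}: any $b$-rainbow set $\mathcal R$ must consist of pairwise non-adjacent vertices, and since $u_0,\ldots,u_{n-1}$ induce a clique, $\mathcal R$ can contain at most one of them. The bulk of $\mathcal R$ therefore lies among the subdivision vertices $s_{i,j}$ and the copy-vertices $v_{i,k}$; in $K_n\boxdot K_1$ each copy has a single vertex $v_{i,0}$ adjacent to $u_i$, and one shows that a $b$-rainbow set of size $m(K_n\boxdot K_1)$ would force, via Lemma \ref{lemma_technical}, a cycle of even length $\ell=2t=n-1$ among the $s_{i,j}$'s inside $K_n$, which is impossible when $n$ is even; a parallel parity/counting argument handles $K_n\boxdot K_2$.

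For the lower bounds, which form the constructive core, I would exhibit explicit $b$-chromatic colorings. The case $t=2$ with $n\ge 6$ and the case $t=3$ with $n\ge 7$ should follow by recycling the colorings already built for $K_n\boxdot P_3$ in the proof of Theorem \ref{theorem_KnPt}, since $K_2=P_2$ and $K_3=C_3$ have small order; concretely, for $K_n\boxdot K_3$ I would take the $K_n\boxdot P_3$ coloring and merely recolor the added edge $v_{i,0}v_{i,2}$ inside each triangle, checking that the same $b$-rainbow set of $s_{i,j}$'s (or $v_{i,k}$'s, as in that proof) survives. The small sporadic pairs $(n,t)\in\{(2,2),(3,2),(2,3),(3,3),(4,2),(4,3),(2,1),(4,1)\}$ would be dispatched either by reference to earlier theorems ($n=2$ via Lemma \ref{lemma_Kn_n2} and Proposition \ref{proposition_Kouider}, $n=3$ via Theorem \ref{theorem_CnCt}) or by the explicit $K_4\boxdot P_3$-type coloring of Theorem \ref{theorem_KnPt}, with accompanying figures. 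For $K_n\boxdot K_1$ (which is just $S(K_n)$ with a pendant vertex on each original vertex, i.e.\ the graph $K_n$ with each edge subdivided and each original vertex given one leaf), I would give a coloring depending on the parity of $n$: when $n$ is odd, color $u_i$ and its pendant $v_{i,0}$ so that the $n$ colors on the clique each appear, and color the $\binom n2$ subdivision vertices using a rotational scheme $c(s_{i,i+h})$ analogous to the one in Theorem \ref{theorem_KnSt}, making the clique vertices themselves a $b$-rainbow set of size $n$; when $n$ is even, the parity obstruction costs one color and I would build a $b$-rainbow set of size $n-1$ instead.

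The main obstacle I anticipate is the even-$n$ case of $K_n\boxdot K_1$: both directions are subtle there. On the upper-bound side one must rule out a $b$-rainbow set of size $n$ (equivalently of size $m(K_n\boxdot K_1)$ when that equals $n$), and the cleanest route is Lemma \ref{lemma_technical}, whose hypothesis $\varphi(K_n\boxdot K_t)\ge 2t+2$ with $t=1$ reads $\varphi\ge 4$ — so I would first dispatch $n\in\{3,4\}$ by hand and then, for $n\ge 5$, use the lemma to conclude that a full-size $b$-rainbow set would need an even cycle $\langle u_{i_0},\ldots,u_{i_\ell},u_{i_0}\rangle$ with $\ell=2t=1$, an absurdity, hence a mixed $\{s_{i,j}\}\cup\{v_{i,0}\}$ rainbow set, whose maximum non-adjacent size I must then bound below $n$ by a short case analysis exactly in the spirit of the $K_7\boxdot P_3$ and $K_9\boxdot C_4$ arguments. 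On the lower-bound side, exhibiting an $(n-1)$-coloring of $K_n\boxdot K_1$ for every even $n>4$ with a genuine $b$-rainbow set requires care in the rotational assignment of colors to the subdivision vertices so that each color class meets the neighborhood of its designated $b$-vertex; I would present this coloring in closed form (cases on $h$ even/odd as in Theorem \ref{theorem_KnPt}) and verify the $b$-vertex condition, illustrating it with a figure for, say, $K_6\boxdot K_1$.
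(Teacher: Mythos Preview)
Your overall organization matches the paper's, and most of the sub-cases are routed correctly (the $t=3$ case via Theorem \ref{theorem_KnCt}, the small pairs via figures and earlier lemmas, the $t=2$ and $t=3$ constructions via recycling the $K_n\boxdot P_3$ coloring). However, the heart of the theorem is the upper bound $\varphi(K_n\boxdot K_1)\le n-1$ for even $n>4$, and here your plan breaks down.

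First, Lemma \ref{lemma_technical} does not do what you want. Its second conclusion, $\ell=2t=n-1$, only fires when $n\le 2t+1$; with $t=1$ that means $n\le 3$, so for $n\ge 5$ the lemma merely says that any cycle of $s$-vertices in a $b$-rainbow set has even length. But in fact, for $n\ge 6$ every subdivision vertex $s_{i,j}$ has degree $4<n-1$ by (\ref{eq_d}), so no $s_{i,j}$ can be a $b$-vertex of an $n$-coloring at all: the lemma is vacuous here. Second, your fallback plan of bounding the ``maximum non-adjacent size'' of $\{s_{i,j}\}\cup\{v_{i,0}\}$ below $n$ cannot succeed, because the $n$ vertices $v_{0,0},\ldots,v_{n-1,0}$ (or equally $u_0,\ldots,u_{n-1}$) are already a pairwise non-adjacent set of size $n$; each $u_i$ and $v_{i,0}$ are twins with the common neighbourhood $\{s_{i,j}:j\ne i\}$.

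The paper's argument is different in kind: since any $b$-rainbow set of size $n$ must pick one vertex from each twin pair $\{u_i,v_{i,0}\}$, the $b$-vertex condition forces, for every $i$, that the $n-1$ colours other than $c(u_i)$ each appear exactly once among $\{c(s_{i,j}):j\ne i\}$. Reading $c(s_{i,j})$ as a colour on the edge $u_iu_j$ of $K_n$, this says every colour $k$ is missing at exactly one vertex of $K_n$ and present exactly once at the remaining $n-1$ vertices, hence appears on $(n-1)/2$ edges. That is impossible when $n$ is even because $\binom{n}{2}$ is not a multiple of $n$. This parity/counting obstruction is the key idea you are missing; without it the upper bound for even $n>4$ is unproved. (A minor side remark: two of the three bounds you list as ``delicate'' --- $\varphi(K_n\boxdot K_1)\le n$ for odd $n$ and $\varphi(K_n\boxdot K_2)\le n+1$ for $n\ge 6$ --- are just the $m$-degree bound and require no extra work; the genuinely sub-$m$ cases, besides even $n$ with $t=1$, are the sporadic $(4,1)$ and $(5,2)$, where Lemma \ref{lemma_technical} does apply.)
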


\begin{proof} The case $t=3$ holds from Theorem \ref{theorem_KnCt}. In addition, since $K_2\boxdot K_1$ coincides with $S_4$, the case $(n,t)=(2,1)$ follows from Proposition \ref{proposition_Kouider}. Moreover, even if  $m(K_4\boxdot K_1)=5$ and $m(K_5\boxdot K_2)=7$, it follows readily from Lemma \ref{lemma_technical} that $\varphi(K_4\boxdot K_1)\leq 4$ and $\varphi(K_5\boxdot K_2)\leq 5$. The second and fifth graphs in Figure \ref{Fig_K2K1} illustrate that both upper bounds are reached.

\begin{figure}[ht]
\begin{center}
\includegraphics[scale=0.06]{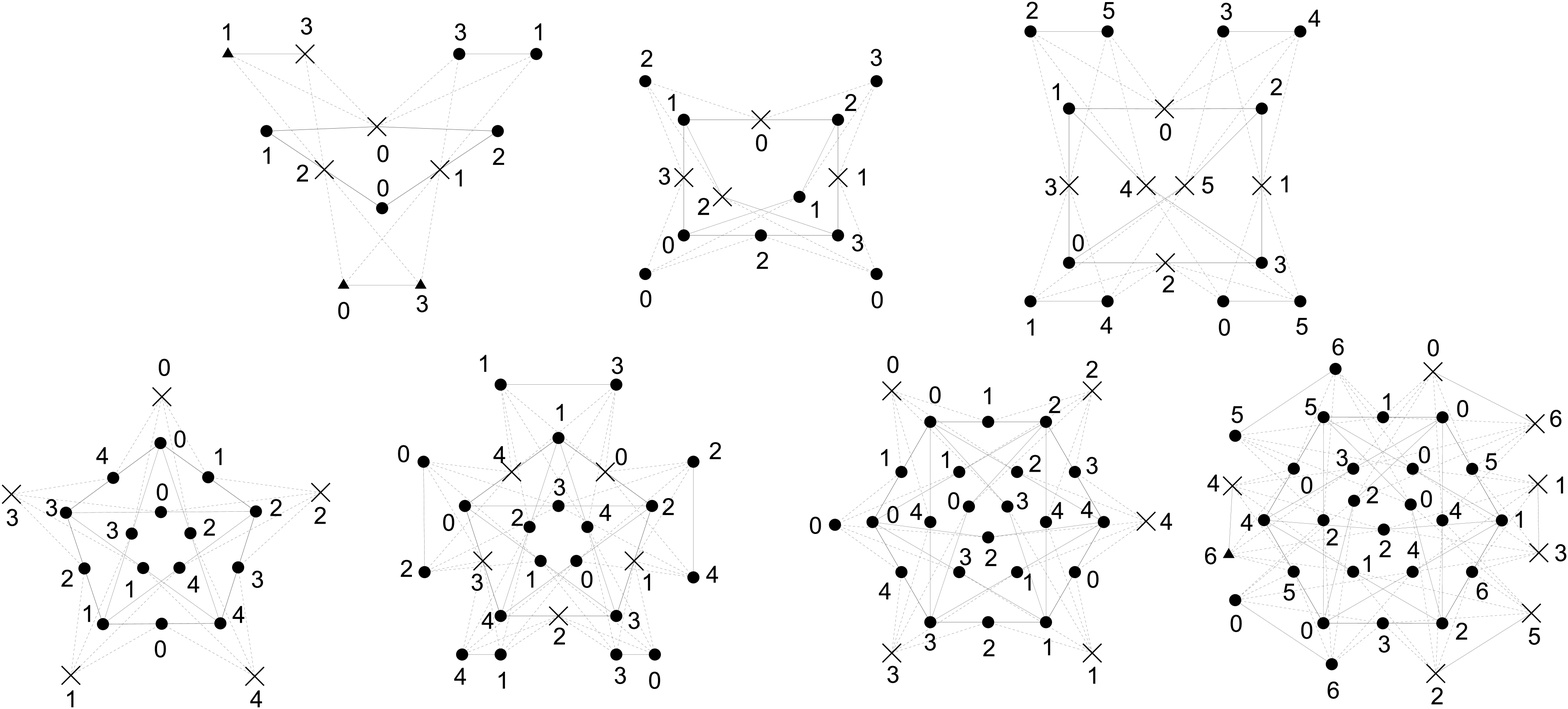}
\caption{Optimal $b$-chromatic coloring of $K_n\boxdot K_t$, for all $(n,t)\in\{(3,2),\,(4,1),\,(4,2),\,(5,1),\,(5,2),\,(6,1),\,(6,2)\}$.}
\label{Fig_K2K1}
\end{center}
\end{figure}

Further, even if $m(K_n\boxdot K_1)=n$, for all $n$, the only $b$-rainbow sets of a $b$-chromatic coloring $c$ of $K_n\boxdot K_1$, with an even number $n>4$ of colors, would be those ones containing one vertex of each one of the $n$ sets $\{u_i,\,v_{i,0}\}$, with $0\leq i<n$. But then, all the $n$ colors should appear the same number of times in the multiset $\{c(s_{i,j})\colon\, 0\leq i,j<n\}$, which is not possible because $\binom {n}2$ is not a multiple of the even integer $n$. Hence, $\varphi(K_n\boxdot K_1)\leq n-1$, for every even integer $n>4$. This upper bound is reached, because of the $b$-chromatic proper coloring $c$ such that, for every pair of non-negative integers $i,j<n$, with $i<j$, we have that $c(u_i)=c(v_{0,i})=2i\,\mathrm{mod}\,(n-1)$  and $c(s_{i,j})=(i+j)\,\mathrm{mod}\,(n-1)$, except for $c(s_{0,n-1})=1$. Here,  $V(K_t)=\{v_0,\ldots,v_{t-1}\}$.  Then, a $b$-rainbow set is formed by the vertices $v_{0,0},\,\ldots,v_{n-1,0}$. (The sixth graph in Figure \ref{Fig_K2K1} illustrates the case $K_6\boxdot K_1$.)

From Lemma \ref{lemma1} and (\ref{eq_d}), all the remaining values are upper bounds of the $b$-chromatic number. Then, the case $n=t=2$ follows from Proposition \ref{proposition_Kouider} and Lemma \ref{lemma_Kn_n2}. Moreover, the case $(n,t)\in\{(3,2),\,(4,2),\,(6,2)\}$ are illustrated by the first, third and seventh graphs in Figure \ref{Fig_K2K1}. In order to see that the remaining upper bounds are reached, we define an appropriate $b$-chromatic coloring $c$ of the graph $K_n\boxdot K_t$. The same $b$-chromatic coloring defined for the graph $K_6\boxdot K_1$ in Figure \ref{Fig_K2K1}, together with $c(v_{i,1})=5$, for every non-negative integer $i<6$, constitutes a $b$-chromatic coloring for the graph $K_6\boxdot K_2$. In addition, if $n\geq 7$ and $t=2$, then it is enough to define $c$ as the restriction to $K_n\boxdot K_2$ of the $b$-chromatic coloring  for $K_n\boxdot K_3$ that was described in the proof of Theorem \ref{theorem_KnPt}. Finally, if $t=1$ and $n$ is odd, then, for each triple of positive integers $i,j<n$ and $k<t$, we define $c(u_i)=c(v_{i,0})=2i\,\mathrm{mod}\,n$ and $c(s_{i,j})=(i+j)\,\mathrm{mod}\,n$. A $b$-rainbow set is formed by the vertices $v_{0,0},\,\ldots,v_{n-1,0}$. (The fourth graph in Figure \ref{Fig_K2K1} illustrates the case $n=5$.)
\end{proof}

\vspace{0.2cm}

The same $b$-chromatic coloring described in the proof of Theorem \ref{theorem_KnKt} for the graph $K_n\boxdot K_1$, with $n$ odd, gives rise to a $b$-chromatic coloring for the subdivision graph $S(K_n)$. Since $m(S(K_n))=n$, we have from Lemma \ref{lemma1} that $\varphi(S(K_n))=n$, whenever $n$ is odd. Note here that Vijayalakshmi \cite[Theorem 3.1] {Vijayalakshmi2014} (see also \cite[Theorem 2.3]{Jeeva2017}) indicated without proof that the central graph of the complete graph $K_n$, with $n>3$, has $b$-chromatic number equal to $n-1$. Since this central graph is isomorphic to the subdivision graph $S(K_n)$, their claim is false.

\section{Conclusion and further work}\label{sec:conclusions}

As a first approach to deal with the $b$-chromatic coloring of SVN coronas, we have determine the $b$-chromatic number of the SVN coronas $G\boxdot H$ and $G\boxdot K_n$, with each $G$ and $H$ being either a path, or a cycle, or a star, and $K_n$ being the complete graph of order $n$. The case $K_n\boxdot G$ has also been solved in those cases in which $m(K_n\boxdot G)\leq n+2$. A significant number of technical results based on study of cases, together with their constructive proofs and illustrative examples, have been described to this end. As such, this paper may be considered as a starting point to delve into this topic.

The SVN corona $K_n\boxdot G$, where $m(K_n\boxdot G)> n+2$, seems to require a more extensive study of cases. We leave this for future work. Furthermore, the natural continuation of this paper is the study of the $b$-chromatic number of SVN coronas of other families of graphs, together with a similar approach to deal with the $b$-chromatic coloring of the so-called {\em subdivision-edge neighbourhood corona of graphs} \cite{Liu2013}.

\section*{Acknowledgements} Falc\'on's work is partially supported by the research project FQM-016 from Junta de Andaluc\'\i a.

\section*{Conflict of Interest}

The authors declare no conflict of interest.

\end{document}